\newcommand{\ga}{\alpha}
\newcommand{\gb}{\beta}
\newcommand{\gga}{\gamma}
\newcommand{\gd}{\delta}
\newcommand{\gep}{\epsilon}
\newcommand{\gth}{\theta}
\newcommand{\gl}{\lambda}
\newcommand{\gm}{\mu}
\newcommand{\gn}{\nu}
\newcommand{\gx}{\xi}
\newcommand{\gp}{\pi}
\newcommand{\gt}{\tau}
\newcommand{\gf}{\phi}
\newcommand{\Gd}{\Delta}
\newcommand{\Gs}{\Sigma}
\newcommand{\subs}{\subset}
\newcommand{\sups}{\supset}
\newcommand{\bs}{\backslash}
\newcommand{\nin}{\notin}
\newcommand{\ol}{\overline}
\newcommand{\equ}[1]{%
\begin{equation*}
#1
\end{equation*}
}
\newcommand{\equa}[1]{%
\begin{equation*}
\begin{aligned}
#1
\end{aligned}
\end{equation*}
}
\newcommand{\mbb}{\mathbb}
\newcommand{\mcl}{\mathcal}
\newcommand{\ul}{\underline}
\newcommand{\us}{\underset}
\newcommand{\os}{\overset}
\newcommand{\lra}{\longrightarrow}
\newcommand{\Z}{\mbb Z}
\newcommand{\Ra}{\Rightarrow}
\newcommand{\es}{\emptyset}
\theoremstyle{plain}
\newtheorem{theorem}[equation]{Theorem}
\newtheorem*{theorem*}{Theorem}
\newtheorem*{conjecture*}{Conjecture}
\newtheorem*{prop*}{Proposition}
\newtheorem{lemma}[equation]{Lemma}
\newtheorem*{lemma*}{Lemma}
\newtheorem*{cor*}{Corollary}
\newtheorem{claim}[equation]{Claim}
\newtheorem*{fq*}{Main question}
\newtheorem{obs}[equation]{Observation}
\theoremstyle{definition}
\newtheorem{defn}[equation]{Definition}
\newtheorem*{defn*}{Definition}
\newtheorem*{remark*}{Remark}
\newtheorem{example}[equation]{Example}
\newtheorem*{notation*}{Notation}
\title[Approximation of $4$-Sets]{Approximation of Quadrilaterals by Rational Quadrilaterals in the Plane}
\author{C.P. ANIL KUMAR}
\address{Indian Statistical Institute , 8th Mile Mysore Road,
RVCE Post, Bangalore, Karnataka 560059, India}
\keywords{Rational Triangles and Quadrilaterals, Rational
Approximability of polygons, Rational Points on quartic Curves,
Elliptic Curves, Torsion Points, Rational Points on Varieties and
their Density}
\subjclass[2010]{Mathematics Subject Classifications: 14G05, 11J17}
\begin{document}

\maketitle

\begin{abstract}
Many questions about triangles and quadrilaterals with rational
sides, diagonals and areas can be reduced to solving certain
Diophantine equations. We look at a number of such questions
including the question of approximating arbitrary triangles and
quadrilaterals by those with rational sides, diagonals and areas. We
transform these problems into questions on the existence of
infinitely many rational solutions on a two parameter family of
quartic curves. This is further transformed to a two parameter
family of elliptic curves to deduce our main result concerning
density of points on a line which are at a rational distance from
three collinear points (Theorem~\ref{theorem:DensityLineAOC}). We
deduce from this a new proof of density of rational quadrilaterals
in the space of all quadrilaterals
(Theorem~\ref{th:RationalQuadrilateral}). The other main result
(Theorem~\ref{Th:RationalTriangle}) of this article is on the
density of rational triangles which is related to analyzing rational
points on the unit circle. Interestingly, this enables us to deduce
that parallelograms with rational sides and area are dense in the
class of all parallelograms.

We also give a criterion for density of certain sets in topological
spaces using local product structure and prove the density
Theorem~\ref{Theorem:DensityFibrations} in the appendix section. An
application of this proves the density of rational points as stated
in Theorem~\ref{theorem:RationalDensity}.
\end{abstract}

\section{Introduction}
Throughout this article, we call a polygon $\mathit{rational}$ if
its sides, diagonals and area are all rational numbers. Interest in
the theory of rational triangles goes back to the time of Leonhard
Euler. Euler found formulae expressing proportions of the sides of a
rational right-angled triangle and a general rational triangle. For
the latter, he proved:

\begin{theorem}[Euler~\cite{Euler2}]
The sides of a general rational triangle $\Gd ABC$ with sides
$AB=c,BC=a,AC=b$ with rational area satisfy the proportion
\begin{equation*}
a:b:c = \frac{r^2+s^2}{rs}:\frac{(ps\pm rq)(pr \mp
qs)}{pqrs}:\frac{p^2+q^2}{pq}
\end{equation*}
for some integers $p > q,r \geq s$.
\end{theorem}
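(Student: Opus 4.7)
The plan is to reduce the problem to the classical Pythagorean parametrization of rational points on the unit circle. Given a triangle $ABC$ with rational sides $a = BC$, $b = AC$, $c = AB$ and rational area, I would drop the altitude from vertex $B$ to the line through $AC$; its length $h = 2(\mathrm{Area})/b$ is automatically rational. Letting $F$ be the foot of this altitude, the Pythagorean theorem yields $AF^2 = c^2 - h^2$ and $FC^2 = a^2 - h^2$, while $AF \pm FC = b$ (the sign depending on whether $F$ lies inside segment $AC$ or outside it). Since $AF^2 - FC^2 = c^2 - a^2$ is rational and $AF \pm FC = b$ is rational, both $AF$ and $FC$ are themselves rational.

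Thus the triangle splits into two right sub-triangles $\Gd ABF$ and $\Gd CBF$, each with rational legs and a rational hypotenuse. Any rational right triangle, after clearing denominators, corresponds to a Pythagorean triple of the form $x : y : z = (p^2 - q^2) : 2pq : (p^2 + q^2)$ with positive integers $p > q$, so I would apply this parametrization with parameters $(p, q)$ to $\Gd ABF$, obtaining
\[
AF : h : c = (p^2 - q^2) : 2pq : (p^2 + q^2),
\]
hence $c = h(p^2+q^2)/(2pq)$ and $AF = h(p^2-q^2)/(2pq)$. Applying it similarly with parameters $(r, s)$, $r \geq s$, to $\Gd CBF$ gives $a = h(r^2+s^2)/(2rs)$ and $FC = h(r^2-s^2)/(2rs)$.

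Combining these expressions with $b = AF \pm FC$ and invoking the algebraic identity
\[
rs(p^2 - q^2) \pm pq(r^2 - s^2) = (ps \pm rq)(pr \mp qs)
\]
yields $b = h(ps \pm rq)(pr \mp qs)/(2pqrs)$. Forming the ratio $a : b : c$ and canceling the common factor $h/2$ then produces the stated proportion. The main point requiring care is the $\pm / \mp$ sign bookkeeping: the upper signs correspond to both base angles at $A$ and $C$ being acute so that $F$ lies inside $AC$, while the lower signs correspond to one of these angles being obtuse with $F$ outside $AC$. The allowance $r \geq s$ rather than strict inequality accommodates the degenerate case $s = r$ corresponding to a right angle at $C$, and the existence of integer parameters with the stated inequalities is obtained by clearing denominators in the rational half-angle tangents $q/p = \tan(\angle A /2)$ and $s/r = \tan(\angle C /2)$.
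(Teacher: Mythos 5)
The paper does not actually prove this theorem; it is quoted as a classical result of Euler and only a citation is given, so there is no in-text proof to compare against. Judged on its own merits, your argument is correct and is the standard elementary derivation: the rationality of the altitude $h = 2\,\mathrm{Area}/b$, together with the rationality of $AF^2 - FC^2 = c^2 - a^2$ and of $AF \pm FC = b$, forces both $AF$ and $FC$ to be rational, splitting the triangle into two rational right triangles whose legs and hypotenuses are then parametrized by Pythagorean pairs $(p,q)$ and $(r,s)$. One small point worth making explicit: the Pythagorean parametrization of a rational right triangle determines the two legs only as an unordered pair, so you should state that you are free to choose the labelling so that the common altitude $h$ sits in the ``$2pq$'' slot in both sub-triangles (one can always do this, for instance by replacing $(p,q)$ by $(p+q,\,p-q)$, which swaps the roles of the two legs up to a scalar); otherwise the common factor of $h$ will not cancel cleanly across the three ratios. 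The sign bookkeeping and the algebraic identity $rs(p^2-q^2)\pm pq(r^2-s^2) = (ps\pm rq)(pr\mp qs)$ are handled correctly, and the observation that $r=s$ corresponds to a right angle at $C$ (so $F=C$) justifies the non-strict inequality $r\geq s$. So the proof is sound, and it is almost certainly the same route Euler and later authors took, but the paper itself offers no proof to compare it with.
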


H.F. Blichfeldt ~\cite{Blichfeldt} and D.N. Lehmer~\cite{Lehmer}
have independently derived formulae for the sides of a rational
triangle. Lehmer also characterized integral triangles
i.e. triangles with integer sides and area in the plane in
~\cite{Lehmer}.

E.E. Kummer~\cite{Kummer} obtained a characterization of rational
quadrilaterals in the plane. He reduced the problem of finding
rational quadrilaterals to the problem of finding rational solutions
to the equation.
\begin{equation}
\label{eq:symmetric} \frac{(\gx+c)^2-1}{2\gx}.\frac{(x-c)^2-1}{2x} =
\frac{(\gn-c)^2-1}{2\gn}.\frac{(y+c)^2-1}{2y}
\end{equation}
in rationals $\gx,\gn,x,y,c$ with $|c| < 1$.

L.E. Dickson~\cite{Dickson} also derived expressions for rational
quadrilaterals similar to Kummer's. In the conclusion of his
paper~\cite{Dickson} he mentions that some questions about triangles
and quadrilaterals reduce to deciding whether certain quartic
functions can be written in terms of rational squares.

In what follows, we say that a polygon of $n$ sides is rationally
approximable if there are rational polygons of $n$ sides whose
vertices are arbitrarily close to the given one. I.J. Shoenberg
posed the general question:
\begin{center}
$\mathit{Is\ every\ n-sided\ polygon\ rationally\ approximable?}$
\end{center}

A.S. Besicovitch~\cite{Besicovitch} answered this question for the
special cases of right-angled triangles and parallelograms.

D.E. Daykin ~\cite{Daykin} answered Shoenberg's question
affirmatively in the class of quadrilaterals, parallelograms, and
some classes of hexagons. We can deduce a new proof of rational
approximability for quadrilaterals as a consequence of our
Theorem~\ref{theorem:DensityLineAOC}.

Open questions regarding integral and rational distances and
rational approximation have attracted many other mathematicians such
as John Isbell, John Leech, Harborth, Kemnitz, Richard Guy, N.H.
Anning, Paul Erd{\"o}s, J.H.J Almering,  T.K. Sheng and T.G. Berry.
(See the list of references).

\vskip 3mm

\subsection{The Two Main Results}

\noindent In this paper, we prove two results. To this end we need
some definitions. Let us call a set $\{A,B,C\}$ of three points in
the plane a rational $3$-set if the lengths $AB,AC,BC$ are rational.
Call a set $\{A,B,C,D \}$ of four points on the plane a rational
$4$-set, if all the six distances are rational.  If $K \subs
\mbb{R}^2$ is a compact/finite set and $L \subs \mbb{R}^2$ is a
closed set then we define the distance $d(K,L)=min\{d(x,y) \in
\mbb{R} \mid x \in K,y \in L\}$. We also define for any two finite
$n-$sets $K_1=\{A_1,A_2,\ldots,A_n\},
K_2=\{A_1',A_2',A_3',\ldots,A_n'\} \subs \mbb{R}^2$, equipped with a
bijection between $K_1,K_2$, given by $A_i \mapsto A_i'$, the
distance $D(K_1,K_2) = max\{d(A_i,A_i') \in \mbb{R} \mid 1 \leq i
\leq n\}$. As such this definition depends on the bijection $A_i \mapsto A_i'$. 
However see section~\ref{sec:Definitions} definition~\ref{Definition:Bijection}
as we require the distance $D(K_1,K_2)$ when $d(A_i,A_i')$ is very small for 
$1 \leq i \leq n$. Now we are ready to state our two results.

The first one is on the density of rational triangles in the space
of triangles in the plane.
\begin{theorem}
\label{Th:RationalTriangle} Let $X=\{A,B,C\}$ represent three
vertices of a triangle $\Gd ABC$ in the Euclidean plane. Then given
$\gep > 0$ there exists a rational $3$-set $X_{\gep}$ in the plane
such that the points in $X_{\gep}$ form a rational triangle with
rational area and $D(X,X_{\gep}) < \gep$.

More precisely if $BC$ is the largest side of the triangle $\Gd ABC$
then we can choose the $3$-set $X_{\gep}=\{A,B',C'\}$ to also
contain the vertex $A$ opposite to the side $BC$ and make the side
$BC \parallel B'C'$.
\end{theorem}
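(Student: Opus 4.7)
The plan is to construct $B',C'$ explicitly by working in coordinates adapted to the given triangle. Place the plane so that $A$ sits at the origin and the line through $B$ and $C$ is horizontal; then $B=(b,h)$ and $C=(c,h)$, where $h>0$ is the distance from $A$ to the line $BC$ and one may assume $b<c$. I would produce rationals $h',x_1,x_2$ with $h'$ close to $h$, $x_1$ close to $b$ and $x_2$ close to $c$ such that $\sqrt{x_1^2+h'^2}$ and $\sqrt{x_2^2+h'^2}$ are both rational, and then set $B'=(x_1,h')$, $C'=(x_2,h')$. Once such $h',x_1,x_2\in\mbb{Q}$ are in hand, the conditions $B'C'\pl BC$, $|B'C'|=|x_2-x_1|\in\mbb{Q}$ and $\mrm{Area}(\Gd AB'C')=\tfrac{1}{2}h'|x_2-x_1|\in\mbb{Q}$ all come for free, so the whole problem collapses to rationalising the two distances from $A$.

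The main ingredient is the following one-dimensional density statement, which is the ``rational points on the unit circle'' input alluded to in the abstract. For every $h'\in\mbb{Q}^{+}$, the set
\equ{S_{h'}\;:=\;\{x\in\mbb{Q}\ :\ x^2+h'^2 \text{ is the square of a rational}\}}
is dense in $\mbb{R}$. I would prove this using the classical parametrisation
\equ{x\;=\;\frac{h'^2-u^2}{2u},\qquad \sqrt{x^2+h'^2}\;=\;\frac{h'^2+u^2}{2u},\qquad u\in\mbb{Q}^{+}.}
The map $u\mapsto(h'^2-u^2)/(2u)$ is a continuous, strictly decreasing surjection from $\mbb{R}^{+}$ onto $\mbb{R}$, so density of $\mbb{Q}^{+}$ in $\mbb{R}^{+}$ transports at once to density of $S_{h'}$ in $\mbb{R}$.

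Granted this lemma, the proof becomes routine. First pick $h'\in\mbb{Q}^{+}$ with $|h-h'|<\gep/\sqrt{2}$; then apply the lemma twice to choose $x_1,x_2\in S_{h'}$ with $|x_1-b|<\gep/\sqrt{2}$ and $|x_2-c|<\gep/\sqrt{2}$. Setting $X_{\gep}:=\{A,(x_1,h'),(x_2,h')\}$, all four length/area rationality conditions hold by construction, while the estimate $|B-B'|^2=(b-x_1)^2+(h-h')^2<\gep^2$ and its analogue for $C$ give $D(X,X_{\gep})<\gep$; $A$ is unchanged and $B'C'$ is horizontal, hence parallel to $BC$.

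I do not expect any serious obstacle: the only step with real content is the density lemma, and it is a short exercise with the explicit rational parametrisation. The hypothesis that $BC$ is the longest side plays no essential role in the argument itself, and only serves as a convention fixing which two vertices are perturbed while $A$ is held fixed.
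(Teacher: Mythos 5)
Your proposal is correct and takes essentially the same approach as the paper: fix the vertex $A$, rationalize the altitude length, and adjust the two base vertices along a line parallel to $BC$ using the density of rational Pythagorean data. Your explicit parametrisation of $S_{h'}$ is a coordinate-level restatement of the paper's Lemma~\ref{lemma:Density} on density of angles with rational sine and cosine (the paper chooses rational $AD'$ and then angles $\ga_1,\gb_1$ with rational $\sin,\cos$, which is exactly your pair $(h',x_i)$ with $x_i^2+h'^2$ a rational square), so the two arguments are the same in content.
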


Using this main result on rational approximability of triangles we
also deduce the analogous result for parallelograms.

The second main result addresses the density of points on a line
which are at a rational distance from three collinear points.

\begin{theorem}
\label{theorem:DensityLineAOC} Let $A,O,C$ be three distinct,
collinear points in the plane with point $O$ on the line segment
$AC$ such that the $3$-set $\{A,O,C\}$ is a rational set. Let $L$ be
a line passing through $O$ such that the sine and cosine of the
angle between $L$ and $AC$ are rational. There exist finite sets
$F_{\measuredangle} \subs [0,\gp]$ and $F_{ratio} \subs \mbb{R}$
such that
\begin{itemize}
\item For a fixed angle between $L$ and $AC$ not in the finite set
$F_{\measuredangle}$, the set of all points $B$ on $L$ such that the
four set $\{A,B,C,O\}$ is a rational set is dense, except for a
finitely many values of the ratio $\frac{AO}{OC}$.
\item For a fixed ratio $\frac{AO}{OC}$ not in the
finite set $F_{ratio}$, the set of all points $B$ on $L$ such that
the four set $\{A,B,C,O\}$ is a rational set is dense, except for a
finitely many choices of the angles between $L$ and $AC$.
\end{itemize}
\end{theorem}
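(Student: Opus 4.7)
The plan is to recast the rationality conditions on the $4$-set as a simultaneous-squares problem in one rational variable, reduce this to producing rational points on a two-parameter family of elliptic curves, and use the group law to manufacture a dense set of rational solutions whenever a distinguished base point has infinite order. Place $O$ at the origin with $A=(-a,0)$, $C=(c,0)$, where $a=AO$ and $c=OC$ are positive rationals, and write $\alpha=\cos\theta$, $\beta=\sin\theta\in\mathbb{Q}$. Parametrize $L$ by $B_t=(t\alpha,t\beta)$, $t\in\mathbb{R}$; since $OB_t=|t|$, rationality of $OB$ forces $t\in\mathbb{Q}$, while rationality of $AB_t$ and $CB_t$ becomes the requirement that the quadratics
\[
f(t)=t^2+2a\alpha\,t+a^2,\qquad g(t)=t^2-2c\alpha\,t+c^2
\]
be simultaneous rational squares. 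Both are globally non-negative (their discriminants are $4a^{2}(\alpha^{2}-1)\le 0$ and $4c^{2}(\alpha^{2}-1)\le 0$), and the affine curve $\mathcal{C}_{a,c,\alpha}\colon u^2=f(t),\ v^2=g(t)$ has arithmetic genus one and carries the obvious rational point $P_0=(0,a,c)$.

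The quartic model $y^2=f(t)g(t)$ is $\mathbb{Q}$-birational to $\mathcal{C}_{a,c,\alpha}$, and the classical transformation based at $P_0$ converts it into a Weierstrass elliptic curve $E_{a,c,\alpha}/\mathbb{Q}$, depending on the two essentially rational parameters $r=a/c$ and $\alpha$, together with a distinguished rational point $\widetilde P_0\in E_{a,c,\alpha}(\mathbb{Q})$. To produce infinitely many rational configurations it suffices to show that $\widetilde P_0$ has infinite order. For each $n$, the condition $n\widetilde P_0=O$ is algebraic in $(r,\alpha)$, and by Mazur's classification of torsion over $\mathbb{Q}$ only $n\in\{1,\ldots,10,12\}$ need be considered, giving a finite union of curves $V=\bigcup_n V_n$ in parameter space. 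Define $F_\measuredangle$ to be the (necessarily finite) set of $\alpha\in[-1,1]\cap\mathbb{Q}$ for which some $V_n$, or the discriminant locus of $E_{a,c,\alpha}$, contains the entire slice $\{\alpha=\mathrm{const.}\}$, and $F_{\mathrm{ratio}}$ analogously. For $\alpha\notin F_\measuredangle$, the slice meets $V$ in only finitely many rationals $r$, which are precisely the excluded ratios of the first bullet; the symmetric argument handles the second.

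Density of the surviving rational $t$-values on $\mathbb{R}$ is then standard: the real locus of $E_{a,c,\alpha}$ is a compact Lie group with one or two components, a rational point of infinite order on the identity component has multiples equidistributed on $\mathbb{R}/\mathbb{Z}$, and since $f,g\ge 0$ everywhere on $\mathbb{R}$ the projection from the real quartic to the $t$-line is surjective, so admissible $t$-values are dense in $\mathbb{R}$ and the corresponding $B_t$'s are dense on $L$. The main technical obstacle is to write $\widetilde P_0$ explicitly in terms of $(r,\alpha)$, compute enough division polynomials on $E_{a,c,\alpha}$, and verify that each $V_n$ is \emph{not} identically slice-filling except along the finite exceptional sets $F_\measuredangle$ and $F_{\mathrm{ratio}}$; this also requires carefully isolating degenerate specializations such as $\alpha=\pm 1$ (where $L$ coincides with $AC$) or $a=c$ (where added symmetry can inflate torsion), which must themselves be placed into the exceptional sets.
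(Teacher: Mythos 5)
Your overall strategy parallels the paper's — reduce the rationality conditions to rational points on a two-parameter family of elliptic curves, use Mazur's theorem to uniformly bound torsion and thereby cut out finite exceptional sets, and get density from the multiples of a non-torsion point — but the specific reduction contains a genuine error. You assert that the quartic model $y^2=f(t)g(t)$ is $\mathbb{Q}$-birational to $\mathcal{C}_{a,c,\alpha}\colon u^2=f(t),\ v^2=g(t)$. It is not: the map $(t,u,v)\mapsto(t,uv)$ is a degree-$2$ covering with deck involution $(u,v)\mapsto(-u,-v)$, so $\mathcal{C}_{a,c,\alpha}$ is an unramified double cover of the quartic, not isomorphic to it. A rational point on $y^2=f(t)g(t)$ only guarantees that the \emph{product} $f(t)g(t)$ is a square; it allows $f(t)$ and $g(t)$ to both be non-squares, in which case $AB$ and $CB$ are irrational. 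To make the argument go through you would have to work directly with $\mathcal{C}_{a,c,\alpha}$ (a smooth intersection of two quadrics, genus one), pass to its Jacobian, and verify non-torsion of the image of $P_0$ there — none of which is done in the proposal, which silently replaces $\mathcal{C}_{a,c,\alpha}$ by its quotient.

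The paper sidesteps this issue with a cleverer parametrization: rather than using $t$ (the coordinate of $B$ on $L$), it parametrizes the pencil of lines through $C$ by a slope $m_C$ and then substitutes $m_C=2\tan\gamma/(1-\tan^2\gamma)$. The half-angle identity makes $1+m_C^2$ \emph{automatically} a rational square whenever $\tan\gamma\in\mathbb{Q}$, leaving only one remaining square condition (from the distance to $A$); that single condition really is of the form $y^2=Q(x)$ with $Q$ quartic in $x=\tan\gamma$, so the classical quartic-to-Weierstrass transformation applies without the birationality fallacy. Two secondary gaps in your outline are worth naming: you appeal to equidistribution of multiples ``on the identity component'' without checking which real component the base point lies on (the paper verifies $P_1$ lies on the oval, which is what gives density in all of $S^1\times\mathbb{Z}/2\mathbb{Z}$ and hence surjectivity of the projection to the $t$-line), and the crucial assertion that the base point is non-torsion as a polynomial point — which underwrites the finiteness of $F_{\measuredangle}$ and $F_{\mathrm{ratio}}$ — is deferred to ``compute enough division polynomials,'' whereas the paper actually specializes at $(m,n)=(1,2)$, computes the first eleven multiples, and lifts to the generic point via Mazur.
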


For the proof we use quartic curves as well as a family of cubic
curves. From this Theorem~\ref{theorem:DensityLineAOC}, we deduce
rational approximability of general quadrilaterals.

In $1960$  L.J. Mordell~\cite{Mordell} proved that every
quadrilateral in the plane is approximable by quadrilaterals with
rational sides and diagonals (with no condition on the area). He used
Nagell's theorem on integral points and torsion points on cubic
curves.

\subsection{The Density Result}
We also prove the following useful topological density result and as
a consequence we prove density of rational points as stated in
Theorem~\ref{theorem:RationalDensity}.

For this purpose we introduce a definition
\begin{defn}[Local Product Structure]
Let $X,Y$ be topological spaces and $f:X \lra Y$ be a surjective
continuous map. Let $x_0 \in X$ and $f(x_0)=y_0$. Let $F_{x_0}$ be
any topological space. Suppose there exists open sets $O \subs X,U
\subs F_{x_0},V \subs Y$ such that $x_0 \in O,y_0 \in V$ and $O
\cong_{\psi} U \times V$ and such that the following diagram
commutes.

\equ{\big(O \us{f}{\lra} V\big) = \big(O \us{\psi}{\lra} U \times V
\us{\gp_2}{\lra} V\big).}

Then we say that $X$ has the local product structure property at the
point $x_0$ with respect to $f,F_{x_0}$.
\end{defn}

Now we state the theorem.

\begin{theorem}
\label{Theorem:DensityFibrations} Let $X_0 \us{f_1}{\lra} X_1
\us{f_2}{\lra} X_2 \us{f_3}{\lra} X_3 \us{f_4}{\lra} \ldots
\us{f_n}{\lra} X_n$ be a sequence of surjective continuous maps of
topological spaces such that the local product structure property is
satisfied on a dense set $Z_i$ of $X_i$ with respect to the map
$f_{i+1}$ for $i=0,\ldots,n-1$. Then if $B \subs X_n$ is dense then
any fibre-wise dense set in the preimage of $B$ in each $X_i$ is
dense in $X_i$ for all $0 \leq i \leq n-1$.
\end{theorem}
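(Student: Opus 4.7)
The plan is to prove the result directly, by iterating the local product structure property $n-i$ times to build, around a chosen point of $X_i$, an open product neighborhood on which the composed map $g := f_n \circ \cdots \circ f_{i+1}$ is globally a projection. Density of $B$ in $X_n$ combined with fibrewise density of $S$ will then produce a point of $S$ inside any prescribed open set of $X_i$.

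More concretely, fix $i$, let $W \subs X_i$ be a nonempty open set, and let $S \subs g^{-1}(B)$ be fibrewise dense over $B$. Using density of $Z_i$, pick $x_i \in Z_i \cap W$ and apply the local product structure at $x_i$ with respect to $f_{i+1}$ to obtain an open set $O_i \subs W$ with a homeomorphism $\gc_i \colon O_i \lra U_i \times V_i$, where $V_i \subs X_{i+1}$ is open, $f_{i+1}(x_i) \in V_i$, and $f_{i+1}|_{O_i}$ corresponds to the projection onto $V_i$. Since $Z_{i+1}$ is dense in $X_{i+1}$, pick $x_{i+1} \in Z_{i+1} \cap V_i$ and repeat, obtaining an open $O_{i+1} \subs V_i$ with $O_{i+1} \cong U_{i+1} \times V_{i+1}$. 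Iterating, one obtains open sets $O_j \subs V_{j-1} \subs X_j$ for $j = i, i+1, \ldots, n-1$ together with a final open $V_{n-1} \subs X_n$.

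A short bookkeeping argument, by induction on $n-i$, shows that these decompositions glue to produce an open subset $P \subs O_i$ with homeomorphism $P \cong U_i \times U_{i+1} \times \cdots \times U_{n-1} \times V_{n-1}$, under which $g|_P$ is identified with projection onto the last factor. At each step one restricts the product decomposition of $O_j$ to the open subset $U_j \times O_{j+1}$ and then substitutes the product decomposition of $O_{j+1}$, using commutativity of the defining diagrams to verify that the composed map remains a projection.

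To finish, since $B$ is dense in $X_n$ and $V_{n-1}$ is open, pick $y \in B \cap V_{n-1}$. Then $g^{-1}(y) \cap P$ corresponds to $U_i \times \cdots \times U_{n-1} \times \{y\}$, a nonempty open subset of the fibre $g^{-1}(y)$. By fibrewise density, $S \cap g^{-1}(y)$ meets this open subset, producing a point of $S$ inside $P \subs O_i \subs W$. The principal obstacle is the notationally heavy verification that iterated compatible product decompositions compose into a single product decomposition under which the composed map remains a projection; this is routine but requires some care with the homeomorphisms $\gc_j$.
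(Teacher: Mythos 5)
Your proof is correct, but it follows a genuinely different route from the paper's. The paper factors the argument into two clean pieces: first, Lemma~\ref{lemma:LocalProductStructureLemma} applied $n-i$ times (Lemma~\ref{lemma:RepeatedPrinciple}) shows that the full preimage $g^{-1}(B)$ of $B$ in $X_i$ is dense; second, it observes that fibrewise density of $S$ forces $\overline{S} \supseteq g^{-1}(b)$ for every $b \in B$, so $\overline{S} \supseteq g^{-1}(B)$, whence $\overline{S} \supseteq \overline{g^{-1}(B)} = X_i$. You instead do a one-shot construction: unroll the iteration to build a single nested chart $P \cong U_i \times \cdots \times U_{n-1} \times V_{n-1}$ on which $g$ is globally a projection, then invoke density of $B$ once (to locate $y \in B \cap V_{n-1}$) and fibrewise density of $S$ once (to get a point of $S$ in the slice $U_i \times \cdots \times U_{n-1} \times \{y\} \subseteq W$). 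Your version avoids ever mentioning the intermediate fact that $g^{-1}(B)$ is dense, at the cost of the ``bookkeeping argument'' needed to stack the local charts; in that step you should note explicitly that at each stage one may shrink $O_{j+1}$ to a basic open product $U_{j+1}' \times V_{j+1}'$ contained in $V_j$ before substituting -- this is what makes the nested decomposition legitimate. The paper's route is more modular and shorter given the lemmas already in place; yours is more self-contained and makes visible exactly where nonemptiness of each factor $U_j$ and the fibrewise density hypothesis enter.
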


We end this introduction with another question which, similar to
Shoenberg's question for $n \geq 5$, is still open~\cite{Berry}.

\begin{itemize}

\item   $\mathit{Does\ there\ exist\ a\ point\ in\ the\ plane\ at\ a\ rational\ distance\ from}$\\
        $\mathit{each\ of\ the\ corners\ of\ a\ unit\ square?}$
\end{itemize}

\section{Definitions}
\label{sec:Definitions}
In this article we use the following definitions.
\begin{defn}
\label{RationalSet} Let $X \subs \mbb{R}^2$ be a subset of the
Euclidean plane. The distance set is defined as $\Gd(X) = \{r \in
\mbb{R}\mid \text{ there exists } p,q \in X \text{ with }
r=d(p,q)\}$. We say that the set $X$ is rational if $\Gd(X) \subs
\mbb{Q}$.
\end{defn}
\begin{defn}
~\\
\vspace{-0.5cm}
\label{Definition:Bijection}
\begin{itemize}
\item A finite $n-$subset $X=\{A_1,A_2,\ldots,A_n\} \subs \mbb{R}^2$ is said to be
rational approximable if given $\gep > 0$ there exists a
rational $n-$subset $X_{\gep}=\{A_1',A_2',\ldots$\\
$,A_n'\} \subs \mbb{R}^2$ with a bijection
$A_i \to A_i'$ such that $max\{d(A_i,A_i')\mid i=1,\ldots,n\}=D(X,X_{\gep})<\gep$.
Here in the case of rational approximability for a given finite set $X$ and $\gep$
small enough depending on $X$, the bijection between $X$ and $X_{\gep}$ is unique
if such a set $X_{\gep}$ exists.
\item A polygon of $n$ sides is said to be rational, if
its sides, diagonals and area are all rational.
\end{itemize}
\end{defn}

\section{Rational Approximability of Triangles and Parallelograms}

In this section, we prove that triangles are rationally
approximable. Towards that, we quote the following
Lemmas~\ref{lemma:RationalPoints}, and ~\ref{lemma:Density}.

\begin{lemma}
\label{lemma:RationalPoints}
~\\
Let $C$ be a circle centered at the origin in $\mbb{R}^2$ whose
radius is rational. Then the set $\mbb{P} = \{(x,y) \in C \mid x,y
\in \mbb{Q}\}$ of points with rational coordinates on the circle is
dense in $C$.
\end{lemma}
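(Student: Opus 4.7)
The plan is to reduce to the case of the unit circle and then use a classical rational parametrization.

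First, I would observe that if the radius $r$ is rational, then scaling by $r$ gives a homeomorphism between the unit circle $C_1$ and $C$ that sends rational points to rational points (since multiplying a rational coordinate by a rational $r$ stays rational, and likewise the inverse scaling by $1/r$ preserves rationality). Consequently it suffices to prove density of rational points on $C_1$, since scaling sends a dense subset of $C_1$ to a dense subset of $C$.

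Next, I would write down the standard stereographic parametrization of $C_1$ from the point $(-1,0)$:
\equ{
\gf : \mbb{R} \lra C_1 \setminus \{(-1,0)\}, \qquad \gf(t) = \left(\frac{1-t^2}{1+t^2},\ \frac{2t}{1+t^2}\right).
}
The map $\gf$ is a continuous bijection (in fact a homeomorphism onto its image), and it visibly sends each $t \in \mbb{Q}$ to a point of $\mbb{P}$. Conversely, from $\gf^{-1}(x,y) = y/(1+x)$ one sees that every rational point on $C_1$ other than $(-1,0)$ arises from a rational $t$; the missing point $(-1,0)$ is itself rational, so it lies in $\mbb{P}$ as well.

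Density then follows formally: given any point $p \in C_1$ and any neighbourhood $U$ of $p$ in $C_1$, either $p = (-1,0) \in \mbb{P}$, or $p = \gf(t_0)$ for some $t_0 \in \mbb{R}$, and by continuity of $\gf$ some interval $(t_0 - \gd, t_0 + \gd)$ maps into $U$. Density of $\mbb{Q}$ in $\mbb{R}$ provides a rational $t$ in this interval, and $\gf(t) \in \mbb{P} \cap U$. Transporting back via the scaling by $r$ gives density of rational points on $C$. There is no real obstacle here; the only point worth being careful about is verifying that the scaling by a rational $r$ really preserves rationality in both directions, which is immediate.
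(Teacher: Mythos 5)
Your proof is correct and is essentially the standard argument the paper has in mind: the paper itself gives no proof, merely calling it ``straightforward'' and citing Humke--Krajewski, and the scaling-plus-stereographic-parametrization you wrote out is exactly that straightforward argument. Nothing to add.
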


The proof is straightforward. See the proof given by Paul D. Humke and Lawrence L.
Krajewski~\cite{HumkeLawrence} for a characterization of circles in
the plane whose rational points are dense in their respective
circles.

The following lemma addresses the density question for angles.

\begin{lemma}
\label{lemma:Density}
~\\
\begin{enumerate}
\item Let $\mbb{Q}_{tan} = \{\gth \in \mbb{R} \mid tan(\gth) \text{ is rational or undefined }\}$.\\
\item Let $\mbb{Q}_{tan2} = \{\gth \in \mbb{R} \mid tan(\gth) = \frac{q}{p},gcd(p,q) = 1,p^2+q^2
\text{ is a square or }\\ tan(\gth) \text{ is undefined }\}$.\\
\item Let $C_{\mbb{Q}} = \{\gth \in \mbb{R} \mid Cos(\gth),Sin(\gth) \text{ are rational }\}$.\\
\end{enumerate}
Then
\begin{enumerate}[label=(\roman*)]
\item Then the set $\mbb{Q}_{tan2}$ is dense in $\mbb{R}$\\
\item $\mbb{Q}_{tan2} \subs \mbb{Q}_{tan}$ and $\mbb{Q}_{tan2} = 2\mbb{Q}_{tan} = C_{\mbb{Q}}$.\\
\item $\mbb{Q}_{tan},\mbb{Q}_{tan2}$ are additive subgroups of $\mbb{R}$
\end{enumerate}
\end{lemma}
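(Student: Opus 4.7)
The strategic core of the proof is to first observe that $\mbb{Q}_{tan2}$ coincides with $C_{\mbb{Q}}$: an angle $\gth$ lies in $C_{\mbb{Q}}$ exactly when $(\cos\gth,\sin\gth)$ is a rational point on the unit circle, and such rational points are exactly of the form $(\pm p/r,\pm q/r)$ with $\gcd(p,q)=1$ and $p^2+q^2=r^2$. Granting this identification, (i) will follow immediately from Lemma~\ref{lemma:RationalPoints} applied to the unit circle: rational points are dense on that circle, and composing with the continuous surjection $\gth\mapsto(\cos\gth,\sin\gth)$ from $\mbb{R}$ onto it pushes density back to $\mbb{R}$.

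\textbf{Proving (ii).} I would verify the three asserted relations in turn. The containment $\mbb{Q}_{tan2}\sbq\mbb{Q}_{tan}$ is essentially by definition. For $\mbb{Q}_{tan2}=C_{\mbb{Q}}$, both directions are handled: given $\gth\in C_{\mbb{Q}}$, write $\cos\gth=a/c$, $\sin\gth=b/c$ in jointly reduced form, so that $a^2+b^2=c^2$; the key observation is that any prime dividing both $a$ and $b$ must divide $c$, so in fact $\gcd(a,b)=1$, and $\tan\gth=b/a$ already exhibits the required Pythagorean form. Conversely, from $\tan\gth=q/p$ with $p^2+q^2=r^2$ one recovers $(\cos\gth,\sin\gth)=\pm(p/r,q/r)\in\mbb{Q}^2$. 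For $\mbb{Q}_{tan2}=2\mbb{Q}_{tan}$, I would invoke the Weierstrass half-angle substitution: setting $t=\tan(\gth/2)$ gives
\equa{\cos\gth=\frac{1-t^2}{1+t^2},\qquad \sin\gth=\frac{2t}{1+t^2},}
so rationality of $t$ forces $\gth\in C_{\mbb{Q}}$, while conversely if $(\cos\gth,\sin\gth)\in\mbb{Q}^2$ and $\cos\gth\ne -1$ then $t=\sin\gth/(1+\cos\gth)$ is rational; the exceptional case $\cos\gth=-1$ corresponds to $\tan(\gth/2)$ being undefined, which is explicitly allowed in the definition of $\mbb{Q}_{tan}$.

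\textbf{Proving (iii) and expected obstacles.} For the subgroup property, closure under negation is immediate from $\tan(-\gth)=-\tan\gth$, $\cos(-\gth)=\cos\gth$, and $\sin(-\gth)=-\sin\gth$. Closure of $\mbb{Q}_{tan}$ under addition would follow from the tangent addition formula $\tan(\ga+\gb)=(\tan\ga+\tan\gb)/(1-\tan\ga\tan\gb)$, and closure of $\mbb{Q}_{tan2}=C_{\mbb{Q}}$ under addition from the sine and cosine addition formulas, which express $\cos(\ga+\gb)$ and $\sin(\ga+\gb)$ as polynomial functions of the four rational quantities $\cos\ga,\sin\ga,\cos\gb,\sin\gb$. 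The only real obstacle, and the main place where care is needed, is careful bookkeeping at the angles where $\tan$ is undefined or where the denominator $1-\tan\ga\tan\gb$ vanishes; in those cases $\ga+\gb$ lies in $\gp/2+\gp\Z$, so $\tan(\ga+\gb)$ is again undefined and still lies in $\mbb{Q}_{tan}$ by convention. Once these degenerate cases are absorbed, the remaining arguments are completely routine.
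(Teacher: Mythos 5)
Your argument is correct and departs from the paper's at two points, both defensible. For the density statement (i), you invoke Lemma~\ref{lemma:RationalPoints} (density of rational points on the unit circle) and pull back along $\gth\mapsto(\cos\gth,\sin\gth)$; this works because that map is an open continuous surjection onto $S^1$, and preimages of dense sets under open continuous surjections are dense. The paper instead first gets density of $\mbb{Q}_{tan}$ from the branchwise homeomorphisms $\gth\mapsto\tan\gth$ and then asserts that $\mbb{Q}_{tan2}=2\mbb{Q}_{tan}$ has finite index in $\mbb{Q}_{tan}$, a claim it does not justify and which is not obviously true; your route sidesteps it. (A still shorter route, used by neither of you, is that $2\mbb{Q}_{tan}$ is the image of the dense set $\mbb{Q}_{tan}$ under the homeomorphism $\gth\mapsto 2\gth$ of $\mbb{R}$.) For the identity $\mbb{Q}_{tan2}=2\mbb{Q}_{tan}$, you establish $2\mbb{Q}_{tan}=C_{\mbb{Q}}$ via the tangent half-angle parametrization and combine with $\mbb{Q}_{tan2}=C_{\mbb{Q}}$, whereas the paper computes directly with the double-angle formula $\tan 2\gth=2pq/(p^2-q^2)$ and the identity $(p^2-q^2)^2+(2pq)^2=(p^2+q^2)^2$; both are fine, and yours has the merit of realizing all three sets as two parametrizations of the rational unit circle. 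Your treatment of (iii) via addition formulas matches the paper's. One small completeness point: when you write $\cos\gth=a/c$, $\sin\gth=b/c$ with $\gcd(a,b)=1$ and pass to $\tan\gth=b/a$, you should flag the degenerate case $a=0$, where $\tan\gth$ is undefined; the definitions of $\mbb{Q}_{tan}$ and $\mbb{Q}_{tan2}$ cover it explicitly, as the paper notes.
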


\begin{proof}
First we observe that for every integer $k \in \Z$ the function
\equ{Tan_k:(\gp k - \frac{\gp}{2},\gp k+ \frac{\gp}{2}) \lra
\mbb{R}, \gth \mapsto Tan(\gth)} is a homeomorphism. Hence the set
$\mbb{Q}_{tan}$ is dense in $\mbb{R}$.

Now we use some elementary geometry. Let $C$ be a circle with center
$O$ of unit radius. Let $A,B$ be two points on the circle such that
the $arc AB$ subtends an angle $2\gth$ at the center. Extend $OA$ to
meet the circle again at $P$. Then the $\measuredangle APB = \gth$.

We prove $(ii)$ first.

Now we prove $\mbb{Q}_{tan2}=C_{\mbb{Q}}$. Let $\gth \in
C_{\mbb{Q}}$; then $Cos(\gth)=\frac rs, Sin(\gth)= \frac uv$ for some
relatively prime integers $r,s$ and $u,v$. So we have $\frac
{r^2v^2+u^2s^2}{s^2v^2}=1$ i.e. $r^2v^2+u^2s^2=s^2v^2$. If
$Cos(\gth)=0$ then $\gth \in \mbb{Q}_{tan2}$. Let $Cos(\gth) \neq
0$. Now we observe that $Tan(\gth)$ is rational and if for some
$q,p$ relatively prime integers $\frac qp=Tan(\gth) = \frac
{us}{rv}$. Then there exists an integer $t$ such that $us=tq$ and
$rv=tp$ so $t^2(p^2+q^2)=r^2v^2+u^2s^2=s^2v^2$. So $t^2 \mid s^2v^2
\Ra t \mid sv$ and $p^2+q^2= \big(\frac {sv}{t}\big)^2$ a perfect
square. So $\gth \in \mbb{Q}_{tan2}$. The converse is also clear;
i.e. if $\gth \in \mbb{Q}_{tan2}$ then $Cos(\gth),Sin(\gth)$ are
rational.

Now we prove $\mbb{Q}_{tan2}=2\mbb{Q}_{tan}$. Let $\gth \in
\mbb{Q}_{tan}$ and if $Tan(\gth)$ is undefined then $\gth$ is an odd
multiple of $\frac {\gp}{2}$. So $2\gth$ is an integer multiple of
$\gp$. So $Tan(2\gth)=0$ and $2\gth \in \mbb{Q}_{tan2}$. If
$Tan(\gth)=0$ then $Tan(2\gth)=0$ so $2\gth \in \mbb{Q}_{tan2}$. If
$Tan(\gth)=\frac qp$ with $gcd(q,p)=1$ then
$Tan(2\gth)=\frac{2Tan(\gth)}{1-Tan^2(\gth)}=\frac{2pq}{p^2-q^2}$.
We observe that $(p^2-q^2)^2+4p^2q^2=(p^2+q^2)^2$ a perfect square.
Hence if $Tan(2\gth) = \frac uv$ with $gcd(u,v)=1$ then also
$u^2+v^2$ is a perfect square because there exists an integer $t$
such that $2pq=tu,p^2-q^2=tv$. So $2\gth \in \mbb{Q}_{tan2}$.
Conversely it is also clear that if $2\gth \in \mbb{Q}_{tan2}$ then
$Tan(\gth)$ is rational. i.e. $\gth \in \mbb{Q}_{tan}$.

Now we prove $(iii)$. We observe that
$Tan(0)=0,Tan(-\gth)=-Tan(\gth)$ and if $\gth_1+\gth_2 \neq
(2k+1)\frac{\gp}{2}$ for some $k \in \Z$ then
$Tan(\gth_1+\gth_2)=\frac{Tan(\gth_1)+Tan(\gth_2)}{1-Tan(\gth_1)Tan(\gth_2)}$.
So $\mbb{Q}_{tan}$ is an additive subgroup. Hence
$\mbb{Q}_{tan2}=2\mbb{Q}_{tan}$ is also an additive subgroup.

Now to prove $(i)$ we observe that any finite index additive
subgroup of a dense additive subgroup of reals is also dense in
reals.
\end{proof}

We note that for a right angled triangle with rational sides, the
area is rational. From the lemma above, we deduce the following
density theorem for right angled triangles which we mention below
without proof as it is straightforward.

\begin{theorem} Let $X=\{A,B,C\}$ represent three vertices of a right-angled
triangle $\Gd ABC$ in the Euclidean plane. Then given $\gep > 0$
there exists a $3-$set $X_{\gep}$ in the plane such that the points
in $X_{\gep}$ form a rational right-angled triangle and
$D(X,X_{\gep}) < \gep$. In fact we can choose $X_{\gep}$ such that
it has any one of the points of $X$ in common.
\end{theorem}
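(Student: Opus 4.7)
The plan is to use Lemma~\ref{lemma:Density}, which tells us that the set $C_{\mbb Q}$ of angles with rational sine and cosine (equivalently, Pythagorean angles) is dense in $\mbb R$. Together with density of $\mbb Q$ in $\mbb R$, this lets us simultaneously approximate one chosen length and one chosen angle of the right triangle by a rational and by a Pythagorean angle respectively, and the other two sides will then come out rational automatically. Without loss of generality the right angle sits at $B$, and we recall that a rational right triangle has rational area automatically, so the area condition needs no separate argument. There are three sub-cases according to which vertex of $X$ is kept in $X_{\gep}$.

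\textbf{Case 1: the right-angle vertex $B$ is kept fixed.} Let $\hat u,\hat v$ be the unit vectors along $\overrightarrow{BA},\overrightarrow{BC}$ (so $\hat u\perp\hat v$). Set $r=|AC|$ and $\gth=\measuredangle ACB$, so $|BA|=r\sin\gth$ and $|BC|=r\cos\gth$. Pick a rational $r_{\gep}$ arbitrarily close to $r$ and, by Lemma~\ref{lemma:Density}, a $\gth_{\gep}\in C_{\mbb Q}$ arbitrarily close to $\gth$. Define $A_{\gep}:=B+r_{\gep}\sin\gth_{\gep}\,\hat u$ and $C_{\gep}:=B+r_{\gep}\cos\gth_{\gep}\,\hat v$. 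The three sides of $\Gd BA_{\gep}C_{\gep}$ are $r_{\gep}\sin\gth_{\gep}$, $r_{\gep}\cos\gth_{\gep}$, $r_{\gep}$, all rational, and by continuity $A_{\gep}\to A$, $C_{\gep}\to C$ as the approximations are refined.

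\textbf{Case 2: a non-right-angle vertex, say $A$, is kept fixed} (the case of $C$ is symmetric). Let $\ga=\measuredangle BAC$ and $r=|AC|$. Choose rational $r_{\gep}$ near $r$ and $\ga_{\gep}\in C_{\mbb Q}$ near $\ga$. Place $C_{\gep}$ on $\overrightarrow{AC}$ at distance $r_{\gep}$ from $A$, and place $B_{\gep}$ on the ray from $A$ making angle $\ga_{\gep}$ with $\overrightarrow{AC}$ on the same side as $B$, at distance $r_{\gep}\cos\ga_{\gep}$ from $A$. A one-line coordinate check gives $|B_{\gep}C_{\gep}|=r_{\gep}\sin\ga_{\gep}$ and $\measuredangle AB_{\gep}C_{\gep}=\gp/2$ (the Thales/converse-Pythagoras computation), so all three sides are rational and the triangle is right-angled at $B_{\gep}$. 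Continuity again gives $B_{\gep}\to B$ and $C_{\gep}\to C$.

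The only spot where a naive attempt fails is Case~2: approximating the two leg lengths independently by rationals would destroy rationality of the hypotenuse. The trick—and the only real content of the proof—is to perturb the acute angle into $C_{\mbb Q}$ and let a single rational scale $r_{\gep}$ control the hypotenuse, which forces rationality of all three sides at once. With this observation the whole argument reduces to Lemma~\ref{lemma:Density} plus elementary trigonometry.
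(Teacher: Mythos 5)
Your proposal is correct, and it uses exactly the ingredient the paper points to (Lemma~\ref{lemma:Density}: density of angles with rational sine and cosine) together with density of $\mbb{Q}$ in $\mbb{R}$; since the paper omits the proof as straightforward, your argument fills it in in the intended way, and the key observation you flag at the end — that one must perturb a single scale and a Pythagorean angle rather than two leg lengths independently — is precisely the point that makes the argument go through.
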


The general case of triangles is also a straightforward consequence
of the right-angled triangles case. Now we prove
Theorem~\ref{Th:RationalTriangle} here.
\begin{proof}
Given the triangle $\Gd ABC$ in the plane, let $a$ be a largest side
among $a,b,c$. Drop a perpendicular $AD$ from the vertex $A$ to the
opposite side $BC$ with intersection point $D = AD \cap BC$. Now
$\measuredangle BAD = \ga, \measuredangle CAD = \gb$. Choose a point
$D'$ on $AD$ such that $AD'$ is rational and $d(D',D) < \gd$. Choose
by Lemma~\ref{lemma:Density}, $\ga_1,\gb_1$ such that $0 <
\ga_1,\gb_1< \frac{\gp}{2}$ and
$Cos(\ga_1),Sin(\ga_1),Cos(\gb_1),Sin(\gb_1)$ are rational and
$d(\ga,\ga_1) < \gd,d(\gb,\gb_1) < \gd$. Consider the right-angled
triangles $\Gd AD'B'$ and $\Gd AD'C'$, both having right-angles at
the vertex $D'$. Hence the line $B'C'$ is parallel to $BC$ and $a'
\os{def^n}{=} B'C' = AD'(Tan(\ga_1) + Tan(\gb_1))$ which is
rational. We also observe that $AB' = AD'Sec(\ga_1), AC' =
AD'Sec(\gb_1)$ which are rational. Finally the area of the triangle
$\Gd AB'C'$ is $\frac{1}{2}a'(AD')$ which is rational. Next choose
$\gd$ such that $D(X,\{A,B',C'\}) < \gep$ and take $X_{\gep} =
\{A,B',C'\}$. Here again we observe that the vertex $A$ is unchanged
in the approximant $X_{\gep}$ and $BC \parallel B'C'$.

In the case when the three points lie on a line then the proof is straight forward.
\end{proof}
We now use the above theorem on triangles to deduce the analogous
result in the class of parallelograms.

\begin{theorem}
Let $X=\{A,B,C,D\}$ represent the vertices of a parallelogram
$\square ABCD$ in the Euclidean plane. Then given $\gep > 0$ there
exists a rational $4$-set $X_{\gep}$ in the plane such that the
points in $X_{\gep}$ form a rational parallelogram with rational
area and $D(X,X_{\gep}) < \gep$.
\end{theorem}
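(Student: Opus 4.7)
The plan is to reduce the parallelogram case to Theorem~\ref{theorem:DensityLineAOC} by exploiting the defining feature of a parallelogram: its two diagonals share the same midpoint. Let $O$ denote the intersection of the diagonals of $\square ABCD$, so that $O$ is the midpoint of both $AC$ and $BD$; the parallelogram is then determined by the triple $(A,B,O)$ via $C=2O-A$ and $D=2O-B$. The key observation is that if one can produce an approximating configuration $(A',B',O')$ for which, setting $C':=2O'-A'$, the $4$-set $\{A',B',C',O'\}$ is rational, then setting $D':=2O'-B'$ produces a parallelogram $\square A'B'C'D'$ whose sides $A'B',B'C'$ are rational (by the $4$-set condition), whose opposite sides $C'D'=A'B'$ and $D'A'=B'C'$ are automatically rational, whose diagonal $A'C'$ is rational (again by the $4$-set condition), whose diagonal $B'D'=2\,B'O'$ is rational, and whose area equals $A'C'\cdot B'O'\cdot \sin\theta'$ where $\theta'$ is the angle at $O'$ between the diagonals.

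The construction proceeds in three steps. First, move $A$ and $C$ slightly along their common line to points $A_0,C_0$ with $A_0C_0\in\mathbb{Q}$, and let $O_0$ be the midpoint of $A_0C_0$; then $\{A_0,O_0,C_0\}$ is a rational $3$-set with ratio $A_0O_0/O_0C_0=1$. Second, using Lemma~\ref{lemma:Density}, approximate the angle $\theta$ between the diagonals $AC$ and $BD$ by an angle $\theta'$ with $\sin\theta',\cos\theta'\in\mathbb{Q}$ and $\theta'\notin F_{\measuredangle}$, and let $L$ be the line through $O_0$ making angle $\theta'$ with $A_0C_0$. Third, invoke Theorem~\ref{theorem:DensityLineAOC} to find $B'\in L$ arbitrarily close to $B$ such that $\{A_0,B',C_0,O_0\}$ is a rational $4$-set, and set $D':=2O_0-B'$. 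A direct check then gives that all six distances among $X_\epsilon:=\{A_0,B',C_0,D'\}$ are rational, the area is rational, and $D(X,X_\epsilon)<\epsilon$, provided each of the three perturbations is chosen sufficiently small.

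The main obstacle lies in the exceptional sets $F_{\measuredangle}$ and $F_{ratio}$ of Theorem~\ref{theorem:DensityLineAOC}: in our setup the ratio $A_0O_0/O_0C_0$ is pinned at $1$, so all flexibility must come from the angle $\theta'$. If $1\notin F_{ratio}$, the second bullet of the theorem yields density for all angles outside a finite set, which is easily avoided while keeping $\sin\theta',\cos\theta'$ rational via Lemma~\ref{lemma:Density}. If $1\in F_{ratio}$, one instead uses the first bullet: for each $\theta'\notin F_{\measuredangle}$, only finitely many ratios are excluded, and one must verify that the value $1$ is excluded only for a thin (at most finite) set of such $\theta'$; this follows from the algebraic structure of the quartic and elliptic families underlying Theorem~\ref{theorem:DensityLineAOC}, where the excluded ratios arise from degenerations or torsion phenomena of the associated curves. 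Granting this, a dense set of admissible $\theta'$ with rational trigonometry remains, and picking one close to $\theta$ the three-step construction above yields the desired rational parallelogram $X_\epsilon$.
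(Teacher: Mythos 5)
Your argument takes a genuinely different route from the paper's. The paper proves the parallelogram case as a short corollary of Theorem~\ref{Th:RationalTriangle}: pick the longer diagonal $AC$, so that $\measuredangle ABC$ is obtuse or right and $AC$ is the largest side of $\Gd ABC$; approximate $\Gd ABC$ by a rational triangle $\Gd A'B'C'$ with $B'=B$ and $A'C'\parallel AC$, translate so the line through $A'C'$ coincides with the line through $AC$, and complete to $\square A'B'C'D'$ by the central symmetry of a parallelogram. All six distances are rational because the symmetry carries the three rational sides of $\Gd A'B'C'$ to the remaining three, and the area is twice the rational area of $\Gd A'B'C'$. You instead reduce to Theorem~\ref{theorem:DensityLineAOC} with the ratio $\frac{AO}{OC}$ pinned at $1$, since $O$ is the common midpoint of the two diagonals.

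Unfortunately that reduction meets an obstruction which you flag but underestimate. Pinning the ratio at $1$ forces $n=\frac{OC}{AO}=1$ in the paper's parametrization, and at $n=1$ the seed point collapses onto the auxiliary point: $P_1(m,1)=(2m,0)=P_2(m,1)$. Combined with the collinearity relation $P_1+P_2+P_3=O$ and the fact that $P_3$ is $2$-torsion, this gives $2P_1(m,1)=P_3(m,1)\neq O$ and hence $4P_1(m,1)=O$; so $P_1(m,1)$ has exact order $4$ for \emph{every} $m\neq 0$. Thus the ratio $1$ is a bad ratio for every admissible angle, not merely for a ``thin'' exceptional set, and the first bullet of Theorem~\ref{theorem:DensityLineAOC} excludes the ratio $1$ uniformly. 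Your fallback (``use the first bullet and argue $1$ is only rarely excluded'') therefore cannot be completed within what that theorem supplies. Your symmetry computations are fine: $D'=2O'-B'$ does produce the remaining rational distances and the area $A_0C_0\cdot B'O_0\cdot\sin\theta'$ is indeed rational; but they rest on a density statement that fails exactly at the ratio you need. The elementary reduction to the triangle theorem, as in the paper, carries no such exceptional ratio and is the right tool here.
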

\begin{proof}
Let $AC,BD$ be the diagonals of the parallelogram such that $AC \geq
BD$. Then $AC$ is the largest side of the congruent triangles $\Gd
ABC$ and $\Gd ADC$ because $\measuredangle ABC=\measuredangle ADC$
is obtuse or just right. Using Theorem~\ref{Th:RationalTriangle} we
get an approximant $\Gd A'B'C'$ such that $B'=B$ and $A'C' \parallel
AC$. Now we parallel translate $\Gd A'B'C'$ so that line determined
by the line segment $A'C'$ coincides with that of $AC$. Then we
complete this to a parallelogram $\square A'B'C'D'$ with $A'C'$ and
$B'D'$ as diaagonals. In this procedure while approximating we make
sure $D(\{A,B,C\},\{A',B',C'\}) < \gep$. So that by symmetry of
parallelograms we obtain an $\gep$-approximant rational
parallelogram $\square A'B'C'D'$ to parallelogram $\square ABCD$.
Moreover it has rational area as the area is twice the area of the
rational triangle $\Gd A'B'C'$.
\end{proof}

\section{Rational Points on a Hyperbola}

The proof of our general results on quadrilaterals in section
$\ref{sec:MainTheoremOnDensity}$ requires some analysis of rational
points on hyperbolae. Indeed, we prove:

\begin{theorem}
\label{lemma:DensityLinePQ}
~\\
Let $P,Q$ be two points in the plane at a rational distance from
each other. Let $L$ be a line passing through $P$ such that the
cosine of the angle between $L$ and $PQ$ is rational. Then the set
of all points on $L$ which are at a rational distance from $P$ and
$Q$ are dense in $L$. Conversely if there exists a point on $L$
which is at a rational distance from $P$ and $Q$ then the cosine of
the angle between $L$ and $PQ$ is rational.
\end{theorem}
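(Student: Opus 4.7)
The plan is to reduce the direct statement to producing a dense set of rational points on a conic arising from the law of cosines, and then to read off density from a standard rational parametrization. Place coordinates so that $P$ is at the origin and $Q$ at $(q,0)$ with $q = PQ \in \mbb Q$, and write a point on $L$ as $B(t) = (t\cos\gth, t\sin\gth)$, where $\gth$ is the angle between $L$ and $PQ$ and $c := \cos\gth \in \mbb Q$ by hypothesis. Then $d(P,B) = |t|$ and $d(Q,B)^2 = t^2 - 2qct + q^2$, so the task is to exhibit a dense set of rational $t$ for which $t^2 - 2qct + q^2$ is the square of a rational. Completing the square via $u = t - qc$ and writing $s = d(Q,B)$, the condition becomes
\equ{s^2 - u^2 = q^2(1-c^2) =: R,}
a conic over $\mbb Q$. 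The quantity $R = q^2\sin^2\gth$ is nonnegative; when $R = 0$ (equivalently $c = \pm 1$) the line $L$ coincides with the line $PQ$, every rational $t$ already works, and density is immediate, so I may assume $R > 0$.

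Assuming $R > 0$, I would factor $(s-u)(s+u) = R$ and use the rational parametrization
\equ{s - u = \gl, \quad s + u = R/\gl, \quad u = \tfrac{1}{2}(R/\gl - \gl), \quad s = \tfrac{1}{2}(R/\gl + \gl),}
valid for every $\gl \in \mbb Q^*$. Translating back, $t(\gl) = qc + \tfrac{1}{2}(R/\gl - \gl)$ is rational whenever $\gl$ is, and $B(t(\gl))$ is a point on $L$ at rational distance from both $P$ and $Q$. The main content of the argument is then to verify that $\{t(\gl) : \gl \in \mbb Q^+\}$ is topologically dense in $\mbb R$; this I would deduce from the fact that the continuous real function $\gl \mapsto qc + \tfrac{1}{2}(R/\gl - \gl)$ on $(0,\ify)$ is strictly decreasing (its derivative equals $-\tfrac{1}{2}(R/\gl^2 + 1) < 0$) and has full image $\mbb R$, so its restriction to the dense subset $\mbb Q^+$ has dense image. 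Pushing forward along $t \mapsto B(t)$ delivers the required dense subset of $L$.

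For the converse, I would pick any point $B \neq P$ on $L$ with $d(P,B), d(Q,B) \in \mbb Q$ and apply the law of cosines to the (possibly degenerate) triangle $PBQ$:
\equ{d(Q,B)^2 = d(P,B)^2 + PQ^2 - 2\, d(P,B)\cdot PQ \cdot \cos\gth.}
Since $d(P,B) \neq 0$ and all remaining quantities are rational, $\cos\gth$ is rational. I do not foresee any serious obstacle here; the only step that goes beyond formal manipulation is the passage from a rational parametrization of the conic to a topologically dense image in $\mbb R$, and this is handled by the monotonicity observation above.
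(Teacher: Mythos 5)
Your proof is correct and follows essentially the same route as the paper. Both arguments reduce the problem, via the law of cosines and a completion of the square, to rational points on the hyperbola $s^2 - u^2 = q^2\sin^2\gth$ (the paper calls it $H_a$ with $a = r^2\sin^2\gth$) and then use the identical rational parametrization $s = \tfrac12(R/\gl + \gl)$, $u = \tfrac12(R/\gl - \gl)$ to produce a dense set of rational distances along $L$; your monotonicity check of $t(\gl)$ plays the same role as the paper's composite homeomorphism $\mbb R^+ \to H^+_{r^2\sin^2\gth} \to L$.
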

\begin{proof}
Assume without loss of generality the line $PQ$ represents $x$-axis
with $P$ as the origin and $Q$ is at a rational distance $r$ from
the origin. Let the equation of the line $L$ be $y=Tan(\gth)x$ where
$Cos(\gth)$ is rational which implies $Sin^2(\gth),Cos^2(\gth)$,
$Tan^2(\gth)$ are rational if $\gth \neq (2k+1)\frac{\gp}{2}, k \in
\Z$ with $Tan(\gth),Sin(\gth)$ need not be rational. Let $R$ be a
point on $L$ at a distance $q$ from the origin and at a distance $p$
from $Q$ i.e. $\Gd PQR$ is a triangle with $PQ = r,QR = p,RP = q$.
We see by computing distances, if we set $s^2=p^2-r^2Sin^2(\gth)$
then $q = rCos(\gth) + s$. If $s$ is rational then $q$ is rational
since $r,Cos(\gth)$ are rational.

To proceed with the proof, we need the following observation.
\begin{obs}
\label{cl:hyperbola} Let $H_a=\{(x,y)\in \mbb{R}^2 \mid x^2-y^2=a\}$
where $a$ is a non-zero rational representing a hyperbola $H_a$ in
the plane $\mbb{R}^2$. Then the set $H_a(\mbb{Q}) = \{(x,y) \in H_a
\mid x,y \in \mbb{Q}\}$ of rational points is dense in $H_a$.
\end{obs}
\begin{proof}
(of Observation~\ref{cl:hyperbola}) Let $u \in \mbb{R}^{*}$ and set
$x=\frac{u+\frac{a}{u}}{2}, y=\frac{u-\frac{a}{u}}{2}$. We see
immediately that $(x,y) \in H_a(\mbb{Q})$ if $u \in \mbb{Q}^{*}$
since $a$ is rational and the isomorphism $\mbb{R}^{*} \lra H_a$
taking $u$ to $(x,y)$ establishes the Claim~\ref{cl:hyperbola}.
\end{proof}
Continuing with the proof of the Theorem, now consider the part of
the hyperbola $H^{+}_{r^2Sin^2(\gth)}$ corresponding to
$p=x$-coordinate being positive and the following isomorphism
\begin{equation*}
\begin{aligned}
\us{\mbb{R}^{+}}{\ul{u}} &\lra \us{H^{+}_{r^2Sin^2(\gth)}}{\ul{(p=\frac{(u+\frac{r^2Sin^2(\gth)}{u})}{2},s=\frac{(u-\frac{r^2Sin^2(\gth)}{u})}{2})}} \lra \\
&\lra \us{L}{\ul{R=
((rCos(\gth)+s)Cos(\gth),(rCos(\gth)+s)Sin(\gth))}}
\end{aligned}
\end{equation*}
The inverse map being
\begin{equation*}
\begin{aligned}
\us{L}{\ul{R=(x,xTan(\gth))}} &\lra \us{H^{+}_{r^2Sin^2(\gth)}}{\ul{(p=\sqrt{r^2-2xr+\frac{x^2}{Cos^2(\gth)}},s=\frac{x}{Cos(\gth)}-rCos(\gth)})} \lra \\
&\lra \us{\mbb{R}^{+}}{\ul{u=\frac{p+s}{2}}}
\end{aligned}
\end{equation*}
In the maps defined above we note that the co-ordinates of $R$ need
not be rational if $Tan(\gth),Sin(\gth)$ are not rational, however
the distances $PR,QR$ are rational if $Cos(\gth)$ is rational when
we have a rational $u$. From the observation~\ref{cl:hyperbola} we
establish the density of points at a rational distance from $P$ and
$Q$ on the line $L$. Conversely by cosine rule if the distances
$PR,QR$ are rational for some point $R$ on $L$ then by cosine rule
the cosine of the angle between $L$ and $PQ$ is rational. Hence
Theorem~\ref{lemma:DensityLinePQ} follows.
\end{proof}

\section{Rational Points on Families of Quartic and Cubic Curves}

As mentioned in the introduction, our results on quadrilaterals will
proceed by re-expressing the problems in terms of rational points on
families of some quartic and cubic curves. Firstly, we reformulate
the density question for the quadrilateral as a question about
rational points on a two parameter family of quartic curves.

\begin{lemma}
Let $A,O,C$ be three collinear points in the plane with point $O$ on
the line segment $AC$ such that the $3$-set $\{A,O,C\}$ is a
rational set. Let $L$ be a line passing through $O$ such that the
sine and cosine of the angle $\gth$ between and $L$ and $AC$ are
rational. Let $\frac{OC}{AO}=Cot(\gb)$. Then any rational point
$(x,y)$ to the equation
\begin{equation}
\label{eq:Quartic}
\begin{aligned}
y^2     &=  x^4 + p(m,n)x^3 + q(m,n)x^2 + r(m,n)x + 1 \text{ where }\\
p       &=  4(1+n)m = 4Cot(\gth)(1+Cot(\gb))\\
q       &=  4(1+n)^2m^2+4n^2-2 = 4(1+Cot(\gb))^2Cot^2(\gth)+4Cot^2(\gb)-2\\
r       &=  -4(1+n)m = -4Cot(\gth)(1+Cot(\gb))
\end{aligned}
\end{equation}
where $m=Cot(\gth),n=Cot(\gb)$ gives rise to a point $B$ on the line
$L$ such that the distances $AB,CB,OB$ are all rational and
conversely any such point $B$ gives rise to a rational point $(x,y)$
on the quartic curve.
\end{lemma}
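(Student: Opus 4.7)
The plan is to transcribe the figure into Cartesian coordinates, rationally parametrize the locus $\{AB\in\mbb{Q}\}$ on the line $L$, and then turn the remaining condition $CB\in\mbb{Q}$ into a single algebraic equation that we recognize as the quartic~\eqref{eq:Quartic}. Place $O$ at the origin with $AC$ along the $x$-axis and scale so that $AO=1$ and $OC=n=\cot\gb$; both are rational because $\{A,O,C\}$ is a rational set. With $A=(-1,0)$ and $C=(n,0)$, a typical point of $L$ is $B_{t}=(t\cos\gth,t\sin\gth)$ and $OB=|t|$. A direct distance computation gives
\[
AB^{2}=(t+\cos\gth)^{2}+\sin^{2}\gth,\qquad CB^{2}=(t-n\cos\gth)^{2}+n^{2}\sin^{2}\gth.
\]
Since $\sin\gth,\cos\gth\in\mbb{Q}$, rationality of $OB$ is equivalent to rationality of $t$.

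Next I rationally parametrize the locus $\{AB\in\mbb{Q}\}$ using the factorization $(AB-(t+\cos\gth))(AB+(t+\cos\gth))=\sin^{2}\gth$. Introducing $u=AB+t+\cos\gth$ gives
\[
AB=\frac{u^{2}+\sin^{2}\gth}{2u},\qquad t=\frac{u^{2}-\sin^{2}\gth}{2u}-\cos\gth,
\]
so rationality of $u$ is equivalent to simultaneous rationality of $AB$ and $t$. I then set $x=u/\sin\gth$ (treating the degenerate case $\sin\gth=0$, in which $L$ coincides with $AC$, separately via Theorem~\ref{lemma:DensityLinePQ}); this absorbs all denominators of $\sin\gth$ in what follows.

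Substituting this expression for $t$ into $CB^{2}$, clearing the $(2u)^{2}$ denominator by multiplying $CB^{2}$ by $4x^{2}/\sin^{4}\gth$, and using $m=\cot\gth$ to rewrite the remaining $\cos\gth/\sin\gth$ factors, the condition $CB\in\mbb{Q}$ takes the form
\[
\left(\tfrac{2x\,CB}{\sin\gth}\right)^{2}=x^{4}+p\,x^{3}+q\,x^{2}+r\,x+1,
\]
with coefficients $p,q,r$ matching~\eqref{eq:Quartic} up to a substitution $x\mapsto -x$ that absorbs the convention chosen for the positive direction on $L$; both conventions cut out the same curve. Setting $y=2x\,CB/\sin\gth$ provides the forward map $B\mapsto(x,y)$, and reading off $t$ and $CB$ as rational functions of $(x,y)$ inverts it, yielding the required bijection between points $B$ on $L$ making $\{A,B,C,O\}$ rational and rational points on the quartic.

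The main obstacle is bookkeeping in the expansion above: one has to verify that the polynomial $[(u^{2}-\sin^{2}\gth)-2u(1+n)\cos\gth]^{2}+4u^{2}n^{2}\sin^{2}\gth$, after the substitution $u=x\sin\gth$ and division by $\sin^{4}\gth$, matches the claimed quartic term by term. The $-2$ in the $x^{2}$-coefficient of $q$ comes from $(x^{2}-1)^{2}=x^{4}-2x^{2}+1$, and the pair $(1+n)^{2}\cos^{2}\gth$ and $n^{2}\sin^{2}\gth$ collapses to $(1+n)^{2}m^{2}+n^{2}$ after dividing by $\sin^{2}\gth$. The small-print loci $u\le 0$, $x=0$, and $\sin\gth=0$ contribute only finitely many excluded points and affect neither the bijection nor the density applications that will use it.
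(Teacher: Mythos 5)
Your proposal is correct, and it reaches the quartic by a genuinely different route than the paper. The paper fixes $O$ at the origin, $A=(-a,0)$, $C=(c,0)$, introduces two one-parameter families of lines through $A$ and $C$ with slopes $m_A,m_C$, computes their intersection $B$, and imposes that $B$ lie on $L$; this forces $m_A$ to be a rational function of $m_C$ and $\tan\gth$, and the rationality of $BO,BA,BC$ becomes the requirement that $1+m_C^2$ and $1+m_A^2$ be rational squares. The paper then substitutes $m_C = \frac{2\tan\gga}{1-\tan^2\gga}=\tan(2\gga)$, making $1+m_C^2=\sec^2(2\gga)$ automatically a rational square whenever $\tan\gga\in\mbb{Q}$, and the remaining condition that $1+m_A^2$ be a rational square becomes precisely the quartic~\eqref{eq:Quartic} in $x=\tan\gga$. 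You instead parametrize $B$ on $L$ by the signed distance $t=OB$, rationalize the conic condition $AB^2-(t+\cos\gth)^2=\sin^2\gth$ by $u=AB+t+\cos\gth$ so that $u\in\mbb{Q}$ is equivalent to $AB,t\in\mbb{Q}$, and then read off the remaining condition $CB\in\mbb{Q}$ directly as the quartic in $x=u/\sin\gth$. Your expansion checks out: after the substitution one finds
\begin{equation*}
\Bigl(\tfrac{2x\,CB}{\sin\gth}\Bigr)^{2}=\bigl[(x^{2}-1)-2x(1+n)m\bigr]^{2}+4x^{2}n^{2}=x^{4}+r\,x^{3}+q\,x^{2}+p\,x+1,
\end{equation*}
which, since $r=-p$, is exactly the paper's quartic after $x\mapsto-x$, a substitution preserving rational points; you acknowledge this. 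Your parametrization is arguably cleaner (one parameter from the start, no auxiliary families of lines), at the cost of needing to dispose of the degenerate loci $\sin\gth=0$, $x=0$ and the apparent constraint $u>0$; the last is in fact automatic for real $B$ on $L$ (since $AB\ge|t+\cos\gth|$), and the $x<0$ rational points still yield valid $B$ via the involution $x\mapsto-1/x$ under which the quartic is invariant, so none of these is a real gap. The paper's parametrization, by taking $m_C$ as the free slope, avoids the sign discussion entirely at the cost of a more involved initial setup. Both proofs establish the stated correspondence.
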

\begin{proof}
By rotation and translation if necessary we can assume that the line
$AOC$ is the $x$-axis, $O$ is the origin and $A$ is to the left of
$O$ with coordinates $(-a,0)$ and $C$ is a point to the right of $O$
with coordinates $(c,0)$. Let the line $L$ make an angle $\gth \neq
\frac{\gp}{2}$ with respect to $x$-axis at the origin. The case
$\gth = \frac{\gp}{2}$ can be considered separately.

Consider two families $\mcl{C},\mcl{A}$ of lines passing through the
point $C$ and the point $A$ respectively. Let $m_C$ and $m_A$ denote
the slopes of any two lines one representing a line in the family
$\mcl{C}$ and one representing a line in family $\mcl{A}$
respectively. The equations of the lines are given by

\begin{equation*}
\begin{aligned}
Y &=    m_C(X-c) \ldots \text{ family } \mcl{C}\\
Y &=    m_A(X+a) \ldots \text{ family } \mcl{A}\\
Y &=    Tan(\gth)X \ldots \text{ Line } L
\end{aligned}
\end{equation*}

Any intersection point $B = (X,Y)$ in the plane of two lines one
from family $\mcl{C}$ and one from family $\mcl{A}$ are given by

\begin{equation*}
\begin{aligned}
X   &=  \frac{cm_C+am_A}{m_C-m_A}\\
Y   &=  \frac{(a+c)m_Cm_A}{m_C-m_A}
\end{aligned}
\end{equation*}

The distances $BO,BA,BC$ are given by

\begin{equation*}
\begin{aligned}
BO  &=  \sqrt{\frac{(cm_C+am_A)^2+(a+c)^2m_C^2m_A^2}{(m_C-m_A)^2}}\\
BA  &=  \frac{(a+c)m_C\sqrt{1+m_C^2}}{\sqrt{(m_C-m_A)^2}}\\
BC  &=  \frac{(a+c)m_A\sqrt{1+m_A^2}}{\sqrt{(m_C-m_A)^2}}
\end{aligned}
\end{equation*}

Suppose $B$ lies on the line $L$ then we have

\begin{equation*}
\begin{aligned}
m_A &=  \frac{cm_CTan(\gth)}{(a+c)m_C-aTan(\gth)}\\
BO  &=  \frac{(cm_C+am_A)\sqrt{Sec^2(\gth)}}{\sqrt{(m_C-m_A)^2}}
\end{aligned}
\end{equation*}

So for such a point $B$ on the line $L$, $BO,BA,BC$ are rational if
the following happens.

\begin{equation}
\label{condition:Rational}
\begin{aligned}
&   m_C \text{ is rational.}\\
&   1+m^2_C \text{ is a square of a rational.} \\
&   1+m^2_A \text{ is a square of a rational which is equivalent to }\\
&   ((a+c)m_C-aTan(\gth))^2(1+m^2_A) = c^2m_C^2Tan^2(\gth)+((a+c)m_C-aTan(\gth))^2\\
&   \text{ being a square of a rational.}
\end{aligned}
\end{equation}

Substituting
\begin{equation}
\label{eq:BetaEquation}
\begin{aligned}
\frac{a}{c} &= Tan(\gb)\\
m_C         &= \frac{2Tan(\gga)}{1-Tan^2(\gga)}
\end{aligned}
\end{equation}

the rationality conditions~\ref{condition:Rational} are satisfied if

\begin{equation}
\label{condition:TrigRational}
\begin{aligned}
&   Tan(\gga) \text{ is rational.}\\
&   Tan^4(\gga)+4Cot(\gth)\big(1+Cot(\gb)\big)Tan^3(\gga)+\\
&   \big(4Cot^2(\gb)-2+4\big(1+Cot(\gb)\big)^2Cot^2(\gth)\big)Tan^2(\gga)\\
&   -4Cot(\gth)\big(1+Cot(\gb)\big)Tan(\gga)+1\\
&   \text{ is square of a rational.}
\end{aligned}
\end{equation}

The above rationality condition~\ref{condition:TrigRational} gives
rise to a rational solution to the following
equation~(\ref{eq:Quartic}) and conversely any rational solution
$(x,y)$ to the following equation~(\ref{eq:Quartic}) gives rise to a
value of $Tan(\gga)=x$ and hence the slope $m_C$ with other
rationality conditions~\ref{condition:Rational}
and~\ref{condition:TrigRational} being satisfied.

The case $\gth=\frac{\gp}{2}$ is similar.
\end{proof}

As a second step we transform two parameter family of quartic curves
into a family of cubic curves over rationals.

\begin{lemma}
There exists a $(x,y)-(U,W)$ transformation which transforms the
given two parameter family of quartic curves
\begin{equation}
\label{eq:Quartic}
\begin{aligned}
y^2     &=  x^4 + p(m,n)x^3 + q(m,n)x^2 + r(m,n)x + 1 \text{ where }\\
p       &=  4(1+n)m = 4Cot(\gth)(1+Cot(\gb))\\
q       &=  4(1+n)^2m^2+4n^2-2 = 4(1+Cot(\gb))^2Cot^2(\gth)+4Cot^2(\gb)-2\\
r       &=  -4(1+n)m = -4Cot(\gth)(1+Cot(\gb))
\end{aligned}
\end{equation}
where $m=Cot(\gth),n=Cot(\gb)$

into a two parameter family of cubic curves given by

\begin{equation*}
\begin{aligned}
W^2 &=  U^3+\frac{3p^2-8q}{16}U^2+\frac{3p^4-16p^2q+16q^2+16pr-64}{256}U+\frac{(p^3-4pq+8r)^2}{4096}\\
    &=  U^3+AU^2+BU+C
\end{aligned}
\end{equation*}
where
\begin{equation*}
\begin{aligned}
A   &=  \frac{3p^2-8q}{16}= 1-2Cot^2(\gb)+\big(1+Cot(\gb)\big)^2Cot^2(\gth)\\
    &=  1-2n^2+\big(1+n\big)^2m^2\\
B   &=  \frac{3p^4-16p^2q+16q^2+16pr-64}{256}\\
    &=  -Cot^2(\gb)\big(1+Cot(\gb)\big)\bigg(\big(1-Cot(\gb)\big)+2\big(Cot(\gb)+1\big)Cot^2(\gth)\bigg)\\
    &=  -n^2\big(1+n\big)\bigg(\big(1-n\big)+2\big(n+1\big)m^2\bigg)\\
C   &=  \bigg(\frac{p^3-4pq+8r}{64}\bigg)^2\\
    &=  \big(-Cot(\gb)^2\big(1+Cot(\gb)\big)Cot(\gth)\big)^2\\
    &=  \big(n^4\big(1+n\big)^2m^2\big)
\end{aligned}
\end{equation*}
\end{lemma}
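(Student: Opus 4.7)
My plan is the classical reduction of a quartic with a rational point to a Weierstrass cubic, keyed to the rational point $(0,1)$ of the quartic $y^2 = x^4 + px^3 + qx^2 + rx + 1$. First I would absorb the top two degrees of the right hand side by substituting $z = y - x^2 - \tfrac{p}{2}x$, which rewrites the quartic identity as a conic in $(x,z)$:
\equ{\left(2z + \tfrac{p^2}{4} - q\right) x^2 + (pz - r)\, x + (z^2 - 1) \;=\; 0.}
Rationality of $x$ for a given $z$ amounts to the discriminant of this quadratic in $x$ being a rational square; call this square $W_0^2$. A direct expansion yields the cubic relation
\equ{W_0^2 \;=\; -8z^3 + 4qz^2 + (8 - 2pr)\,z + (r^2 + p^2 - 4q),}
and the quadratic formula exhibits $x$ (and hence $y = x^2 + \tfrac{p}{2}x + z$) as a rational function of $(z, W_0)$, so the correspondence $(x,y) \leftrightarrow (z, W_0)$ is birational.

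To match the normalization in the statement, I will rescale by $W = W_0/8$ and shift $z = \tfrac{4q - p^2}{8} - 2U$. The shift is forced: it is the unique affine change in $z$ that produces the coefficient $A = (3p^2 - 8q)/16$ on $U^2$. Expanding the cubic in the new coordinates gives $W^2 = U^3 + AU^2 + BU + C$, and a direct polynomial computation recovers the stated value of $B$.

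The one step that is more than bookkeeping is the constant term. I plan to introduce $u := 4q - p^2$ and verify that the constant collapses algebraically to $(pu/8 - r)^2/64 = \bigl((p^3 - 4pq + 8r)/64\bigr)^2$, confirming both the value of $C$ and the fact that it is a perfect square. The reason behind this phenomenon is transparent from the construction: at $U = 0$, i.e.\ $z = (4q - p^2)/8$, the leading coefficient $2z + \tfrac{p^2}{4} - q$ of the conic vanishes, so the conic degenerates to a linear equation in $x$ and the discriminant reduces to $(pz - r)^2$, which is automatically a square. Once $A, B, C$ are established as polynomials in $p, q, r$, the trigonometric expressions in the lemma follow by substituting $p = 4(1+n)m$, $q = 4(1+n)^2 m^2 + 4n^2 - 2$, $r = -4(1+n)m$ with $m = Cot(\gth)$ and $n = Cot(\gb)$; this is a mechanical simplification I would not write out in detail. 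The main obstacle is simply organizing the polynomial arithmetic so that the remarkable factorizations above become visible.
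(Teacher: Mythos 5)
Your proposal is correct and reaches exactly the same $(x,y)\leftrightarrow(U,W)$ transformation as the paper, but by a genuinely different route. You complete the square in $y$ to set $z = y - x^2 - \tfrac{p}{2}x$, view the quartic as a conic $(-4U)x^2+(pz-r)x+(z^2-1)=0$ over $\mathbb{Q}(z)$ (where $2z+\tfrac{p^2}{4}-q=-4U$), and read off the cubic as the discriminant — the standard reduction of a quartic with a rational point to a Weierstrass cubic, with the bonus that the degeneration of the conic at $U=0$ transparently explains why $C$ is a perfect square. The paper instead writes $y = U'+\Sigma$ with $\Sigma = x^2+\tfrac{p}{2}x-\tfrac{p^2-4q}{8}$, substitutes $x = V'/U'$, clears denominators to get a bi-quadratic relation in $(U',V')$, rescales $U'=-2U$, $V'=-2V$, and finally completes the square $W = V + \tfrac{p}{4}U + \tfrac{p^3-4pq+8r}{64}$. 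Tracing both sets of substitutions shows your $U$ is identical to the paper's and your $W$ is $-1$ times theirs, which is immaterial on a curve of the form $W^2=\cdots$. Your version makes the birationality and the shape of $C$ more conceptual; the paper's version is a more mechanical but self-contained chain of polynomial substitutions. Either way the coefficients $A,B,C$ come out as polynomials in $p,q,r$, and the final substitutions $p=4(1+n)m$, $q=4(1+n)^2m^2+4n^2-2$, $r=-4(1+n)m$ are routine, as you indicate.
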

\begin{proof}
Let
\begin{equation*}
P(x)=x^4 + px^3 + qx^2 + rx + 1
\end{equation*}

Notice that the equation~(\ref{eq:Quartic}) can be written as
follows by completing the squares
\begin{equation}
\label{eq:transformquartic}
y^2 =\bigg(x^2+\frac{p}{2}x-\frac{p^2-4q}{8}\bigg)^2 + \bigg(\frac{p(p^2-4q)}{8}+r\bigg)x+
1-\bigg(\frac{p^2-4q}{8}\bigg)^2\\
\end{equation}

Now to get rid of the fourth power in $x$ we substitute $y=U'+\Gs$
where $\Gs=x^2+\frac{p}{2}x-\frac{p^2-4q}{8}$ and also substitute
$x=\frac{V'}{U'}$. So $y$ becomes
$y=(U'+(\frac{V'}{U'})^2+\frac{p}{2}(\frac{V'}{U'})-\frac{p^2-4q}{8})$
and now multiplying by $U'$ on both sides of
equation~(\ref{eq:transformquartic}) we get

\begin{equation*}
\begin{aligned}
U'^3-\bigg(\frac{p^2-4q}{4}\bigg)U'^2+&\bigg(\frac{p^2-4q-8}{8}\bigg)\bigg(\frac{p^2-4q+8}{8}\bigg)U'\\
&   = -2V'^2-pU'V'+\bigg(\frac{p^3-4pq+8r}{8}\bigg)V'
\end{aligned}
\end{equation*}

Replacing $U'$ by $-2U$ and $V'$ by $-2V$ and dividing by $8$ we get

\begin{equation*}
\begin{aligned}
U^3+\bigg(\frac{p^2-4q}{8}\bigg)U^2+&\frac{(p^2-4q-8)(p^2-4q+8)}{256}U\\
&   =V^2+\frac{p}{2}UV+\frac{p^3-4pq+8r}{32}V
\end{aligned}
\end{equation*}

Now to get rid of the $UV-$term we substitute  $W = V + \frac{p}{4}U
+ \frac{p^3-4pq+8r}{64}$ and eliminating $V$  we get the following
two parameter family of cubic curves

\begin{equation*}
\begin{aligned}
W^2 &=  U^3+\frac{3p^2-8q}{16}U^2+\frac{3p^4-16p^2q+16q^2+16pr-64}{256}U+\frac{(p^3-4pq+8r)^2}{4096}\\
    &=  U^3+AU^2+BU+C
\end{aligned}
\end{equation*}
where
\begin{equation*}
\begin{aligned}
A   &= \frac{3p^2-8q}{16}\\
B   &= \frac{3p^4-16p^2q+16q^2+16pr-64}{256}\\
C   &=  \bigg(\frac{p^3-4pq+8r}{64}\bigg)^2\\
\end{aligned}
\end{equation*}
The $(x,y)-(U-V)$ transformation in this case of the quartic
equation~(\ref{eq:Quartic}) is given by
\begin{equation}
\label{eq:xyUVTransfomation}
\begin{aligned}
U       &=  -\frac{1}{2}\bigg(y-x^2-2\big(1+Cot(\gb)\big)Cot(\gth)x+1-2Cot^2(\gb)\bigg)\\
        &=  -\frac{1}{2}\bigg(y-x^2-2(1+n)mx+(1-2n^2)\bigg)\\
V       &=  -\frac{1}{2}\bigg(x\big(y-x^2-2\big(1+Cot(\gb)\big)Cot(\gth)x+1-2Cot^2(\gb)\big)\bigg)\\
        &=  -\frac{1}{2}\bigg(x\big(y-x^2-2(1+n)mx+(1-2n^2)\big)\bigg)\\
x       &=  \frac{V}{U}\\
y       &=  -2U+\bigg(\frac{V}{U}\bigg)^2+2\big(1+Cot(\gb)\big)Cot(\gth)\bigg(\frac{V}{U}\bigg)+2Cot^2(\gb)-1\\
        &=  -2U+\bigg(\frac{V}{U}\bigg)^2+2\big(1+n\big)m\bigg(\frac{V}{U}\bigg)+2n^2-1
\end{aligned}
\end{equation}
and
\begin{equation}
\label{eq:Coefficients}
\begin{aligned}
A(m,n)  &= \frac{3p^2-8q}{16} = 1-2Cot^2(\gb)+\big(1+Cot(\gb)\big)^2Cot^2(\gth)\\
                &=  1-2n^2+\big(1+n\big)^2m^2\\
B(m,n)  &= \frac{3p^4-16p^2q+16q^2+16pr-64}{256}\\
                &=  -Cot^2(\gb)\big(1+Cot(\gb)\big)\bigg(\big(1-Cot(\gb)\big)+2\big(Cot(\gb)+1\big)
                Cot^2(\gth)\bigg)\\
                &=  -n^2\big(1+n\big)\bigg(\big(1-n\big)+2\big(n+1\big)m^2\bigg)\\
C(m,n)  &=  \bigg(\frac{p^3-4pq+8r}{64}\bigg)^2\\
                &=  \big(-Cot(\gb)^2\big(1+Cot(\gb)\big)Cot(\gth)\big)^2\\
                &=  \big(n^4\big(1+n\big)^2m^2\big)
\end{aligned}
\end{equation}
This proves the lemma on transformation from quartics to cubics.
\end{proof}
We observe that the cubic polynomial $Q(U)=U^3+AU^2+BU+C$ has the
following factorization into a linear and a quadratic factor.
\begin{equation}
\label{eq:Factorization}
\begin{aligned}
Q(U)&=U^3+\big(1-2n^2+\big(1+n\big)^2m^2\big)U^2+\\
&\big(-n^2(1+n)\big((1-n)+2(n+1)m^2\big)\big)U+\big(n^4(1+n)^2m^2\big)\\
&=(U-n^2)(U^2+(m^2(1+n)^2-n^2+1)U-m^2n^2(1+n)^2)
\end{aligned}
\end{equation}

The discriminant of $Q(U)$ is given by
\equ{disc(Q(U))=n^4(1+n)^2(1+m^2)((1-n)^2+(1+n)^2m^2)} Now we figure
the points $(m,n) \in \mbb{C}^2$ where the cubic polynomial fails to
have three distinct factors and hence these points $(m,n)\in
\mbb{C}^2$ represent a singular cubic.

We mention the following Lemma~\ref{lemma:RepeatedFactors} without
proof as it is straight forward.
\begin{lemma}[Repeated Factors]
\label{lemma:RepeatedFactors} Let $m,n \in \mbb{C}$. The cubic
polynomial $Q(U)$ has repeated factors if and only if the
discriminant of $Q(U)$ is zero if and only if
\begin{itemize}
\item $n=0,-1$\\
\item $m = \pm \imath$\\
\item For any value of $n \neq -1$, $m= \pm \imath \frac{n-1}{n+1}$
\end{itemize}
\end{lemma}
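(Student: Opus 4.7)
The plan is to exploit the factorization $Q(U)=(U-n^2)R(U)$ displayed in equation~(\ref{eq:Factorization}), with $R(U)=U^2+(m^2(1+n)^2-n^2+1)U-m^2n^2(1+n)^2$, so that $Q$ has a repeated root if and only if either (a) $R$ has a double root, or (b) $U=n^2$ is itself a root of $R$. I would treat these two cases in turn.

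For case (b), a short direct substitution collapses to $R(n^2)=n^2$, which vanishes exactly when $n=0$. This accounts for the first of the listed alternatives.

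For case (a), I would compute the discriminant of $R$, namely $\Delta(R)=(m^2(1+n)^2-n^2+1)^2+4m^2n^2(1+n)^2$. Using the identity $(1-n^2)^2=(1-n)^2(1+n)^2$, an overall factor of $(1+n)^2$ comes out, leaving $m^4(1+n)^2+2m^2(1+n^2)+(1-n)^2$ as a quadratic in $m^2$ whose own discriminant in $m^2$ simplifies to $16n^2$. Its two roots in $m^2$ are then $-1$ and $-\bigl((n-1)/(n+1)\bigr)^2$, so $\Delta(R)$ factors cleanly as $(1+n)^2(m^2+1)\bigl((1+n)^2m^2+(1-n)^2\bigr)$. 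Setting each factor to zero yields the remaining alternatives $n=-1$, $m=\pm\imath$, and $m=\pm\imath(n-1)/(n+1)$ for $n\neq -1$, exactly as stated.

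As a consistency check one recovers the quoted formula for $\mathrm{disc}(Q)$ at a stroke: writing $R(U)=(U-\alpha)(U-\beta)$, the cubic discriminant equals $((n^2-\alpha)(n^2-\beta))^2(\alpha-\beta)^2=R(n^2)^2\,\Delta(R)=n^4\,\Delta(R)$, which reproduces $n^4(1+n)^2(1+m^2)((1-n)^2+(1+n)^2m^2)$. The only delicate step is the clean factorization of $\Delta(R)$; once the discriminant-in-$m^2$ trick is spotted, the rest is routine bookkeeping and no real obstacle remains.
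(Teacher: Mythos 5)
Your proposal is correct. The paper states this lemma ``without proof as it is straightforward,'' so there is nothing to compare against; your argument supplies exactly the expected verification. The decomposition into the two cases (double root of the quadratic factor $R$, or $U=n^2$ being a root of $R$) is the natural one given the displayed factorization $Q(U)=(U-n^2)R(U)$. Your identity $R(n^2)=n^2$ checks out, as does the factorization
\begin{equation*}
\Delta(R)=(1+n)^2\,(m^2+1)\,\bigl((1+n)^2 m^2+(1-n)^2\bigr),
\end{equation*}
and the consistency check $\mathrm{disc}(Q)=R(n^2)^2\,\Delta(R)=n^4\,\Delta(R)$ correctly reproduces the discriminant formula quoted in the paper. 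One small remark: the case $n=-1$ is not exhausted by case~(b) (where only $n=0$ arises), but it is absorbed into case~(a) via the $(1+n)^2$ factor of $\Delta(R)$ --- indeed at $n=-1$ one has $R(U)=U^2$ --- so your case split does cover all the listed alternatives.
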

We quote the lemma below by sketching its proof.
\begin{lemma}
\label{cl:Threeroots} Let $\gt(x) = x^3 + Ax^2 + Bx + C \in
\mbb{R}[x]$ be a cubic polynomial. Then $\gt(x)$ has three distinct
real roots if and only if
\begin{itemize}
\item $A^2-3B > 0$\\
\item $\gt(\frac{-A+\sqrt{A^2-3B}}{3})\gt(\frac{-A-\sqrt{A^2-3B}}{3})$\\
$=  \frac{1}{27}\bigg(-A^2B^2+4B^3+4A^3C-18ABC+27C^2\bigg)<0$\\
\end{itemize}
\end{lemma}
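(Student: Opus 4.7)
The plan is to analyze the cubic by its derivative and then identify the product of its critical values with the stated polynomial expression. Write $\gt'(x) = 3x^2+2Ax+B$, whose discriminant is $4(A^2-3B)$. So $\gt'$ has two distinct real roots iff $A^2-3B>0$, in which case they are exactly $x_{\pm} = \frac{-A\pm\sqrt{A^2-3B}}{3}$. Since the leading coefficient of $\gt$ is positive, $\gt$ has a local maximum at $x_{-}$ and a local minimum at $x_{+}$, and is strictly monotone on each of $(-\ify,x_{-}),(x_{-},x_{+}),(x_{+},\ify)$.

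For the forward direction, assume $\gt$ has three distinct real roots $\ga_1<\ga_2<\ga_3$. Rolle's theorem produces a root of $\gt'$ in each of $(\ga_1,\ga_2)$ and $(\ga_2,\ga_3)$, so $\gt'$ has two distinct real roots and hence $A^2-3B>0$. These critical points must be $x_{-}\in(\ga_1,\ga_2)$ and $x_{+}\in(\ga_2,\ga_3)$, and since $\gt$ vanishes at $\ga_2$ with opposite signs on either side, the local maximum value $\gt(x_{-})$ is strictly positive while the local minimum value $\gt(x_{+})$ is strictly negative, making the product negative.

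For the converse, assume both conditions hold. Then one of $\gt(x_{\pm})$ is positive and the other negative; together with $\gt(-\ify)=-\ify$ and $\gt(\ify)=\ify$ and the monotonicity of $\gt$ on the three intervals determined by $x_{-}<x_{+}$, the intermediate value theorem yields exactly one root of $\gt$ in each of $(-\ify,x_{-}),(x_{-},x_{+}),(x_{+},\ify)$, so $\gt$ has three distinct real roots.

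It remains to verify the algebraic identity
\equa{
\gt(x_{+})\gt(x_{-}) = \tfrac{1}{27}\bigl(-A^2B^2 + 4B^3 + 4A^3C - 18ABC + 27C^2\bigr).
}
This is a routine computation that I would carry out by performing the polynomial division $\gt(x) = \tfrac{1}{3}\bigl(x+\tfrac{A}{3}\bigr)\gt'(x) + R(x)$ where $R(x)$ is linear in $x$; since $\gt'(x_{\pm})=0$, we have $\gt(x_{\pm}) = R(x_{\pm})$, so $\gt(x_{+})\gt(x_{-})=R(x_{+})R(x_{-})$ is symmetric in $x_{+},x_{-}$ and therefore expressible through the elementary symmetric functions $x_{+}+x_{-} = -\tfrac{2A}{3}$ and $x_{+}x_{-} = \tfrac{B}{3}$ given by Vieta. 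Collecting terms recovers the claimed expression, which is, up to a factor of $-\tfrac{1}{27}$, the classical discriminant of $\gt$. The sole obstacle in the whole argument is this final algebraic simplification; it is mechanical but error-prone, and is the only genuinely computational step.
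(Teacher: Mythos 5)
Your proposal is correct and takes essentially the same approach as the paper's proof, which is a brief sketch invoking Rolle's theorem and the observation that the cubic must take opposite signs at the two critical points of the derivative. You flesh out that sketch in more detail (explicitly locating the critical points, running both implications, and outlining the Vieta-based verification of the discriminant identity), but the underlying argument is the same.
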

\begin{proof}
Between two distinct reals roots of a polynomial function there is a root of its derivative as a consequence
of Rolle's Theorem. The above lemma follows by observing that there are two real roots of the derivative of
the cubic where the values of the cubic itself have different signs.
\end{proof}
In the case of the polynomial \equ{Q(U) =
(U-n^2)(U^2+(n+1)(m^2(n+1)-n+1)U-m^2n^2(1+n)^2)}

we note that for real values of $m,n$
\begin{itemize}
\item $A^2-3B =\bigg(1-n^2+n^4+2m^2(1+n)^2(1+n^2)+m^4(1+n)^4\bigg)>0$\\
for all $m,n \in \mbb{R}$\\
\item The discriminant of $Q(U)$ is
\equ{\big((1+m^2)n^4(1+n)^2)((-1+n)^2+m^2(1+n)^2\big) > 0} except in
the cases $n=0,-1$ or in the case where $m=0 \text{ and } n=1$ in
which the value is zero.
\end{itemize}

Now we quote the following lemma without
proof.(cf.~\cite{Husemoller} Chapter $0$, Section $7$ Proposition
$7.2 \& 7.3$ Chapter $9$ Section $4$ Theorem $4.3$)
\begin{lemma}
~\\
Let $\gt(x) = x^3 + Ax^2 + Bx + C \in \mbb{R}[x]$ be a cubic
polynomial. Suppose $\gt(x)$ has three distinct real roots. Let $E$
be the elliptic curve defined by $y^2 = x^3 + Ax^2 + Bx + C$. Then
the real locus $E(\mbb{R})$ is isomorphic to $S^1 \times
\mbb{Z}/2\mbb{Z}$.
\end{lemma}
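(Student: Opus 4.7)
The plan is to analyse the real locus directly from the factorization of $\gt(x)$. Since $\gt(x)$ is a monic cubic in $\mbb{R}[x]$ with three distinct real roots, I can write $\gt(x)=(x-\ga_1)(x-\ga_2)(x-\ga_3)$ with $\ga_1<\ga_2<\ga_3$. Because the leading coefficient is positive, the sign analysis gives $\gt(x)\geq 0$ precisely on $[\ga_1,\ga_2]\cup[\ga_3,\infty)$, with equality only at the three roots. Consequently the affine real curve $\{(x,y)\in\mbb{R}^2\mid y^2=\gt(x)\}$ splits into two pieces: a bounded oval sitting over the interval $[\ga_1,\ga_2]$, and an unbounded arc over $[\ga_3,\infty)$.

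Next I would argue the topology of each piece. For the oval, the map $x\mapsto(x,\pm\sqrt{\gt(x)})$ traces a smooth simple closed curve as $x$ runs from $\ga_1$ to $\ga_2$ and back; since the curve is smooth (the partial derivative with respect to $y$ vanishes only at the three roots, where the partial in $x$ does not, because the roots are simple), this oval is a $1$-dimensional compact connected smooth submanifold, hence diffeomorphic to $S^1$. For the unbounded arc I would pass to the projective completion $E\subs\mbb{P}^2(\mbb{R})$: the single point at infinity $[0:1:0]$ compactifies the two ends of the branch over $[\ga_3,\infty)$ into one closed curve, again a smooth $1$-manifold, hence also diffeomorphic to $S^1$. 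Therefore $E(\mbb{R})$ is topologically a disjoint union of two circles.

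To identify the group structure, I would invoke that $E(\mbb{R})$ is a compact abelian $1$-dimensional Lie group under the chord-tangent law (the origin being the point at infinity in the identity component). The classification of such groups forces each component to be $S^1$, and the component group is $\mbb{Z}/2\mbb{Z}$; consequently $E(\mbb{R})\cong S^1\times\mbb{Z}/2\mbb{Z}$ as topological groups. Since the lemma is quoted from Husemoller's book, the natural shortcut is to cite that the number of connected components of $E(\mbb{R})$ equals $1$ or $2$ according as the discriminant of $\gt$ is negative or positive, and in our present case the discriminant is strictly positive by hypothesis.

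The only real obstacle is the passage to the projective closure and verifying smoothness at the single point at infinity; this is routine once one writes the curve in affine charts around $[0:1:0]$, but it is the one place where one cannot simply read off the answer from the affine picture. Everything else is elementary sign analysis together with the standard structure theorem for compact abelian Lie groups of dimension one.
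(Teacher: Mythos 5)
Your proposal is correct and supplies exactly the standard argument that the paper does not reproduce: the paper's ``proof'' of this lemma is simply a citation to Husemoller (Chapter 0, Propositions 7.2--7.3 and Chapter 9, Theorem 4.3), and your sign analysis of the cubic, identification of the bounded oval and the unbounded branch compactified at $[0:1:0]$, and appeal to the classification of compact one-dimensional abelian Lie groups is precisely the content of that reference. One point worth spelling out if you wanted to be fully self-contained is why the extension $0\to S^1\to E(\mbb{R})\to\mbb{Z}/2\mbb{Z}\to 0$ splits (so that you get a direct product rather than just a two-component group): either invoke divisibility of $S^1$, or observe directly that the two-torsion point $(\alpha_1,0)$ lies on the oval and hence generates a complement to the identity component.
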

\begin{proof}
This proof is standard as given in the reference.
\end{proof}

So here for $m,n \in \mbb{R}$ the real locus of the elliptic curve
$E(m,n)=\{(U,W)\in \mbb{R}^2 \mid
W^2=U^3+A(m,n)U^2+B(m,n)U+C(m,n)\}$ is isomorphic to $S^1 \times
\mbb{Z}/2\mbb{Z}$ except for the cases $n=0,n=-1$ and the case
$m=0,n=1$ in which the curve is singular.

We quote this lemma without proof as it is straight forward.
\begin{lemma}
\label{cl:JInvariant} Let $r_i \in \mbb{C}, i = 1,2,3$ be distinct
complex numbers. Then the $j$-invariant of the elliptic curve $y^2 =
(x-r_1)(x-r_2)(x-r_3) = x^3+ax^2+bx+c$ is given by
\begin{equation}
\label{eq:NumDenJInvariant}
\begin{aligned}
j-invariant &=  const\frac{(a^2-3b)^3}{a^2b^2-4b^3-4a^3c+18abc-27c^2}\\
            &=  const\frac{(r_1^2+r_2^2+r_3^2-r_1r_2-r_2r_3-r_3r_1)^3}{(r_1-r_2)^2(r_2-r_3)^2(r_3-r_1)^2}
\end{aligned}
\end{equation}
\end{lemma}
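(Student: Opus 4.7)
The plan is to reduce the general cubic $y^2 = x^3 + ax^2 + bx + c$ to short (depressed) Weierstrass form and then invoke the standard closed-form expression for the $j$-invariant. First I would substitute $x \mapsto x - a/3$, which eliminates the quadratic term and puts the curve in the form $y^2 = x^3 + Ax + B$ with $A = -(a^2 - 3b)/3$ and $B = (2a^3 - 9ab + 27c)/27$, both obtained by direct expansion.

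Next I would apply the classical formula $j = 1728 \cdot 4A^3/(4A^3 + 27B^2)$ for the $j$-invariant of a depressed Weierstrass cubic. The numerator immediately yields $4A^3 = -4(a^2-3b)^3/27$, producing the cubed factor $(a^2 - 3b)^3$ in the statement. For the denominator I would verify $4A^3 + 27B^2 = -\Delta$, where $\Delta = a^2 b^2 - 4 b^3 - 4 a^3 c + 18 abc - 27 c^2$ is the discriminant of $x^3 + ax^2 + bx + c$. This can be established by direct polynomial expansion, or more conceptually by combining invariance of the cubic discriminant under translation with the well-known identity $\Delta_{\mathrm{depressed}} = -4A^3 - 27B^2$. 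After cancellation this gives $j = 256 \, (a^2-3b)^3/\Delta$, which is the first of the two claimed equalities (the constant $256$ is absorbed into the ``const'' of the statement).

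For the second equality I would pass from coefficients to roots using Vieta's formulas $a = -(r_1+r_2+r_3)$, $b = r_1r_2+r_2r_3+r_1r_3$, $c = -r_1 r_2 r_3$. A short symmetric-function calculation yields $a^2 - 3b = r_1^2 + r_2^2 + r_3^2 - r_1 r_2 - r_2 r_3 - r_3 r_1$, rewriting the numerator. The denominator is handled by the classical identity $\Delta = (r_1-r_2)^2(r_2-r_3)^2(r_3-r_1)^2$ for the discriminant of a monic cubic in terms of its roots, which can be derived via the resultant $\mathrm{Res}(Q, Q')$ or by comparing both sides as symmetric polynomials of total degree $6$ in $r_1, r_2, r_3$. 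The hypothesis that the $r_i$ are distinct ensures $\Delta \neq 0$, so $j$ is well-defined. The only real care in the proof is with signs and the overall constant; there is no conceptual obstacle.
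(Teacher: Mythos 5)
Your proposal is correct. The paper states this lemma and explicitly omits the proof (``We quote this lemma without proof as it is straight forward''), so there is no argument in the paper to compare against; your write-up simply supplies the standard argument. Each step checks: the Tschirnhaus shift $x\mapsto x-a/3$ gives $A=-(a^2-3b)/3$ and $B=(2a^3-9ab+27c)/27$; the classical formula $j=1728\cdot 4A^3/(4A^3+27B^2)$ together with the translation-invariance of the cubic discriminant yields $j=256\,(a^2-3b)^3/\Delta$ with $\Delta=a^2b^2-4b^3-4a^3c+18abc-27c^2$, matching the stated expression with $\mathrm{const}=256$; and Vieta's formulas convert $a^2-3b$ and $\Delta$ into the stated symmetric expressions in the roots, with the hypothesis of distinctness guaranteeing $\Delta\neq 0$. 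This is exactly the proof the author had in mind when declaring the lemma straightforward, and it is complete as written.
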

In the current case the non-constant $j$-invariant function $j(m,n)
\in \mbb{Q}(m,n)$ given by
\begin{equation*}
\begin{aligned}
&j(m,n)=const*\frac{\big(1-n^2+n^4+2m^2(1+n)^2(1+n^2)+m^4(1+n)^4\big)^3}
{\big((1+m^2)n^4(1+n)^2)((-1+n)^2+m^2(1+n)^2\big)}
\end{aligned}
\end{equation*}
Hence the elliptic variety $V=\{E(m,n)(\mbb{C})\mid (m,n) \in
\mbb{C}\} \lra \{(m,n) \in \mbb{C}^2\}$ has a nonconstant
$j-$invariant as a function of $(m,n)$. In Lemma~\ref{cl:Threeroots}
we have given a condition for a real cubic to have three distinct
roots in terms of the numerator and denominator polynomials of the
$j$-invariant appearing in equation~(\ref{eq:NumDenJInvariant}).

\section{Density of Rational Points on a Family of Elliptic Curves}

Consider the variety $V(m,n)(\mbb{K})$ for a field
$\mbb{K}=\mbb{Q},\mbb{R},\mbb{C}$ defined by the equation
\equa{&V(m,n)(\mbb{K})=\{(U,W)\in \mbb{K}^2 \mid
W^2=U^3+A(m,n)U^2+B(m,n)U+C(m,n)\}\\
&\text{and define }V(\mbb{K})=\us{(m,n)\in
\mbb{K}}{\bigcup}V(m,n)(\mbb{K})} with $A,B,C$ as in the previous
section.

We know that $C(m,n)$ is a square in $\mbb{Q}(m,n)$ from
equation~(\ref{eq:Coefficients}). We establish the density theorem:

\begin{theorem}
\label{theorem:RationalDensity} The set \equa{\mcl{D}_{\mbb{A}} &=
\{kP_1(m,n)\in V(\mbb{Q}) \mid k \in \Z,
(m,n) \in \mbb{A}^2_{\mbb{Q}}-\text{(discriminant locus)-\{m=0\}}\\
&\subs \mbb{A}^2_{\mbb{R}}-\text{(discriminant locus)-\{m=0\}}\}} is
dense in $V(\mbb{R})$ in both Zariski and usual topologies on
$V(\mbb{R})$.
\end{theorem}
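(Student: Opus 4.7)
The plan is to identify an explicit rational point $P_1(m,n)$ on each non-singular fibre, show that its multiples are dense in the real component containing $P_1$, and then lift this fibrewise density to $V(\mbb{R})$ using Theorem~\ref{Theorem:DensityFibrations}.

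First I would exhibit a natural rational point. Since the constant term of $Q(U)$ is $C(m,n)=\bigl(n^2(1+n)m\bigr)^2$, a perfect square in $\mbb{Q}(m,n)$, the point
\equ{P_1(m,n)=\bigl(0,\,n^2(1+n)m\bigr)}
lies on $V(m,n)$. The factorization in \eqref{eq:Factorization} also gives the obvious $2$-torsion point $(n^2,0)$, which is useless for density; the point $P_1$ is what does the work. Outside the discriminant locus described in Lemma~\ref{lemma:RepeatedFactors} and the slice $\{m=0\}$, the cubic has three distinct real roots by the computation following Lemma~\ref{cl:Threeroots}, so $V(m,n)(\mbb{R})\cong S^1\times\Z/2\Z$. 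On such a real elliptic curve, a point of infinite order generates a dense subgroup of its connected component, and, since $P_1(m,n)$ has $U$-coordinate $0$ which lies between the middle and the largest root for generic $(m,n)$, its orbit $\{kP_1\}$ is dense in the whole of $V(m,n)(\mbb{R})$ once $P_1$ is non-torsion; a short sign analysis of $Q(U)$ at $U=0$ pinpoints the component of $P_1$.

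The crux is to show that $P_1$ is of infinite order on a dense set of rational parameters $(m,n)$. I would proceed by specialization: since $j(m,n)\in\mbb{Q}(m,n)$ is non-constant (as computed at the end of the previous section), the family is non-isotrivial, so Silverman's specialization theorem applies. It suffices therefore to exhibit one rational specialization $(m_0,n_0)$ at which $P_1(m_0,n_0)$ has infinite order, and the set of $(m,n)\in\mbb{Q}^2$ for which $P_1$ becomes torsion is then contained in a thin set (in the sense of Serre) and in particular is not dense in $\mbb{R}^2$. A concrete check at a single rational point—computing $2P_1,\,3P_1,\,\ldots,\,12P_1$ using the chord–tangent law and verifying none of them equals the identity (using Mazur's theorem to bound possible torsion orders by $12$)—will establish the base case. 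The thin-set complement is dense in $\mbb{A}^2_{\mbb{R}}$, so there is a dense set of parameters on whose fibres $P_1$ is of infinite order.

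Finally, I would assemble the two density statements using Theorem~\ref{Theorem:DensityFibrations} applied to the projection $V\lra\mbb{A}^2_{\mbb{Q}}\subs\mbb{A}^2_{\mbb{R}}$. The local product structure holds on the open dense complement of the discriminant locus (where the equation $W^2=Q(U)$ is a smooth family, locally trivial in the $C^\infty$ sense), and the fibrewise density of $\{kP_1\}$ in $V(m,n)(\mbb{R})$ has just been established. The theorem then yields density of $\mcl{D}_{\mbb{A}}$ in $V(\mbb{R})$ in the usual topology, and Zariski density follows a fortiori. The main obstacle is the infinite-order verification for $P_1$ in the generic fibre; once that computation is in hand, the rest follows from the specialization principle and the abstract density lemma already proved in the appendix.
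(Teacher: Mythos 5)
Your overall architecture matches the paper exactly: find a generic rational point, show its multiples are fibrewise dense in the real locus, verify infinite order, and then lift via the fibration density lemma (Theorem~\ref{Theorem:DensityFibrations} / Lemma~\ref{lemma:FirstDensityLemma}). Two substantive things differ, and there is also a pair of minor slips worth fixing.

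The most significant difference is the infinite-order step. You invoke Silverman's specialization theorem together with a thin-set argument, whereas the paper never appeals to Silverman; it uses Mazur's torsion bound directly. Concretely, the paper first shows $P_1$ is a polynomial point of infinite order (Lemma~\ref{cl:PolynomialPoint}) by combining the computer check at $(m_0,n_0)=(1,2)$ with the observation that, for each fixed slice, an identity $kP_1=-P_1$ would by Mazur's theorem force $lP_1=-P_1$ for some $l\in\{1,\ldots,9,11\}$, and each such relation is a polynomial identity in one variable that either holds identically or has finitely many roots. The same degree argument, repeated in the proof of Theorem~\ref{theorem:DensityLineAOC}, gives the finer statement that the bad parameter set is finite in each coordinate slice -- precisely the $F_m$ and $F_n$ needed later. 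Your Silverman route is a legitimate alternative, but Silverman's specialization theorem is stated for elliptic surfaces over a one-dimensional base, so applying it to $\mbb{A}^2$ requires either slicing (at which point you have recovered the paper's argument) or a higher-dimensional generalization; moreover, your inference chain ``contained in a thin set $\Rightarrow$ not dense in $\mbb{R}^2$ $\Rightarrow$ complement dense'' is not airtight as written -- thin sets \emph{can} be topologically dense (e.g.\ the squares among the nonnegative rationals), so what you actually need is density of the complement, which is true but requires a counting or Hilbert-irreducibility argument you do not supply. The paper's Mazur argument is more elementary, self-contained, and delivers exactly the quantitative control on exceptional parameters used downstream.

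Two smaller points. First, the point you write down, $(U,W)=(0,\,n^2(1+n)m)$, is the paper's $P_2$, not $P_1$; the paper's $P_1$ sits at $U=n^2-1$ with $W=m(1+n)$. Since $P_1+P_2=P_3$ is $2$-torsion the two generate the same group modulo torsion, so density is unaffected, but the theorem's statement names $P_1$ specifically, and the paper explicitly tracks $P_1$ through the $(x,y)$--$(U,W)$ transformation because it corresponds to $x=0,\ y=1$ on the quartic. Second, your sign analysis is reversed: the roots of $Q(U)$ are ordered $r_1<0<r_2<n^2$, so $U=0$ (and likewise $U=n^2-1$) lies between the \emph{smallest and middle} root, which places the point on the oval where $Q\geq0$; ``between the middle and largest root'' is the open interval where $Q<0$ and no real point exists. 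Your conclusion that the point is on the oval is correct, only the stated reason is inverted.
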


Towards the proof, we start by noting that
\begin{equation*}
\begin{aligned}
(W,U)   &= (\pm \frac{(p^3-4pq+8r)}{64},0)\\
            &= (\pm n^2(1+n)m,0)\\
            &= (\pm Cot^2(\gb)(1+Cot(\gb))Cot(\gth),0)
\end{aligned}
\end{equation*}
are two polynomial points on the elliptic variety defined by
$V(m,n)$ over $\mbb{Q}(m,n)$. Since $y=\pm 1, x = 0$ is a solution
to the equation~(\ref{eq:Quartic}) we obtain
\begin{equation*}
\begin{aligned}
(W,U) = (\frac{\pm (r-p)}{8},\frac{-(p^2-4q+8)}{16})\\
(W,U) = (\frac{\pm (r+p)}{8},\frac{-(p^2-4q-8)}{16})
\end{aligned}
\end{equation*}
as the polynomial points on $V$. Consider the polynomial points
\begin{equation*}
\begin{aligned}
P_1(m,n)    &= (\frac{p-r}{8},\frac{-(p^2-4q+8)}{16})\\
                    &= (m(1+n),n^2-1)\\
                    &= (Cot(\gth)(1+Cot(\gb)),Cot^2(\gb)-1)\\
P_2(m,n)    &= (\frac{p^3-4pq+8r}{64},0)\\
                    &= (n^2(1+n)m,0)\\
                    &= (Cot^2(\gb)(1+Cot(\gb))Cot(\gth),0)\\
P_3(m,n)    &= (\frac{p+r}{8},\frac{-(p^2-4q-8)}{16})\\
                    &= (0,n^2)\\
P_4(m,n)    &= (\frac{r-p}{8},\frac{-(p^2-4q+8)}{16})\\
                    &= (-m(1+n),n^2-1)\\
                    &= -P_1(m,n)
\end{aligned}
\end{equation*}

The point $P_3(m,n)$ corresponding to $y=-1,x=0$ is a $2$-torsion
polynomial point on $V$ over $\mbb{Q}(m,n)$. This follows from
equation~(\ref{eq:Factorization}) because $U = n^2$ is a root of the
polynomial $Q(U)$. The points on the elliptic curve corresponding to
the roots of $Q$ and the identity gives rise to the torsion subgroup
$\Z/2\Z \times \Z/2\Z$.

So we prove that the points $P_1,P_4,P_2$ are points of infinite
order in $X(m,n)$ over $\mbb{Q}(m,n)$.

Now we state a very important theorem due to Mazur on torsion orders
of points on an elliptic curve over $\mbb{Q}$.
\begin{theorem}[Mazur's Theorem]
Let $C$ be a non-singular rational cubic curve over $\mbb{K} =
\mbb{C}$ or $\bar{\mbb{Q}}$, and suppose that $C(\mbb{Q})$ contains
a point of finite order $m$. Then either
\begin{equation*}
1 \leq m \leq 10 \text{ or } m = 12
\end{equation*}
\end{theorem}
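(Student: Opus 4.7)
The plan is to translate the statement into a question about rational points on modular curves. The key observation is that an elliptic curve $E/\mbb{Q}$ equipped with a rational point $P$ of exact order $m$ is classified up to $\mbb{Q}$-isomorphism by a non-cuspidal rational point on the modular curve $X_1(m)$. Thus the theorem is equivalent to showing that $Y_1(m)(\mbb{Q}) = \es$ for $m = 11$ and for every $m \geq 13$ other than $m = 12$. Because of the multiplicativity of the structure of $E(\mbb{Q})_{\text{tors}}$ and the divisibility relations among the $Y_1(m)$, it suffices to establish the emptiness for $m$ a prime power above a small threshold together with a short list of composites.

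For the allowed values $1 \leq m \leq 10$ and $m = 12$, I would first verify that $X_1(m)$ is a rational curve with non-cuspidal rational points, so these orders really do occur; the Tate normal form gives explicit one-parameter families of elliptic curves realizing each torsion. For the first excluded values $m \in \{11,14,15\}$, the curve $X_1(m)$ is itself an elliptic curve of Mordell--Weil rank zero over $\mbb{Q}$, and a direct computation of its group structure (by $2$-descent, or by reading off torsion from a Weierstrass model) shows that all of its rational points are cusps. For $m = 16, 18, 20, 21, 24, 25$, a similar but slightly more involved analysis of low-genus modular curves suffices.

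The main obstacle, and the heart of Mazur's original $1977$ argument, is ruling out the remaining prime torsion orders $p \geq 17$, where $X_1(p)$ has genus at least $2$. Faltings' theorem yields only ineffective finiteness, so a finer tool is required. Mazur's approach is to study the Jacobian $J_0(p)$, introduce the Eisenstein ideal $\mcl{I}$ inside the Hecke algebra, and analyze the Eisenstein quotient $\ti{J} := J_0(p)/\mcl{I}J_0(p)$. One shows that $\ti{J}(\mbb{Q})$ is finite and is generated by the image of the cuspidal divisor $(0) - (\ify)$; a descent argument then forces any hypothetical non-cuspidal rational point of $X_0(p)$, and hence of $X_1(p)$, to coincide with a cusp, yielding a contradiction. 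Reproducing this machinery is far beyond the scope of a short proof, so the practical plan is to invoke Mazur's theorem as a black box and refer the reader to the original paper for the technical core.
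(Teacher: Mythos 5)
The paper itself gives no proof of this statement: Mazur's theorem is quoted as a known deep result and used as a black box in the arguments that follow (to bound possible polynomial torsion orders of $P_1(m,n)$). Your sketch is an accurate high-level summary of Mazur's actual 1977 argument --- the reduction to rational points on $X_1(m)$, the elementary treatment of the small genus cases $m\in\{11,14,15\}$ and the remaining low-genus composites via rank computations, and the Eisenstein ideal / Eisenstein quotient of $J_0(p)$ for primes $p\geq 17$ --- and your practical conclusion (that reproducing this is far beyond the scope of a short proof and one should invoke it as a black box with a citation) is exactly what the paper does. So there is no discrepancy in approach; you simply supplied context that the paper omits. One minor remark: the phrasing of the theorem in the paper, ``cubic curve over $\mbb{K}=\mbb{C}$ or $\bar{\mbb{Q}}$'' together with ``$C(\mbb{Q})$,'' is a slightly nonstandard rendering of the usual statement (an elliptic curve defined over $\mbb{Q}$ with a rational torsion point); your reformulation in terms of $E/\mbb{Q}$ and $Y_1(m)(\mbb{Q})$ is the cleaner and more standard one and is what actually gets used.
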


We observe that $P_1,P_2,P_3$ lie on the line $W + (1+n)m U =
n^2(1+n)m$. So $P_1+P_2 = P_3$ or $P_1+P_2+P_3 = O$. It is enough to
show that the polynomial point $P_1=-P_4$ is of infinite order. Using the computer, 
we can check that $P_1$ does not have polynomial torsion
order $1,2,\ldots,10,12$.

We will show this by a very simple computation using computer that
$kP_1 \neq -P_1$ by explicitly showing that there are points $(m,n)$
where $kP_1(m,n) \neq -P_1(m,n)$ for various $k =
1,2,3,\ldots,10,11$.

The initial point is given by $P_1(m,n) = (m(1+n),n^2-1)$. The
elliptic variety is given by the equation
\begin{equation*}
W^2 - U^3
-(1-2n^2+m^2(1+n)^2)U^2-n^2(n+1)((n-1)-2m^2(n+1))U-m^2n^4(1+n)^2 = 0
\end{equation*}

For a generic point $(m,n) \in \mbb{A}^2_\mbb{K}$ the tangent at
$P_1(m,n)$ meets the elliptic curve $E(m,n)$ at $-2P_1(m,n) =
\bigg(-\frac{(n-1)\big((n-1)^2 + 2
m^2(1+n^2)\big)}{8m^3},\frac{(n-1)^2+4m^2n^2)}{4m^2}\bigg).$

Let $(x_0,y_0),(x_1,y_1)$ lie on the cubic whose equation is given
by $Y^2 = X^3 + \gl X^2 + \gm X + \gn$. Then the line determined by
the points $(x_0,y_0)$ and $(x_1,y_1)$ meets the cubic curve again
at the point $(X,-Y)$ whose values are given by
\begin{equation*}
\begin{aligned}
X &= -\gl + \bigg(\frac{y_1-y_0}{x_1-x_0}\bigg)^2-x_1-x_0\\
Y &= -y_0-(X-x_0)\bigg(\frac{y_1-y_0}{x_1-x_0}\bigg).
\end{aligned}
\end{equation*}
Note that under the elliptic curve addition $(x_0,y_0) + (x_1,y_1) =
(X,Y)$.

Let $k > 0$ be a positive integer. Let $(x,y)=(x(m,n),y(m,n))$ be a
multiple of $P_1$ say $kP_1$. Then the $X$-coordinate $X(m,n)$ and
the $Y$-coordinate $Y(m,n)$ of the multiple $(k+1)P_1$ are given by

\begin{equation}
\label{eq:Iteration}
\begin{aligned}
X[x,y] &= -(1-2n^2+m^2(1+n)^2) + \bigg(\frac{y-m(1+n)}{x-n^2+1}\bigg)^2-n^2+1-x\\
Y[x,y] &=
-m(n+1)-(X[x,y]-n^2+1)\bigg(\frac{y-m(1+n)}{x-n^2+1}\bigg).
\end{aligned}
\end{equation}
Using a computer we can check that the multiples of $P_1$ as
rational functions in $\mbb{Q}(m,n)$. However we will find a
suitable point $(m,n) \in \mbb{A}^2_\mbb{K}$ such that the value
$kP_1(m,n)$ differs from $-P_1(m,n)$ for $k = 1,\ldots,11$ there by
showing that $P_1$ cannot have polynomial torsion
$1,2,\ldots,10,12$.

\begin{lemma}
Let $(m_0,n_0)=(1,2)$. The point $P_1(m_0,n_0)=(3,3)$ on
$E(m_0,n_0)(\mbb{Q})$ is a point of infinite order.
\end{lemma}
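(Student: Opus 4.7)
The plan is to specialize the family $E(m,n)$ at $(m_0, n_0) = (1, 2)$ and verify directly that the resulting rational point has infinite order. Substituting $(m,n) = (1,2)$ into the coefficient formulas~(\ref{eq:Coefficients}) yields $A(1,2) = 2$, $B(1,2) = -60$, $C(1,2) = 144$, so
\equ{E(1,2) : W^2 = U^3 + 2U^2 - 60U + 144,}
and the identity $27 + 18 - 60 + 144 = 9 = 3^2$ confirms that $P_1(1,2) = (3,3)$ lies on this curve.

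By Mazur's theorem any rational torsion point of $E(1,2)(\mbb{Q})$ has order in the finite set $\{1, 2, \ldots, 10, 12\}$. Hence to prove infinite order it suffices to iterate the chord--tangent formula~(\ref{eq:Iteration}) specialized at $(m,n) = (1,2)$, computing $2P_1, 3P_1, \ldots, 10P_1, 12P_1$ in turn, and check that each multiple is a finite affine rational point rather than the identity $O$. Each step is pure rational arithmetic, performable by hand or by computer as already indicated in the excerpt; producing a finite point at every stage rules out all twelve possible Mazur orders and yields infinite order.

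A more economical argument uses reduction modulo primes of good reduction. The discriminant of the cubic $U^3 + 2U^2 - 60U + 144$ factors as $2^6 \cdot 3^2 \cdot 5$, so every prime $p \geq 7$ coprime to it is good. A direct point count gives $|E(1,2)(\mbb{F}_7)| = 8$ and $|E(1,2)(\mbb{F}_{13})| = 20$, and since the rational torsion subgroup injects into each $E(1,2)(\mbb{F}_p)$, its order divides $\gcd(8, 20) = 4$. The cubic factors over $\mbb{Q}$ as $(U-4)(U^2+6U-36)$ with the quadratic factor irreducible (its discriminant $180$ is not a rational square), so the rational $2$-torsion is exactly $\{O, (4,0)\}$; combined with the bound of $4$, this forces $E(1,2)(\mbb{Q})_{\mrm{tors}}$ to be of order $1$, $2$, or $4$. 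Since $P_1 = (3,3)$ has $W \neq 0$ it is not $2$-torsion, and a single application of the doubling formula (tangent slope $(3 \cdot 9 + 4 \cdot 3 - 60)/(2 \cdot 3) = -7/2$) yields $2P_1 = (17/4, 11/8) \neq (4, 0)$, which rules out $4$-torsion. Therefore $P_1(1,2)$ has infinite order.

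The only genuine obstacle is the rational-arithmetic bookkeeping; Mazur's theorem reduces the problem to a finite check, and the reduction-mod-$p$ shortcut trims that check further to a single group-law doubling.
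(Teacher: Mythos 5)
Your first route---specialize at $(1,2)$, verify $P_1=(3,3)$ lies on $W^2=U^3+2U^2-60U+144$, then use Mazur's theorem to reduce to checking that $kP_1\neq O$ for the finitely many admissible torsion orders by iterating the group law---is essentially identical to the paper's proof, which tabulates $kP_1$ for $k=1,\ldots,11$ and observes none equals $-P_1$. Your second route, reduction modulo good primes, is a genuinely different and more efficient argument that the paper does not use. The paper's brute-force list carries the explicit coordinates of all eleven multiples (the numerators swelling to forty-odd digits by $k=11$), whereas your approach needs only two small finite-field point counts ($|E(\mbb{F}_7)|=8$, $|E(\mbb{F}_{13})|=20$, $\gcd=4$), the factorization $(U-4)(U^2+6U-36)$ with irreducible quadratic factor to see the rational $2$-torsion is $\{O,(4,0)\}$, and a single doubling $2P_1=(17/4,11/8)\neq(4,0)$ to rule out order $4$. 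The discriminant $2880=2^6\cdot3^2\cdot5$ and the two point counts check out, as does the doubling. The paper's method buys uniformity (pure group-law arithmetic, easy to delegate to a computer and transparent to someone who knows nothing beyond the chord--tangent construction); your reduction argument buys brevity and hand-verifiability, at the cost of invoking the injectivity of torsion under reduction at odd good primes. Both proofs are correct.
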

\begin{proof}
The two parameter family of cubic curves reduces to the following
elliptic curve at the point $(m_0,n_0)= (1,2)$ given by
\begin{equation*}
X(1,2): W^2 = (-4+U) (-36+6 U+U^2).
\end{equation*}
There are three distinct real values for $U$ where $W$ is zero. They
are given by $U=4,U =3(-1-\sqrt{5}),U=3(-1+\sqrt{5})$.

Consider the point $P_1(m_0,n_0)= (3,3)$ on the elliptic curve
$E(m_0,n_0)(\mbb{Q})$. Using Equation~\ref{eq:Iteration} we compute
the multiples $kP_1(m_0,n_0)$ for
\equ{k \in \{1,2,3,4,5,6,7,8,9,10,11\}.} They are given as follows.\\
\newline
\begin{center}
$\bm{MULTIPLES\ \ OF\ \ P_1(m_0,n_0)}$
\end{center}
\begin{itemize}
\item $P_1(m_0,n_0)  = (W=3,U=3)$,\\
\item $2P_1(m_0,n_0) = (W=\frac{11}{8},U =\frac{17}{4})$,\\
\item $3P_1(m_0,n_0) = (W=-\frac{2091}{125},U =-\frac{189}{25})$,\\
\item $4P_1(m_0,n_0) = (W=-\frac{740943}{85184},U=\frac{11713}{1936})$,\\
\item $5P_1(m_0,n_0) = (W=-\frac{774296133}{1647212741},U=\frac{5104323}{1394761})$,\\
\item $6P_1(m_0,n_0) = (W=\frac{27508807641557}{338608873000},U=\frac{923701649}{48580900})$,\\
\item $7P_1(m_0,n_0) = (W=\frac{3530515935858140877}{285838253719954489},U=-\frac{61622709117}{433923895441})$,\\
\item $8P_1(m_0,n_0) = (W=-\frac{6468618165127547413697}{277205865051779043899904},U=\frac{17006294967389953}{4251429122504256})$,\\
\item $9P_1(m_0,n_0) = (W=-\frac{3133684517758753884882526375341}{268943929702576480749933159625},U=\frac{5746975304186971011}{41665298499035050225})$,\\
\item $10P_1(m_0,n_0)= (W=-\frac{21637825704318812407875118259091920491}{226567277774013103139189522148402968}$,\\
                        $U=\frac{7830395115762668512371857}{371647707091770699565924}$,\\
\item $11P_1(m_0,n_0)= (W=\frac{599341435809994228534143420075705642493847013}{1126066400833062513831952039757204874476841269}$,\\
$U=\frac{3948440455789942950949604475843}{1082370632513496730007602575721})$
\end{itemize}

We observe that $kP_1(m_0,n_0) \neq -P_1(m_0,n_0)$ for all $k \in
\{1,2,\ldots,11\}$ as $P_1(m_0,n_0) \neq -P_1(m_0,n_0)$ and none of
the values for $U$ simplifies to $3$ for $k\geq 2$.
\end{proof}

Now we prove the following lemma.

\begin{lemma}
\label{cl:PolynomialPoint} The point $P_1(m,n)$ is a polynomial
point of infinite order.
\end{lemma}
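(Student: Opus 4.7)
The plan is to combine Mazur's theorem with a specialization argument. Since Mazur's theorem restricts the possible torsion orders of a rational point on an elliptic curve over $\mathbb{Q}$ to the set $\{1,2,\ldots,10,12\}$, to prove that the polynomial section $P_1(m,n)$ has infinite order in the Mordell--Weil group over $\mathbb{Q}(m,n)$ it suffices to exhibit \emph{one} rational specialization $(m_0,n_0)$ where $P_1(m_0,n_0)$ has no torsion order in $\{1,2,\ldots,10,12\}$.

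First I would invoke the preceding lemma for the chosen value $(m_0,n_0)=(1,2)$, which gave $P_1(1,2)=(3,3)$ together with the explicit tabulation of $kP_1(1,2)$ for $k=1,2,\ldots,11$. That tabulation shows $kP_1(1,2)\neq -P_1(1,2)$ in each case, equivalently $(k+1)P_1(1,2)\neq O$ for $k=1,\ldots,11$. Combined with the obvious fact that $P_1(1,2)\neq O$, this rules out every torsion order in $\{1,2,\ldots,12\}$. By Mazur's theorem, $P_1(1,2)$ therefore has infinite order in $E(1,2)(\mathbb{Q})$.

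Second I would apply the standard specialization principle for elliptic surfaces: if $(m_0,n_0)\in\mathbb{A}^2_\mathbb{Q}$ lies outside the discriminant locus described in Lemma~\ref{lemma:RepeatedFactors}, the reduction map
\[
E(\mathbb{Q}(m,n))\longrightarrow E(m_0,n_0)(\mathbb{Q})
\]
is a group homomorphism sending $P_1(m,n)$ to $P_1(m_0,n_0)$. Since the discriminant $n^4(1+n)^2(1+m^2)\bigl((1-n)^2+(1+n)^2m^2\bigr)$ evaluated at $(1,2)$ is a nonzero rational, the specialization at $(1,2)$ is defined. Now if $P_1(m,n)$ were torsion of order $N$ over $\mathbb{Q}(m,n)$, the homomorphism would force $NP_1(1,2)=O$ in $E(1,2)(\mathbb{Q})$, contradicting the fact just established that $P_1(1,2)$ has infinite order. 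Hence $P_1(m,n)$ is a polynomial point of infinite order.

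The only point that requires any real work is the explicit verification of the multiples $kP_1(1,2)$ for $k\leq 11$, but that is precisely the computer-algebra content of the preceding lemma, which we are entitled to invoke; everything else is a direct appeal to Mazur's theorem and the specialization homomorphism. There is no genuine conceptual obstacle.
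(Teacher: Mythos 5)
Your proof is correct, but it is structured quite differently from the paper's. The paper argues by contradiction: it supposes $kP_1=-P_1$ holds as a polynomial identity for some $k>11$, fixes $n_0$, notes that for each $m$ Mazur forces $lP_1(m,n_0)=-P_1(m,n_0)$ for some $l\in\{1,\ldots,9,11\}$, then runs a pigeonhole argument (infinitely many $m$, finitely many $l$) to promote this to a polynomial identity in $m$, and repeats the pigeonhole in $n_0$ to get a polynomial identity in $(m,n)$, contradicting the computed multiples. You instead go straight through the specialization homomorphism: verify once that $P_1(1,2)$ has infinite order in $E(1,2)(\mathbb{Q})$ (the tabulated multiples together with Mazur do this), note that $(1,2)$ is off the discriminant locus so $E(\mathbb{Q}(m,n))\to E(1,2)(\mathbb{Q})$ is a homomorphism sending $P_1(m,n)\mapsto P_1(1,2)$, and conclude that no relation $NP_1(m,n)=O$ can hold generically. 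This is shorter and avoids the double pigeonhole entirely; what the paper's route buys, perhaps, is that it never needs to name the specialization homomorphism as such, working instead only with Mazur-and-counting. Both arguments ultimately rest on the same two ingredients --- Mazur's bound on rational torsion and the explicit computation of $kP_1(1,2)$ for $k\le 11$ --- but your packaging of them is the more standard and more transparent one. One very small caveat: your phrase ``rules out every torsion order in $\{1,2,\ldots,12\}$'' is doing the work of the computation, and then Mazur is what excludes any \emph{larger} torsion order; the two steps should be kept in that order, as you in fact do.
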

\begin{proof}
We have seen $P_1$ does not have polynomial torsion
$1,2,\ldots,10,12$. Suppose $kP_1=-P_1$ for some $k>11$. Fix an $n_0
\in \mbb{K}$. Let $m \in \mbb{K}$. We have
$kP_1(m,n_0)=-P_1(m,n_0)$. By Barry Mazur's theorem on torsion
orders of rational points on elliptic curves over rationals,
$lP_1(m,n_0)=-P_1(m,n_0)$ for some  $l \in \{1,2,\ldots, 9,11\}$.
There are finitely many choices for $l$ and infinitely many choices
for $m \in \mbb{K}$. Hence there exists $l \in \{1,2,\ldots,9,11\}$
such that $lP_1(m,n_0)=-P_1(m,n_0)$ for infinitely many $m \in
\mbb{K}$. This means that the equality $lP_1(m,n_0) = -P_1(m,n_0)$
holds as polynomial points in $m$ for the fixed $n_0$. By a similar
argument, again we have for some $l \in \{1,2,\ldots,9,11\}$,
$lP_1(m,n_0)=- P_1(m,n_0)$ for infinitely many $n_0 \in \mbb{K}$ as
polynomial points in $m$. Hence we get $lP_1(m,n) = -P_1(m,n)$ as
polynomial points in $m,n$. However we have verified that $lP_1 \neq
-P_1$ arriving at a contradiction. Now
Lemma~\ref{cl:PolynomialPoint} follows.
\end{proof}

The discriminant locus is defined as the set $\{(m,n) \in \mbb{C}^2
\mid disc(Q(U))=0\}$. Now we state the density lemma for an elliptic
curve.
\begin{lemma}[Density Lemma for an Elliptic Curve]
\label{cl:DensityOfPolynomialPoint} The set

\equa{\mcl{D} &=
\{kP_1(m_0,n_0)\mid k \in \mbb{Z}, P_1(m_0,n_0) \text{ of infinite order
in }E(m_0,n_0)(\mbb{R}),\\ &(m_0,n_0) \in
\mbb{A}^2_{\mbb{Q}}-\text{(discriminant locus)}\}}
is dense in both Zariski and usual topologies in the real locus
$E(m_0,n_0)(\mbb{R})$ of the jacobian elliptic variety $V(\mbb{R})$.
\end{lemma}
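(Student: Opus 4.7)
The plan is to combine two ingredients already in hand: the real-Lie-group structure of the nonsingular real elliptic curve, and the infinite order of the polynomial point $P_1$. First I would invoke the structure theorem quoted earlier (after Lemma~\ref{cl:Threeroots}): for $(m_0,n_0)$ off the discriminant locus, precisely where the cubic $Q(U)$ has three distinct real roots, the real locus $E(m_0,n_0)(\mbb{R})$ is topologically $S^1\times\mbb{Z}/2\mbb{Z}$. Write $E^0$ for its identity component, so $E^0\cong S^1$ and $E(m_0,n_0)(\mbb{R})/E^0\cong\mbb{Z}/2\mbb{Z}$.

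Given the hypothesis that $P_1(m_0,n_0)$ has infinite order, the subgroup $H=\langle P_1(m_0,n_0)\rangle$ is infinite. Its topological closure $\overline{H}$ is a closed, hence compact, subgroup of $E(m_0,n_0)(\mbb{R})$. Intersecting with the identity component gives a closed subgroup $\overline{H}\cap E^0$ of $S^1$; since the index $[\overline{H}:\overline{H}\cap E^0]\leq[E(\mbb{R}):E^0]=2$ and $\overline{H}$ is infinite, this intersection is an infinite closed subgroup of $S^1$. The classical fact that the only closed subgroups of $S^1$ are the finite cyclic groups and all of $S^1$ now forces $\overline{H}\cap E^0=E^0$. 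Hence $\overline{H}\supseteq E^0$, and depending on whether $P_1(m_0,n_0)\in E^0$, the closure equals either $E^0$ or all of $E(m_0,n_0)(\mbb{R})$. In the latter case the multiples of $P_1$ are dense in the full real locus; in the former case they are dense in the identity component, which is the only component they can meet.

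For Zariski density the argument is even shorter: an infinite subset of an irreducible one-dimensional algebraic curve is automatically Zariski dense, since the proper Zariski-closed subsets of a smooth projective curve are finite, and $H$ is infinite under our hypothesis. The main subtlety, as opposed to difficulty, lies in tracking which components of $E(m_0,n_0)(\mbb{R})$ contain multiples of $P_1(m_0,n_0)$; this in turn boils down to checking whether $P_1(m_0,n_0)$ sits in the identity component, which can be read off from the signs of $U-r_i$ where the $r_i$ are the real roots of $Q(U)$. Everything else is pure topological-group theory once the Lie-group structure and the infinite-order hypothesis are in place.
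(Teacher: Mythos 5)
Your argument mirrors the paper's approach — identity component is $S^1$, infinite subgroup of $S^1$ is dense, infinite subset of a curve is Zariski dense — but stops exactly at the step the paper actually has to work for, and that step is not optional. The lemma asserts density in the \emph{full} real locus $E(m_0,n_0)(\mbb R)\cong S^1\times\mbb Z/2\mbb Z$, not merely in the identity component $E^0$. Your dichotomy (``$\overline H$ equals either $E^0$ or all of $E(\mbb R)$, depending on whether $P_1\in E^0$'') is correct as far as it goes, but you leave the dichotomy unresolved and only remark that it ``can be read off from the signs of $U-r_i$.'' Until that sign check is actually done the proof does not deliver the stated conclusion: if $P_1$ were in $E^0$, the closure would be $E^0$, a proper subset, and the lemma would be false as stated.

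The paper closes this gap by an explicit computation. Writing $Q(U)=(U-n^2)\bigl(U^2+(n+1)(m^2(n+1)-n+1)U-m^2n^2(1+n)^2\bigr)$, it evaluates the quadratic factor at $U=n^2-1$ (the $U$-coordinate of $P_1$) and at $U=0$ (the $U$-coordinate of $P_2$), finding the values $-m^2(n+1)^2$ and $-m^2n^2(1+n)^2$, both $\le 0$. Hence each of $n^2-1$ and $0$ lies between the two roots of the quadratic factor, and both are $\le n^2$ (the root of the linear factor). In any ordering of the three real roots this forces the $U$-coordinates into the bounded interval $[r_1,r_2]$, i.e.\ $\pm P_1$ and $\pm P_2$ lie on the oval component $S^1\times\{-1\}$, not on $E^0$. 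Only then does the ``closure is everything'' branch of your dichotomy apply. You should add that computation; without it the proof of density in the full locus is incomplete.
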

\begin{proof}
This is straight forward because any infinite subgroup of the circle
group is dense. However here we need to observe that the following
claim holds.
\begin{claim}
For $(m_0,n_0) \in \mbb{A}^2_{\mbb{R}}-$(the discriminant locus),
the points $\pm P_1(m_0,n_0)$, $\pm P_2(m_0,n_0)$ lie on the oval
component of the real locus. If $P_1(m_0,n_0)$ is of infinite order
in $E(m_0,n_0)(\mbb{R})$ then the subgroup generated by each $P_1$
is a dense subgroup of $S^1 \times \mbb{Z}/2\mbb{Z}$ in both Zariski
and usual topologies on $S^1\times \mbb{Z}/2\mbb{Z}$.
\end{claim}
\begin{proof}
Note that for real $m,n$, the quadratic factor
$(U^2+(n+1)(m^2(n+1)-n+1)U-m^2n^2(1+n)^2)$ of $Q(U)$ evaluated at
$U=0,U=n^2-1$ is non-positive as it is a square polynomial in $n,m$
with a negative sign. So each of the values $U=0,U=n^2-1$ lies
between the roots of the quadratic factor of $Q(U)$. We also have
that the $U$-coordinates of $\pm P_1(m,n),\pm P_2(m,n)$ is less than
or equal to $n^2$ which is the root of the linear factor $U-n^2$ of
$Q(U)$. Hence we observe that the points $\pm P_1(m,n),\pm P_2(m,n)$
lie on the oval.  So under the isomorphism of the real locus
$E(m,n)(\mbb{R})$ to $S^1 \times \mbb{Z}/2\mbb{Z}$, the points $\pm
P_1(m,n),\pm P_2(m,n) \in S^1 \times \{-1\}$ which does not have the
identity element of the group $S^1 \times \mbb{Z}/2\mbb{Z}$. So the
infinite subgroup generated by $P_1$ is a dense subgroup of $S^1
\times \mbb{Z}/2\mbb{Z}$ in both Zariski and usual topologies on
$S^1\times \mbb{Z}/2\mbb{Z}$. This proves the claim.
\end{proof}
Hence the lemma follows.
\end{proof}
\begin{lemma}[Density Bijection Lemma for the Quartics and Cubics]
The set $\{(x,y) \in \mbb{Q}^2, (x,y) \text{ satisfies
equation~}(\ref{eq:Quartic})\}$ is dense in the quartic defined by
the equation~(\ref{eq:Quartic}) in Zariski and usual topologies when
$n \neq 0,-1$ and $m \neq 0$.
\end{lemma}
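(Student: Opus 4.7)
The plan is to transfer density from the elliptic (cubic) curve to the quartic by means of the explicit birational transformation~(\ref{eq:xyUVTransfomation}) constructed in the previous lemma.

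First, I would observe that both the forward map $\Phi : (x,y) \mapsto (U,V)$ and its inverse $\Phi^{-1} : (U,V) \mapsto (x,y)$ in~(\ref{eq:xyUVTransfomation}) are rational maps whose coefficients are polynomial expressions in $m,n$ with rational coefficients. Consequently, when $(m,n) \in \mbb{Q}^2$ lies in the smooth locus (outside the discriminant locus, with $m \neq 0$ and $n \neq 0,-1$), $\Phi$ restricts to a bijection of sets between the rational points of the quartic and those of the cubic that lie away from the common indeterminacy locus, which is cut out by $U = 0$ on the cubic side and by $y - x^2 - 2(1+n)mx + 1 - 2n^2 = 0$ on the quartic side. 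Each of these loci meets the corresponding real curve in only finitely many points.

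Second, I would invoke Lemma~\ref{cl:DensityOfPolynomialPoint} applied at the chosen $(m,n)$: under the standing hypotheses the cyclic subgroup generated by $P_1(m,n) = (m(1+n),\, n^2 - 1)$ is dense in the real locus $E(m,n)(\mbb{R}) \cong S^1 \times \mbb{Z}/2\mbb{Z}$. Because $P_1(m,n)$ has coordinates in $\mbb{Q}[m,n]$ and $(m,n) \in \mbb{Q}^2$, every multiple $kP_1(m,n)$ is a rational point on the cubic. Pulling back through $\Phi^{-1}$ (discarding the finitely many multiples that happen to lie in the indeterminacy set) then yields a countable collection of rational points on the quartic.

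Third, I would argue that this collection is dense. Since $\Phi^{-1}$ is continuous on its domain of definition, the image of a dense subset is dense in the image, and removing the finite exceptional set does not disturb density; thus density on the real elliptic locus transfers to density on the real quartic locus. Zariski density follows automatically, because the set of points produced is infinite and the quartic is an irreducible algebraic curve over $\mbb{Q}$.

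The main obstacle is the bookkeeping of the real topology on each side. One must verify that $\Phi^{-1}$ sends the two connected components of $E(m,n)(\mbb{R}) \cong S^1 \times \mbb{Z}/2\mbb{Z}$ surjectively (modulo the finite indeterminacy set) onto the real components of the quartic, so that no part of the real quartic fails to be in the closure of the image. This should follow from the fact that a birational map between smooth projective curves of genus one extends to an isomorphism of their smooth projective models, combined with a direct inspection of the signs in the explicit formulas of~(\ref{eq:xyUVTransfomation}) on each real component.
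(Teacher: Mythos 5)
Your proof takes essentially the same route as the paper's: transfer density from the cubic to the quartic via the explicit birational transformation of~(\ref{eq:xyUVTransfomation}), after removing the finite indeterminacy set on each side (the two points $\pm P_2(m,n)$ with $U=0$ on the cubic, and the single corresponding point on the quartic), and then invoke continuity. The real-component concern you flag at the end is exactly the delicate point that the paper also treats loosely --- its proof only concludes density ``in its real locus components corresponding under bijection to the real locus components of the cubic'' rather than on the whole real quartic as the lemma's statement would suggest --- so your instinct to scrutinize whether $\Phi^{-1}$ hits every real branch of the quartic is well placed.
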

\begin{proof}
The case $U=0,V=0$ occurs when $y=x^2+\frac p2x - \frac{p^2-4q}{8}$
and if $U=0$ then $W= \pm \frac{(p^3-4pq+8r)}{64} = \mp n^2(1+n)m
\neq 0$.

Using $x-y, U-V$ transformations~(\ref{eq:xyUVTransfomation}) we get
that upon removing the points $(W,U) = \pm P_2(m,n) = (\pm
\frac{(p^3-4pq+8r)}{64},0) = (\pm n^2(1+n)m,0) = (\pm
Cot^2(\gb)(1+Cot(\gb))m,0)$ from the cubic and the point $(x_0,y_0)
= \bigg(\frac{(p^2-4q)^2-64}{8(p^3-4pq+8r)}=-\frac{n-1}{2m}$,
$\frac{(n-1)^2+4m^2n^2}{4m^2}\bigg)$ from the quartic we get a
bijection between the complex solutions of the cubic and quartic.
This bijection restricts to a bijection of their real locus and also
rational locus if $m,n$ are rational. Hence we get the density of
rational solutions $(x,y)$ of the quartic in its real locus
components corresponding under bijection to the real locus
components of the cubic. This completes the proof.
\end{proof}

In order to complete the proof of
Theorem~\ref{theorem:RationalDensity} we give a topological
criterion for the density which is proved in
section~\ref{sec:Appendix}. After giving this criterion for the
density of sets in arbitrary topological spaces we establish
Theorem~\ref{theorem:RationalDensity}.

\section{Main Theorem on Density of Points on a Line at a Rational Distance from Three Collinear Points}
\label{sec:MainTheoremOnDensity} Finally we prove
Theorem~\ref{theorem:DensityLineAOC} on the density of points on a
line which are at a rational distance from three collinear points in
the Euclidean plane from which we deduce rational approximability of
quadrilaterals in the next section.

\begin{proof}
The conditions~\ref{condition:Rational} are satisfied by the
rational points on the quartic. In order to get the required density
to complete the proof of Theorem~\ref{theorem:DensityLineAOC} we use
Mazur's Theorem again in the following way.

First we observe that if for an $m_0 \in \mbb{Q}$,
$kP_1(m,n)=-P_1(m,n)$ for all $n$ then $lP_1(m_0,n) = -P_1(m_0,n)$
for all $n$ for some $l \in \{1,\ldots,9,11\}$. There are finitely
many such $m_0 \in \mbb{Q}$ for $l \in \{1,\ldots,9,11\}$. Let $F_m$
denote the finite set of such elements $m_0 \in \mbb{Q}$. Given any
rational value $m_0$ for $m$ apart from a finite subset $F_m \subs
\mbb{Q}$ with $F_{\measuredangle} = Cot^{-1}(F_m) = \{arcCot(m)\mid
m \in F_m\}$, there exist finitely many $n \in \mbb{Q}$ such that
$kP_1(m_0,n) = -P_1(m_0,n)$ for any integer $k$ because we need to
check only for the finitely many possible torsion order values for
$k$ by Mazur's Theorem. Similarly given any rational value $n_0$ for
$n$ apart from a finite subset $F_n \subs \mbb{Q}$ with $F_{ratio} =
\frac{1}{F_n}$, there exist finitely many $m \in \mbb{Q}$ such that
$kP_1(m,n_0) = -P_1(m_0,n)$ for any integer $k$. Hence for a given
rational value $m_0 \nin F_m$ for $m$, $P_1(m_0,n)$ is a point of
infinite order for all but finitely many $n$ and for a given
rational value $n_0 \nin F_n$ for $n$, $P_1(m,n_0)$ is a point of
infinite order for all but finitely many $m$.

This proves the Theorem.
\end{proof}

\section{Rational Approximability of Quadrilaterals}

\begin{theorem}
\label{th:RationalQuadrilateral} Let $X =\{A,B,C,D\}$ represent four
vertices of a quadrilateral in the Euclidean plane. Then given $\gep
> 0$ there exists a rational quadrilateral $X_{\gep}$ in the plane
such that $D(X,X_{\gep}) < \gep$.
\end{theorem}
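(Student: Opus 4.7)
The plan is to apply Theorem~\ref{theorem:DensityLineAOC} to the two diagonals of the quadrilateral. Assume first that $ABCD$ is convex, so the diagonals $AC$ and $BD$ meet at a point $O$ strictly inside segment $AC$ (and inside $BD$). The non-convex case reduces to the same setup: if $D$ is the reflex vertex lying inside $\Gd ABC$, then the ray from $B$ through $D$ still exits $\Gd ABC$ through the open segment $AC$ at a point $O$, so $B, D, O$ remain collinear with $O$ in the interior of $AC$; the measure-zero configuration $AC \parallel BD$ is avoided by an infinitesimal preliminary perturbation of $B$ or $D$.

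Using Lemma~\ref{lemma:Density} I would pick an angle $\gth' \notin F_{\measuredangle}$ close to the angle between lines $BD$ and $AC$ with both $\sin\gth'$ and $\cos\gth'$ rational. Next, I would fix a direction close to that of $AC$, pick $O'$ on that line close to $O$, and choose rational lengths $a' \approx AO$ and $c' \approx OC$ whose ratio $a'/c'$ avoids the finite set of ``bad'' ratios associated to $\gth'$ by Theorem~\ref{theorem:DensityLineAOC}. Placing $A', C'$ on the two sides of $O'$ along this direction at distances $a', c'$ yields a rational collinear 3-set $\{A',O',C'\}$ close to $\{A,O,C\}$ with $A'C'=A'O'+O'C' \in \mbb{Q}$. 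Let $L'$ be the line through $O'$ making angle $\gth'$ with $A'C'$.

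By Theorem~\ref{theorem:DensityLineAOC}, the set of points $P \in L'$ for which $\{A',P,C',O'\}$ is a rational 4-set is dense in $L'$. I would select $B' \in L'$ close to $B$ and $D' \in L'$ close to $D$, both from this dense set. Then $A'B', B'C', A'D', C'D', O'B', O'D'$ are all rational from the two 4-set conditions; $A'C'$ is rational by construction; and $B'D'$ equals $O'B'+O'D'$ or $|O'B'-O'D'|$ depending on the configuration, hence is rational. So $X_{\gep}=\{A',B',C',D'\}$ is a rational 4-set. The area of the quadrilateral decomposes as the (signed) sum of the areas of $\Gd A'B'C'$ and $\Gd A'C'D'$, equal to $\frac{1}{2} A'C' \cdot O'B' \sin\gth'$ and $\frac{1}{2} A'C' \cdot O'D' \sin\gth'$ respectively; both are rational, and so is the total area.

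The principal subtlety is coordinating the two finite exclusion sets $F_{\measuredangle}$ and $F_{ratio}$ of Theorem~\ref{theorem:DensityLineAOC}: these are not independent, but the structure of the theorem allows one to fix $\gth' \notin F_{\measuredangle}$ first and then avoid only finitely many values of the ratio $a'/c'$, leaving uncountably many admissible choices in any neighborhood of $AO/OC$. Taking each of the approximations above within a small enough fraction of $\gep$ (controlled by the Lipschitz dependence of pairwise distances on the four vertex positions) gives $D(X,X_{\gep}) < \gep$, completing the proof.
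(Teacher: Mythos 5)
Your proposal is correct, and it takes a genuinely leaner route than the paper's. Both proofs hinge on Theorem~\ref{theorem:DensityLineAOC} applied at the intersection $O$ of the diagonals, but the scaffolding around it differs. The paper's proof proceeds in three steps: (i) apply Theorem~\ref{Th:RationalTriangle} to replace $\Gd AOB$ by a rational triangle $\Gd A'OB'$ with rational area (keeping $O$ fixed, so that the angle at $O$ automatically has rational sine and cosine and is steered away from $F_{\measuredangle}$); (ii) use the hyperbola result Theorem~\ref{lemma:DensityLinePQ} to locate $C'$ on the line $A'O$ at rational distance from both $O$ and $B'$, again dodging $F_{ratio}$; (iii) apply Theorem~\ref{theorem:DensityLineAOC} once to locate $D'$. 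Your construction skips the triangle theorem and the hyperbola lemma entirely: you choose an admissible angle $\gth'\in C_{\mbb{Q}}\setminus F_{\measuredangle}$ directly from Lemma~\ref{lemma:Density}, lay down the collinear rational $3$-set $\{A',O',C'\}$ by fiat (rationals being dense and $F_{ratio}$ finite), and then invoke Theorem~\ref{theorem:DensityLineAOC} \emph{twice} on the same line $L'$, once for $B'$ and once for $D'$. This symmetric double application is what eliminates the need for Theorem~\ref{lemma:DensityLinePQ}: each application delivers all the distances from one new vertex to $A',C',O'$, and $B'D'$ is then a sum or difference of $O'B'$ and $O'D'$. Your area computation via the two triangles $\Gd A'B'C'$ and $\Gd A'C'D'$ (rather than the four triangles around $O$) is equally valid, and your treatment of the concave case --- noting that $D$ lies between $B$ and the exit point $O$ on $AC$, so $B'$ and $D'$ land on the same ray from $O'$ --- is the same reduction the paper makes, just stated more cleanly. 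One small remark: for a simple quadrilateral the diagonal lines always meet (convex case interior to both segments, concave case as you argue), so the preliminary perturbation against $AC\parallel BD$ you mention as a safeguard is not actually needed. The paper also disposes of the degenerate collinear configurations; under a strict reading of ``quadrilateral'' these do not arise, so omitting them is harmless.
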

\begin{proof}
We prove this theorem in a few steps. We rename the vertices of the
quadrilateral such that if $\Gd ABD$ is the triangle formed by three
out of four vertices such that if the quadrilateral is concave then
the point $C$ is in the interior of $\Gd ABD$ or on the $\Gd ABD$
and if the quadrilateral is convex then it lies in the exterior.

We give a slightly elaborate proof in the convex quadrilateral case
and give a less elaborate but similar proof in the concave case.

So consider the case of a convex quadrilateral $\square ABCD$.

$\bm{Step: 1}$

Let the diagonals $AC$,$BD$ meet at a point $O$. We assume that
$\measuredangle AOB$ is greater than or equal to $\frac{\gp}{2}$
(i.e. just right or obtuse) by renaming the vertices $\{A,B,C,D\}$
of the quadrilateral so that in the $\Gd AOB$, the side $AB$ is the
largest side. Now we approximate $\Gd AOB$ using
Theorem~\ref{Th:RationalTriangle} by a rational triangle with
rational area $\Gd A'O B'$ such that
$D(\{A,O,B\},\{A',O,B'\})<\gd_1$. Given a $\gd_2>0$, we note that by
suitably choosing smaller $\gd_1$ if necessary we can assume that
$D(\{A,O,B\},\{A',O,B'\})$, $<\gd_2$ and $d(C,\text{ Line
}A'O)<\gd_2$. Since $\Gd A'OB'$ is rational with rational area we
have that both the sine and the cosine of the angles $\measuredangle
A'OB',\measuredangle OA'B',\measuredangle OB'A'$ are rational.

While obtaining an approximant $\Gd A'OB'$ for the triangle $\Gd
AOB$ by using Theorem~\ref{Th:RationalTriangle}, we make sure that
the mentioned angles in Theorem~\ref{Th:RationalTriangle}
$\measuredangle OA'B'=\frac {\gp}{2}-\ga_1,\measuredangle
OB'A'=\frac{\gp}{2}-\gb_1$ are so chosen that
$\ga_1,\gb_1,\ga_1+\gb_1,\frac{\gp}{2}-\ga_1,\frac{\gp}{2}-\gb_1,\gp-(\ga_1+\gb_1)$
are not in $F_{\measuredangle} \cup (\gp-F_{\measuredangle})$ which
is a finite set where this finite set $F_{\measuredangle}$ arises in
Theorem~\ref{theorem:DensityLineAOC}. Hence $\measuredangle A'OB'
\nin F_{\measuredangle} \cup (\gp-F_{\measuredangle})$ where
$F_{\measuredangle}$ is the set described in
Theorem~\ref{theorem:DensityLineAOC}.

$\bm{Step: 2}$

Using Lemma~\ref{lemma:DensityLinePQ} again by suitably choosing
$\gd_1,\gd_2$ we can find a point $C'$ on the line $A'O$ such that
$D(\{A,B,C\},\{A',B',C'\})<\gd_3$. We also make sure the choice of
$C'$ on the line $A'O$ is such that the ratio $\frac{A'O}{OC'}$ is
not one of those finitely many ratios in $F_{ratio}$ corresponding
to the angle $\measuredangle A'OB'$ as described in the
Theorem~\ref{theorem:DensityLineAOC} which may not give density as
per Theorem~\ref{theorem:DensityLineAOC}.

Now we also have that $\Gd B'OC'$ is a rational triangle because the
$3$-set $\{B',O,C'\}$ is rational and $Sin(\measuredangle B'OC')$ is
rational which follows because $Sin(\measuredangle A'OB')$ is
rational and hence the $\Gd A'B'C'$ is a rational triangle with
rational area.

$\bm{Step: 3}$

Again by suitably choosing $\gd_1,\gd_2,\gd_3 < \gep$ we can assume
that $d(D,\text{ Line }B'O) < \gep$. Since the ratio
$\frac{A'O}{OC'} \nin  F_{ratio}$ using
Theorem~\ref{theorem:DensityLineAOC} because of density we can find
a point $D'$ on the line $B'O$ such that the set
$D(\{A,B,C,D\},\{A',B',C',D'\}) < \gep$ and the set
$\{A',O',C',D'\}$ is a rational set. So the triangles \equ{\Gd
A'OD',\Gd B'OD',\Gd C'OD',\Gd A'OD'} are rational triangles with
rational area. Hence by taking $X_{\gep} = \{A',B',C',D'\}$ we have
that
\begin{itemize}
\item $X_{\gep}$ is a rational set.
\item Area of the quadrilateral $\square A'B'C'D'$ is rational.
\item $D(X,X_{\gep}) < \gep$.
\end{itemize}
In the case when $B,C,D$ are collinear with $C$ in between $B,D$ and
$A$ is outside the line of $B,C,D$ one of the angles $\measuredangle
BCA,\measuredangle DCA$ is greater than or equal to $\frac{\gp}{2}$
(i.e. just right or obtuse). So we use
Theorems~\ref{Th:RationalTriangle},~\ref{theorem:DensityLineAOC} to
find a rational $4-$set $\{A',C,B',D'\}$ with the point $C$ in
common with $X$ which gives the density.

If all four points $\{A,B,C,D\}$ are collinear then we use the
density of rationals in reals for the conclusion.

If the $4-$ set $\{A,B,C,D\}$ form a concave quadrilateral then we
let $C$ be in the interior of the $\Gd ABD$ and let $AC$ intersect
$BD$ at $O'$. Assume $\measuredangle AO'B$ is obtuse or just right
without loss of generality so that $\measuredangle ACB$ is also
obtuse and we apply Theorem~\ref{Th:RationalTriangle} with vertex
$C$ opposite to the largest side $AB$ so that $C$ is in the
approximating rational $3$-set. Then we find $O$ close to $O'$ by
density and now in the proof of Theorem~\ref{theorem:DensityLineAOC}
we run through the argument with the coordinates of the point
$C=(-c,0)$ with $a>c>0$. Here again we conclude density similar to
the convex case by replacing $n$ with $-n$ and $n$ values with $-n$
values. It is the same argument about choosing proper angles for
density purposes not in the associated finite set
$F_{\measuredangle}$ and the distances also such that the ratio is
not from the finite set $F_{ratio}$ one we fix an angle. Hence
Theorem~\ref{th:RationalQuadrilateral} is proved.
\end{proof}
\section{Appendix}
\label{sec:Appendix}
\begin{lemma}
Let $X,Y$ be topological spaces and $f:X \lra Y $. Suppose $f$ is a
closed surjective map and if $A \subs X$ is saturated then $\bar{A}$
is saturated. Let $B\subs Y$ be such  that $\bar{B}=Y$. Let
$A=f^{-1}(B)$. Then $\ol{A}=X$.
\end{lemma}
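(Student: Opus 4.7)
The plan is to exploit the three hypotheses in a short chain: saturation of $A = f^{-1}(B)$, preservation of saturation under closure, and the closed surjectivity of $f$. The overall strategy is to push the density of $B$ upward through $f$ and then pull back along the saturated closure.

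First I would observe that $A = f^{-1}(B)$ is by construction a saturated subset of $X$, i.e.\ $A = f^{-1}(f(A))$, since preimage sets under any map are automatically saturated. By the standing hypothesis on $f$, the closure $\bar{A}$ is then also saturated, so $\bar{A} = f^{-1}(f(\bar{A}))$. This is the key structural fact that will let me identify $\bar{A}$ with the preimage of an explicit subset of $Y$.

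Next I would compute $f(\bar{A})$. Since $f$ is closed, $f(\bar{A})$ is a closed subset of $Y$. By surjectivity, $f(A) = f(f^{-1}(B)) = B$, so
\[
B = f(A) \subseteq f(\bar{A}),
\]
and being closed, $f(\bar{A}) \supseteq \bar{B} = Y$. Hence $f(\bar{A}) = Y$.

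Finally, combining the two observations,
\[
\bar{A} = f^{-1}(f(\bar{A})) = f^{-1}(Y) = X,
\]
which is the desired conclusion. I do not anticipate any serious obstacle here; the only subtlety is the verification that $f(f^{-1}(B)) = B$, which uses surjectivity, and the careful use of the hypothesis that closures of saturated sets are saturated to pass from $f(\bar{A}) = Y$ back to $\bar{A} = X$.
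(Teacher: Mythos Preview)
Your proof is correct and follows essentially the same route as the paper's: show that $f(\bar{A})$ is closed and contains $B$, hence equals $Y$, and then use the saturation of $\bar{A}$ to conclude $\bar{A}=f^{-1}(Y)=X$. The paper phrases the argument for an arbitrary closed set $C\supseteq A$ before specializing to $C=\bar{A}$, but the substance is identical.
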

\begin{proof}
Let $C \subs X$ be any closed set containing $A \subs X$. Then $f(C)
\sups f(A)=B$ and it is closed. So $f(C)=Y$. If $C$ is saturated
then $C=X$. Consider $C=\bar{A}$. By the hypothesis of this lemma
$\bar{A}$ is saturated. Hence $\ol{A}=X$.
\end{proof}
\begin{example}
\begin{enumerate}
\item Let $X=[0,1] \times [0,1]$ and $Y=[0,1]$. Let $f=\gp$ be the first projection.
\item Let $X=\bigg([0,1] \times [0,1]\bigg) \sqcup [2,3]$. Let $f=\gp$ be the first projection on
$[0,1] \times [0,1]$ and $f(t)=1$ if $t \in [2,3]$.
\item Let $X=\bigg([0,1] \times [0,1]\bigg) \cup [1,2] \times \{0\}$. Let $f=\gp$ be the first projection on
$[0,1] \times [0,1]$ and $f(t)=1$ if $t \in [1,2]  \times \{0\}$.
\item Let $X=[0,1],Y=\{0,1\}$ with topology $\{\es,\{0\},\{0,1\}\}$. Define $f:X \lra Y$ as
$f(t)=0$ for $0 \leq t < \frac 12$ and $f(t)=1$ for $\frac 12 \leq t
\leq 1$. Then $f$ is a surjective continuous map. Take $B=\{0\}$.
Then $A=[0,\frac 12)$ which is not dense in $X$.
\end{enumerate}
The cases $2,3$ give examples where we have even if $f$ is a closed
surjective map, if $A$ is saturated then $\ol{A}$ need not be
saturated. Take $A=[0,1) \times [0,1]$. Here in one case $X$ is
disconnected and in other case $X$ is connected.
\end{example}

\begin{lemma}[Local Product Structure Lemma]
\label{lemma:LocalProductStructureLemma}
Let $X,Y$ be topological spaces. Let $f:X \lra Y$ be topological
spaces. Suppose $X$ has the local product structure property with
respect to $f$ on a dense subset $Z \subs X$. Let $B\subs Y$ be such
that $\bar{B}=Y$. Let $A=f^{-1}(B)$. Then $\ol{A}=X$.
\end{lemma}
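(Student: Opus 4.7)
The plan is to prove $\ol{A} = X$ by showing that every nonempty open set $W \subs X$ intersects $A = f^{-1}(B)$. Since $Z$ is dense in $X$, any such $W$ meets $Z$, so I fix a point $x_1 \in W \cap Z$. At this point the local product structure property furnishes an open neighborhood $O$ of $x_1$ in $X$, a topological space $F_{x_1}$, open sets $U \subs F_{x_1}$ and $V \subs Y$ with $y_1 = f(x_1) \in V$, and a homeomorphism $\psi: O \lra U \times V$ such that $f|_O = \pi_2 \circ \psi$.

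The next step is to shrink the neighborhood to stay inside $W$ while retaining the product form. The set $O \cap W$ is open in $O$, hence $\psi(O \cap W)$ is an open subset of $U \times V$ containing $\psi(x_1) = (u_1, v_1)$. By the definition of the product topology, there exist open sets $U' \subs U$ and $V' \subs V$ with $(u_1, v_1) \in U' \times V' \subs \psi(O \cap W)$. In particular $V'$ is nonempty and open in $Y$, since $V$ is open in $Y$.

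Finally I would invoke the density of $B$ in $Y$: as $V'$ is a nonempty open subset of $Y$ and $\ol{B} = Y$, there exists $b \in V' \cap B$. The point $x^{*} = \psi^{-1}(u_1, b)$ then lies in $\psi^{-1}(U' \times V') \subs O \cap W \subs W$ and satisfies $f(x^{*}) = \pi_2(\psi(x^{*})) = b \in B$, so $x^{*} \in W \cap A$. This shows $W \cap A \neq \emptyset$ for an arbitrary nonempty open $W$, hence $\ol{A} = X$.

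There is no substantial obstacle in this argument; the only delicate point is the shrinking step, which is a standard appeal to the basis of the product topology on $F_{x_1} \times Y$ to ensure that intersecting the chart $O$ with $W$ still yields, inside $W$, a saturated open set of the form $\psi^{-1}(U' \times V')$ on which $f$ remains a projection onto an open piece of $Y$. This product structure is exactly what lets the density of $B$ be transferred, via $f$, to density of $A$ in $X$.
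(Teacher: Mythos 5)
Your proof is correct and rests on the same core idea as the paper's: exploit the local product structure chart $\psi\colon O\cong U\times V$ to transfer the density of $B$ in $Y$ into density of $A$ inside $O$, then use density of $Z$ to conclude. The paper phrases it as showing $A\cap O=\psi^{-1}(U\times(V\cap B))$ is dense in $O$ (so $O\subs\ol{A}$, hence $Z\subs\ol{A}$, hence $\ol{A}=X$), while you argue directly that every nonempty open $W$ meets $A$ via the basis of the product topology; these are the same argument with a slightly more explicit shrinking step in yours.
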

\begin{proof}
Let $x \in Z$. Let $(x \in O \subs X,U \subs F_{x},y=f(x) \in V
\subs Y,O \cong_{\psi} U \times V)$ be a local product structure at
$x$. Let $\psi(x)=(u,y) \in U \times V$.  Since $V$ is open we have
$V \cap B$ is dense in $V$. So $\{u\} \times (V \cap B)$ is dense in
$\{u\} \times V$ and we also have $U \times (V \cap B)$ is dense in
$U \times V$ and $\psi^{-1}(U \times (V \cap B)) = A \cap O =
f^{-1}(V \cap B) \cap O$. Hence $x \in O \subs \ol{A}$ which implies
$Z \subs \ol{A}$. So $\ol{A}=X$ and the lemma follows.
\end{proof}
\subsection{Two Applications}
This above lemma can be applied in many instances. In this subsection
below we give two applications.
\begin{lemma}[First Application: Existence of Local Product Structure on Dense Set]
\label{lemma:FAELPSDS}
Let $X=\{(x_1,x_2,\ldots,x_n,y) \in \mbb{R}^n \mid
y^2-P[x_1,x_2,\ldots,x_n]=0$ where $P[x_1, x_2,$
$\ldots,x_n] \in \mbb{R}[x_1,x_2,\ldots,x_{n-1}][x_n]$ a monic polynomial in $x_n$
with coefficients in $\mbb{R}[x_1,x_2,\ldots,x_{n-1}]$. Let the map
$f: X \lra \mbb{R}^{n-1}$ given by $f(x_1,x_2,\ldots,x_n,y)$
$=(x_1,x_2,\ldots,x_{n-1})$ be a sujective map. Then $X$ has the
local product structure property with respect to $f$ on a dense
subset of $X$.
\end{lemma}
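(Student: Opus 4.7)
The plan is to exhibit an explicit dense subset $Z \subseteq X$ on which the local product structure holds with respect to $f$, invoking the implicit function theorem at every point where $y \neq 0$.

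Concretely, I set $Z = \{(x_1, \ldots, x_n, y) \in X : y \neq 0\}$. At any $x_0 = (a_1, \ldots, a_n, b) \in Z$, put $F(x_1, \ldots, x_n, y) = y^2 - P(x_1, \ldots, x_n)$; then $\frac{\partial F}{\partial y}(x_0) = 2b \neq 0$. The implicit function theorem furnishes open neighborhoods $V$ of $(a_1, \ldots, a_{n-1})$ in $\mbb{R}^{n-1}$ and $U$ of $a_n$ in $\mbb{R}$, together with a continuous function $g : V \times U \to \mbb{R}$ with $g(a_1, \ldots, a_{n-1}, a_n) = b$, whose graph $O$ inside $X$ is an open neighborhood of $x_0$. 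Taking $F_{x_0} = \mbb{R}$ and defining $\psi : O \to U \times V$ by $(x_1, \ldots, x_n, y) \mapsto (x_n, (x_1, \ldots, x_{n-1}))$, I obtain a homeomorphism satisfying $f|_O = \pi_2 \circ \psi$ identically, which is exactly the local product structure property at $x_0$.

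It remains to show that $Z$ is dense in $X$, i.e., that every $x_0 = (a_1, \ldots, a_n, 0) \in X \setminus Z$ lies in $\ol{Z}$. For such $x_0$ we have $P(a_1, \ldots, a_n) = 0$. Restricting $P$ to the vertical line through $(a_1, \ldots, a_{n-1})$ yields a monic one-variable polynomial $p(t) = P(a_1, \ldots, a_{n-1}, t)$ of degree $d = \deg_{x_n} P \geq 1$ (the case $d = 0$ being trivial since then $X \cong \mbb{R}^{n-1} \times \{\pm 1\}$). Hence $p$ has only finitely many real zeros, and since $p(t) \to +\infty$ as $t \to +\infty$, the open set $\{t \in \mbb{R} : p(t) > 0\}$ is non-empty. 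For values of $t$ near $a_n$ lying in this open set, the point $(a_1, \ldots, a_{n-1}, t, \sqrt{p(t)}) \in Z$ approximates $x_0$, giving $x_0 \in \ol{Z}$.

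The main obstacle is the last step in the case where $a_n$ is a zero of $p$ of even multiplicity with $p \leq 0$ on a punctured neighborhood of $a_n$ (a local-maximum scenario, e.g.\ $P(x_n) = x_n^4 - x_n^2$ at $x_n = 0$). In such a situation, perturbing only $x_n$ does not produce nearby points in $Z$, and one must also perturb the parameters $a_1, \ldots, a_{n-1}$, appealing to the semi-algebraic structure of $\{P > 0\} \subseteq \mbb{R}^n$ and the surjectivity hypothesis on $f$ to conclude that this open set accumulates at $(a_1, \ldots, a_n)$. For the $P$ appearing in the paper's applications, which come from the quartic $y^2 = x^4 + p x^3 + q x^2 + r x + 1$ with constant term $1$, such local-maximum pathologies do not arise for generic parameters $(m, n)$, and the density argument goes through directly.
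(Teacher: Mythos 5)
Your choice of $Z = \{y \neq 0\}$ and the use of the implicit function theorem to produce the local product charts is essentially the same as the paper's proof. (The paper takes $Z = \{P \neq 0\}$ --- which coincides with $\{y \neq 0\}$ on $X$ --- and builds the chart explicitly by hand via a sign condition on $y$ instead of invoking the implicit function theorem; the two are interchangeable.) What you do differently is attempt to verify that $Z$ is actually dense in $X$, which the lemma's conclusion requires but which the paper's proof never argues at all.

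You are right to be uneasy about that density step: the gap you flag is genuine, not a technicality. Take $n = 2$ and $P(x_1,x_2) = x_2^4 - x_2^2$, which is monic of degree $4$ in $x_2$; $f$ is surjective since $P(x_1,2) = 12 > 0$ for every $x_1$. Then $(0,0,0) \in X$, yet near this point $X$ reduces to the segment $\{(x_1,0,0)\}$, because $P(x_1,x_2) = x_2^2(x_2^2-1) < 0$ for all small $x_2 \neq 0$; hence $(0,0,0) \nin \ol{Z}$ and $Z = \{y \neq 0\}$ is not dense. So neither your argument nor the paper's establishes the lemma as stated. In this particular example the local product structure does still hold on a larger dense set (one may include the one-dimensional stratum $\{(x_1,0,0)\}$, taking the fibre $U$ to be a point), so the lemma itself may be salvageable by enlarging $Z$ via a semi-algebraic stratification argument --- but that is not what either proof does. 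Your closing remark that such pathologies do not occur for the particular $P$ coming from the paper's quartic is a fair observation about the intended application; however, to make the lemma correct in the generality in which it is stated one must either add a hypothesis ruling out such degeneracies or supply the missing density argument for a suitably enlarged $Z$.
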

\begin{proof}
Consider the set \equ{Z= X\bs \{(x_1,x_2,x_3,\ldots,x_{n},y) \in
\mbb{R}^{n+1} \mid P[x_1,x_2,x_3,\ldots,x_{n}] \neq 0\}.}

Then we prove that $Z\subs X$ has local product structure property
at every point. For this purpose let $(x_1^0,x_2^0,\ldots,x_n^0,y^0)
\in Z$. Since $P[x_1^0,x_2^0,x_3^0,\ldots,x_{n}^0]\neq 0$ we have
$y^0 \neq 0$. Hence there exist an open set $V \subs \mbb{R}^{n-1}$
such that $(x_1^0,x_2^0,x_3^0,\ldots,x_{n-1}^0) \in V$ and $\gep >0$
such that $P[x_1,x_2,x_3,\ldots,x_{n}] \neq 0$ for all
$(x_1,x_2,x_3,\ldots,x_{n}) \in V \times (x_n^0-\gep,x_n^0+\gep)$.
Choosing the space
$F_{(x_1^0,x_2^0,x_3^0,\ldots,x_{n}^0,y^0)}=U=(x_n^0-\gep,x_n^0+\gep),
O=\{(x_1,x_2,x_3,\ldots,x_{n},y) \in X \mid
(x_1,x_2,x_3,\ldots,x_{n}) \in V \times U, sign(y)=sign(y_0)\}$ and
we define a map $\psi: O \lra V \times U$ given by
$\psi(x_1,x_2,x_3,\ldots,x_{n},y)=((x_1,x_2,x_3,\ldots,x_{n-1}),x_n)$.
Clearly $O$ is open as the sign condition can be treated as open
condition over the reals. Now the lemma follows.
\end{proof}
\begin{lemma}[First Application: Density]
\label{lemma:FirstDensityLemma}
Let $X=\{(x_1,x_2,\ldots,x_n,y) \in \mbb{R}^n \mid
y^2-P[x_1,x_2,\ldots,x_n]=0$ where $P[x_1, x_2,\ldots,x_n] \in
\mbb{R}[x_1,x_2,\ldots,x_{n-1}][x_n]$ a monic polynomial in $x_n$
with coefficients in $\mbb{R}[x_1,x_2,\ldots,x_{n-1}]$. Let the map
$f: X \lra \mbb{R}^{n-1}$ given by
$f(x_1,x_2,\ldots,x_n,y)=(x_1,x_2,\ldots,x_{n-1})$ be a sujective
map. Let $B \subs \mbb{R}^{n-1}$ be a dense set. Then $A=f^{-1}(B)$
is dense in $X$.
\end{lemma}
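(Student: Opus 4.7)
The plan is to chain the two immediately preceding lemmas, which between them do all the real work. First, the map $f$ is the restriction of a linear coordinate projection $\mbb{R}^{n+1} \lra \mbb{R}^{n-1}$ and is therefore continuous; it is surjective by hypothesis. Second, by Lemma~\ref{lemma:FAELPSDS}, the space $X$ has the local product structure property with respect to $f$ on the dense subset $Z = X \bs \{(x_1,\ldots,x_n,y) \mid P[x_1,\ldots,x_n] = 0\}$.

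With these hypotheses verified, together with the given density $\ol{B} = \mbb{R}^{n-1}$, I would invoke Lemma~\ref{lemma:LocalProductStructureLemma} directly, taking $Y = \mbb{R}^{n-1}$ and the same map $f$. Its conclusion reads $\ol{A} = \ol{f^{-1}(B)} = X$, which is exactly the desired statement.

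Since the substantive content is already contained in the two preceding lemmas, no genuine obstacle remains. The only point deserving a moment's care is to confirm that the local homeomorphism $\psi: O \lra U \times V$ constructed in the proof of Lemma~\ref{lemma:FAELPSDS} (with $U = (x_n^0 - \gep, x_n^0 + \gep)$ a neighborhood in the $x_n$-direction, and the sign of $y$ pinned to $\mrm{sign}(y^0)$) genuinely factors $f$ through the second projection $U \times V \lra V$. This is immediate from the formula $\psi(x_1,\ldots,x_n,y) = ((x_1,\ldots,x_{n-1}), x_n)$, where the first factor, being a neighborhood inside $\mbb{R}^{n-1}$, is what $f$ picks out; so the composition of the two lemmas yields the conclusion without further effort.
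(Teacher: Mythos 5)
Your proposal is correct and matches the paper's own proof, which is just the one-line remark that the result follows by combining Lemma~\ref{lemma:LocalProductStructureLemma} with Lemma~\ref{lemma:FAELPSDS}. The extra care you take in checking that $\psi$ factors $f$ through the projection (and noting the sign-of-$y$ restriction) is a reasonable elaboration of the same argument, modulo a harmless mismatch between the factor ordering $U\times V$ in the definition and $V\times U$ in the proof of Lemma~\ref{lemma:FAELPSDS}.
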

\begin{proof}
Using the previous two lemmas~\ref{lemma:LocalProductStructureLemma},~\ref{lemma:FAELPSDS} this lemma follows. This also proves Theorem~\ref{theorem:RationalDensity}.
\end{proof}
\begin{lemma}[Second Application: Existence of Local Product Structure on Dense Set]
\label{lemma:SAELPSDS}
Let $X=\{(x_1,x_2,\ldots,x_n,y_1,y_2,\ldots,y_m) \in
\mbb{R}^{n+m} \mid F_i[x_1,x_2,\ldots,x_n,$
$y_1,y_2,\ldots,y_m]=0$ for $1 \leq i \leq m$ where
$F_i$ is a polynomial function.$\}$
Suppose $f:X \lra \mbb{R}^n$ given by $(x_1,x_2,\ldots,x_n,y_1,y_2,\ldots,y_m) \lra (x_1,x_2,\ldots,x_n)$ is a surjective map. Then $f$ has the local product structure property.
\end{lemma}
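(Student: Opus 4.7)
The strategy is to invoke the implicit function theorem at every point of $X$ at which the $m \times m$ Jacobian of the defining equations in the $y$-variables is nonsingular. Introduce the polynomial
\[
J(x,y) \;=\; \det\!\left(\frac{\partial F_i}{\partial y_j}(x,y)\right)_{1 \le i,j \le m},
\]
and set $Z = \{(x,y) \in X : J(x,y) \ne 0\}$, which is open in $X$ in the Euclidean topology. The plan is to show first that the local product structure holds at every point of $Z$, and second that $Z$ is dense in $X$.

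For each $(x_0,y_0) \in Z$ the implicit function theorem yields open Euclidean neighborhoods $V \subseteq \mbb{R}^n$ of $x_0$ and $W \subseteq \mbb{R}^m$ of $y_0$, together with a smooth map $\phi: V \to W$ satisfying $\phi(x_0) = y_0$ and $X \cap (V \times W) = \{(x,\phi(x)) : x \in V\}$. Taking $O = X \cap (V \times W)$ and $F_{(x_0,y_0)} = U = \{\ast\}$ a one-point space, the map $\psi: O \to U \times V$ defined by $(x,\phi(x)) \mapsto (\ast,x)$ is a homeomorphism, and by construction $f|_O$ coincides with $\pi_2 \circ \psi$. This verifies the definition of local product structure at $(x_0,y_0)$, so $X$ has the local product structure property at every point of $Z$ with respect to the trivial fibre space.

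For density of $Z$ in $X$, I would argue from the surjectivity of $f$ that $X$ has at least one irreducible component of real dimension $n$ projecting dominantly onto $\mbb{R}^n$. On any such component $J$ cannot vanish identically, since otherwise the projection would be everywhere critical and its image would be lower-dimensional than $\mbb{R}^n$, contradicting dominance. Hence $\{J \ne 0\}$ is a nonempty Zariski-open subset of every dominating component of $X$, and standard real algebraic geometry ensures that such Zariski-open subsets are Euclidean-dense in the regular real locus of an $n$-dimensional irreducible real variety. Lower-dimensional components of $X$ are irrelevant to the surjectivity hypothesis, so after a suitable reformulation (or simply restricting attention to the union of dominating components) one obtains Euclidean density of $Z$ in $X$.

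The principal obstacle is this density step: passing from Zariski-openness of $\{J \ne 0\}$ to Euclidean density on a real algebraic variety is subtler than in the complex setting, since real varieties can harbor lower-dimensional irreducible components and real-singular behavior even at complex-smooth points. I would handle this by a component-by-component analysis, invoking Sard's theorem on each top-dimensional component to rule out pathologies of the projection, and appealing to the standard fact that a Zariski-open subset of the smooth locus of an irreducible real variety of top dimension is Euclidean-dense whenever it meets the real locus at a regular point -- a regular point being supplied precisely by the surjectivity of $f$ combined with the implicit function theorem.
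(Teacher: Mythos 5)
Your construction of $Z$ as the locus where $\det\left(\frac{\partial F_i}{\partial y_j}\right) \ne 0$, the application of the implicit function theorem at each point of $Z$, and the choice of a singleton fibre space so that $\psi$ is a homeomorphism onto $V \times U$ all match the paper's proof exactly, up to the cosmetic swap of $U \times V$ versus $V \times U$. On that part you and the paper are in full agreement.

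Where you diverge is that you attempt to prove density of $Z$ in $X$, and you are right to flag it as the principal obstacle: the paper's own proof simply defines $Z$, verifies the local product structure on it, and stops, never establishing density at all even though the lemma's title asserts ``on Dense Set.'' So you identified a real gap in the paper. Unfortunately, your proposed repair does not close it, and in fact the lemma as stated is false without further hypotheses on the $F_i$. Take $n=m=1$ and $F_1(x,y) = y^2$. Then $X = \{(x,0) : x \in \mbb{R}\}$, the projection $f$ is surjective, yet $J = \partial(y^2)/\partial y = 2y$ vanishes identically on $X$, so $Z = \emptyset$ and is certainly not dense. Your argument that ``$J$ cannot vanish identically on a dominating component, since otherwise the projection would be everywhere critical and its image would be lower-dimensional'' conflates two different notions of critical point: $J$ measures degeneracy of the chosen algebraic \emph{presentation} of $X$, not of the smooth map $f$ restricted to the underlying set. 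In the example, $f : X \to \mbb{R}$ is a diffeomorphism with no critical points in the sense relevant to Sard's theorem, yet $J \equiv 0$ because the presentation $y^2 = 0$ is non-reduced. Passing to the radical ideal would fix this example, but the lemma as written permits arbitrary polynomial $F_i$; a correct version needs an explicit nondegeneracy hypothesis such as $J \not\equiv 0$ on each top-dimensional component of the reduced real locus, or a reduction to radical generating sets, neither of which is in the statement you were given.
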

\begin{proof}
Let $Z=\{(x_1,x_2,\ldots,x_n,y_1,y_2,\ldots,y_m)\mid
det\big((\frac{\partial F_i}{\partial y_j})^{m,m}_{i=1,j=1}\big)\}
\neq 0$. Let $(x^0_1,x^0_2,\ldots,x^0_n,y^0_1,y^0_2,\ldots,y^0_m)
\in Z$ then there exists open sets $V \subs \mbb{R}^n,O \subs X$ and
map $\gf: V \lra \mbb{R}^m$ such that
$F_i\big(x_1,x_2,\ldots,x_n,(y_1,y_2,\ldots,y_m)=\gf(x_1,x_2,\ldots,x_n)\big)=0$
for all $(x_1,x_2,\ldots,x_n)\in V$ for all $1 \leq i \leq m$  and
$graph(\gf) = O\subs X$ by implicit function theorem. So
$O=\{(x_1,x_2,\ldots,x_n,y_1,y_2,\ldots,y_m) \mid
(x_1,x_2,\ldots,x_n) \in V,(y_1,
y_2,\ldots,y_m)=\gf(x_1,x_2,\ldots,x_n)\}$. Now take the space
$F_{(x^0_1,x^0_2,\ldots,x^0_n,y^0_1,y^0_2,\ldots,y^0_m)}=U=\{a\}$ to
be a singleton topological space and define a map $\psi:O \cong V
\times U$ as $\psi: (x_1,x_2,\ldots,x_n,y_1,y_2,\ldots,y_m)
=((x_1,x_2,\ldots,x_n),a)$. Then $\psi$ is a homeomorphism. We see
that the following diagram commutes. \equ{\big(O \us{f}{\lra} V\big)
= \big(O \us{\psi}{\lra} V \times U \us{\gp_1}{\lra} V\big)} This
proves the lemma.
\end{proof}
\begin{lemma}[Second Application: Density]
\label{lemma:SecondDensityLemma}
Let $X=\{(x_1,x_2,\ldots,x_n,y_1,y_2,$
 $\ldots,y_m) \in \mbb{R}^{n+m} \mid F_i[x_1,x_2,\ldots,x_n,$ $y_1,y_2,\ldots,y_m]=0$ for $1 \leq i \leq m$
where  $F_i$ is a polynomial function $\}.$ Suppose
$f:X \lra \mbb{R}^n$ given by
$(x_1,x_2,\ldots,x_n,y_1,y_2,\ldots,y_m)$
$\lra (x_1,x_2,\ldots,x_n)$ is a surjective map. Let $B \subs \mbb{R}^{n}$ be a dense set. Then
$A=f^{-1}(B)$ is dense in $X$.
\end{lemma}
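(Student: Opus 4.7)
The plan is to combine Lemma~\ref{lemma:LocalProductStructureLemma} with Lemma~\ref{lemma:SAELPSDS}, in exact parallel with the derivation of Lemma~\ref{lemma:FirstDensityLemma}. By Lemma~\ref{lemma:SAELPSDS}, at each point of the set
\[
Z = \bigl\{(x_1,\ldots,x_n,y_1,\ldots,y_m) \in X \;\bigm|\; \det\bigl((\partial F_i/\partial y_j)^{m,m}_{i,j=1}\bigr) \neq 0 \bigr\}
\]
there is a local product structure neighborhood $O \cong V \times \{a\}$ on which $f$ corresponds to the projection onto $V \subs \mbb{R}^n$; the one-point factor $\{a\}$ appears because the implicit function theorem expresses each $y_j$ locally as a smooth function of $(x_1,\ldots,x_n)$.

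The next step is to show that $Z$ is dense in $X$. Since $X \setminus Z$ is the zero locus on $X$ of the polynomial $D := \det\bigl((\partial F_i/\partial y_j)\bigr)$, it suffices to check that $D$ does not vanish identically on any irreducible component $X_0 \subs X$ whose image under $f$ is all of $\mbb{R}^n$. If $D$ did vanish identically on such a component, then every point of the smooth locus of $X_0$ would be a critical point of $f|_{X_0}$, and Sard's theorem would force $f(X_0)$ to have measure zero in $\mbb{R}^n$, contradicting the surjectivity of $f$. Hence $Z$ meets every component of $X$ that projects onto $\mbb{R}^n$, and being the non-vanishing locus of a single polynomial restricted to $X$, it is dense in $X$ in the Euclidean topology.

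With the density of $Z$ established, the conclusion follows at once from Lemma~\ref{lemma:LocalProductStructureLemma}: since $B \subs \mbb{R}^n$ is dense and $X$ has the local product structure property with respect to $f$ on the dense subset $Z$, the lemma yields $\ol{A} = \ol{f^{-1}(B)} = X$. The main obstacle is the density-of-$Z$ step, which requires ruling out the degenerate possibility that the critical locus of $f$ exhausts an entire component of $X$ projecting onto $\mbb{R}^n$; this is precisely where the surjectivity hypothesis and the polynomial structure of the $F_i$ are essential, allowing one to invoke Sard's theorem (or an equivalent transversality argument for semi-algebraic maps).
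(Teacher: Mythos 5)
Your overall strategy — combine Lemma~\ref{lemma:LocalProductStructureLemma} with Lemma~\ref{lemma:SAELPSDS} — matches the paper's one-line proof, and you correctly notice that the paper never actually establishes what it needs, namely that the set $Z$ on which Lemma~\ref{lemma:SAELPSDS} supplies a local product structure (the locus where $\det\big((\partial F_i/\partial y_j)\big)\neq 0$) is dense in $X$. Unfortunately your Sard argument does not close this gap. The step ``if $D=\det\big((\partial F_i/\partial y_j)\big)$ vanishes identically on a component $X_0$, then every smooth point of $X_0$ is a critical point of $f|_{X_0}$'' is incorrect: $D$ governs whether the implicit function theorem lets you solve for the $y_j$'s in terms of the $x_i$'s, not whether $f|_{X_0}$ is a submersion. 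For instance, with $n=m=1$ and $F_1(x,y)=y^2$ one has $X=\{y=0\}$, $D=2y\equiv 0$ on all of $X$, yet $f|_X\colon(x,0)\mapsto x$ is a diffeomorphism with no critical points whatsoever. Moreover, the conclusion you then draw — that because $Z$ meets every component of $X$ mapping onto all of $\mbb{R}^n$ it must be dense in $X$ — does not follow either, since a component whose image under $f$ is a proper subset of $\mbb{R}^n$ can be entirely disjoint from $Z$.

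In fact the lemma as stated is false, so no patch to the density-of-$Z$ step can succeed without strengthening the hypotheses. Take $n=m=1$ and $F_1(x,y)=xy$, so that $X$ is the union of the two coordinate axes in $\mbb{R}^2$ and $f(x,y)=x$ is surjective (the $x$-axis alone already maps onto $\mbb{R}$). Let $B=\mbb{Q}+\sqrt 2$, which is dense in $\mbb{R}$ and does not contain $0$. Then $A=f^{-1}(B)=\{(x,0):x\in B\}$ is dense in the $x$-axis but disjoint from the $y$-axis, hence $\ol A\neq X$. Here $Z=\{(x,0):x\neq 0\}$ is visibly not dense in $X$, and this is consistent with your Sard reasoning only because the $y$-axis component does not map onto $\mbb{R}$. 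Some additional hypothesis — for instance irreducibility of $X$, or a requirement that $f$ be an open map, or a local homeomorphism on a dense subset — is needed both for the lemma to be true and for a density argument for $Z$ to have any chance of working. As written, both your proof and the paper's assertion have this same gap.
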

\begin{proof}
Using the lemma~\ref{lemma:LocalProductStructureLemma} and the previous lemma~\ref{lemma:SAELPSDS} this lemma follows.
\end{proof}
\begin{lemma}
\label{lemma:RepeatedPrinciple}
Let $X_0 \us{f_1}{\lra} X_1 \us{f_2}{\lra} X_2 \us{f_3}{\lra} \ldots \us{f_n}{\lra} X_n$ be a sequence of surjective continuous maps of topological spaces such that the local product structure property is satisfied on a dense set $Z_i$ in $X_i$ with respect to
the map $f_{i+1}$ for $i=0,\ldots,n-1$. Then if $B \subs X_n$ is
dense then the preimage of $B$ in each $X_i$ is dense in $X_i$ for
all $0 \leq i \leq n-1$.
\end{lemma}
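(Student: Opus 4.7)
My plan is to prove this by reverse induction on $i$, which amounts to an $n$-fold iteration of the Local Product Structure Lemma (Lemma~\ref{lemma:LocalProductStructureLemma}). No new idea is required; the entire content is bookkeeping: the single-step lemma transports density one stage backwards, and a chain of composed maps just asks us to invoke it repeatedly.

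To set up notation I would define, for each $0 \leq i \leq n$, the composed surjective continuous map
\equ{g_i = f_n \circ f_{n-1} \circ \cdots \circ f_{i+1} : X_i \lra X_n,}
with the convention $g_n = \id{X_n}$. Since preimage commutes with composition, the preimage of $B$ in $X_i$ is exactly $g_i^{-1}(B)$, and we have the recursion
\equ{g_i^{-1}(B) = f_{i+1}^{-1}\bigl(g_{i+1}^{-1}(B)\bigr) \qquad (0 \leq i \leq n-1).}
The claim to prove then reads: $g_i^{-1}(B)$ is dense in $X_i$ for all $0 \leq i \leq n-1$.

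I would proceed by downward induction on $i$, starting from $i = n$. The base case $g_n^{-1}(B) = B$ being dense in $X_n$ is the hypothesis. For the inductive step, assume $g_{i+1}^{-1}(B)$ is dense in $X_{i+1}$. By hypothesis the map $f_{i+1}: X_i \lra X_{i+1}$ is a surjective continuous map and $X_i$ has the local product structure property with respect to $f_{i+1}$ on the dense subset $Z_i \subs X_i$. Applying Lemma~\ref{lemma:LocalProductStructureLemma} with the dense subset $g_{i+1}^{-1}(B) \subs X_{i+1}$ playing the role of ``$B$'' in that lemma, I conclude that $f_{i+1}^{-1}\bigl(g_{i+1}^{-1}(B)\bigr) = g_i^{-1}(B)$ is dense in $X_i$. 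This completes the induction and the proof.

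There is no real obstacle here: the only thing to be careful about is matching the data of Lemma~\ref{lemma:LocalProductStructureLemma} correctly at each stage (the map is $f_{i+1}$, the dense set witnessing local product structure is $Z_i$, and the dense target set is the previously established $g_{i+1}^{-1}(B)$). Since Lemma~\ref{lemma:LocalProductStructureLemma} is applied without modification, the work is purely in the iterative bookkeeping described above.
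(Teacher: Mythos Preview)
Your proof is correct and is exactly the approach the paper takes: the paper's own proof consists of the single sentence ``By a repeated application of the same principle this lemma follows,'' where the principle is Lemma~\ref{lemma:LocalProductStructureLemma}. You have simply made the downward induction explicit, which is precisely what that sentence is asking the reader to supply.
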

\begin{proof}
By a repeated application of the same principle this lemma follows.
\end{proof}
Now we prove Theorem~\ref{Theorem:DensityFibrations}.
\begin{proof}
Using the previous lemma~\ref{lemma:RepeatedPrinciple} and the observation that in the
closure of fibrewise dense set the entire fibre is there and hence upon its
closure we get the whole space.
\end{proof}
\section{Acknowledgments}
I would like to thank Prof. C.R. Pranesachar, Indian Institute of
Science, Bangalore, Prof. Jaya Iyer, The Insititute of Mathematical
Sciences, Chennai and Prof. B. Sury, Indian Statistical Institute,
Bangalore for their motivation, suggestions of revisions during the
writing of the document. I would like to dedicate this article to my
sister C.P. Aparna and my mother C.P. Satyavathi.
\renewcommand{\bibname}{}


\begin{thebibliography}{0}


\bibitem{Almering}
J.~H.~J. Almering.
\newblock {Rational Quadrilaterals}.
\newblock {\em Indagationes Mathematicae}, 25:192--199, 1963.

\bibitem{AnningErdos}
N.~H. Anning and P.~Erdos.
\newblock {Integral Distances}.
\newblock {\em Bulletin of American Mathematical Society}, 51:598--600, 1945.

\bibitem{Berry}
T.~G. Berry.
\newblock {Points at a Rational Distance from the Vertices of a Triangle}.
\newblock {\em Acta Arithmetica}, LXII(4), 1992.

\bibitem{Besicovitch}
A.~S. Besicovitch.
\newblock {Rational Polygons}.
\newblock {\em Mathematika}, 6:98, 1959.

\bibitem{Blichfeldt}
H.~F. Blichfeldt.
\newblock {On Triangles with Rational Sides and Having Rational Areas}.
\newblock {\em Annals of Mathematics}, Volume 11(1/6):57--60, 1896-1897.

\bibitem{Daykin}
D.~E. Daykin.
\newblock {Rational Polygons}.
\newblock {\em Mathematika}, 10:125--131, 1963.

\bibitem{Daykin2}
D.~E. Daykin.
\newblock {Rational Triangles and Parallelograms}.
\newblock {\em Mathematics Magazine}, 38:46--47, 1965.

\bibitem{Dickson}
L.~E. Dickson.
\newblock {Rational Triangles and Quadrilaterals}.
\newblock {\em The American Mathematical Monthly}, 28(6/7):244--250, June-July
  1921.

\bibitem{Euler}
L.~Euler.
\newblock {\em Commentationes Arithmeticae}, Volume I:24--25.

\bibitem{Euler2}
L.~Euler.
\newblock {\em Commentationes Arithmeticae}, Volume II:648.

\bibitem{Hartshorne}
R.~Hartshorne.
\newblock {\em {Algebraic Geometry}}.
\newblock {Graduate Texts in Mathematics: 52}. Springer International.

\bibitem{HumkeLawrence}
P.~D. Humke and L.~L. Krajewski.
\newblock {A Characterization of Circles Which Contain Rational Points}.
\newblock {\em The American Mathematical Monthly}, 86(4):287--290, April 1979.

\bibitem{Husemoller}
D.~Husemoller.
\newblock {\em {Elliptic Curves}}.
\newblock {Graduate Texts in Mathematics: 111}. Springer International.

\bibitem{Kummer}
E.~E. Kummer.
\newblock {Ueber die Vierecke deren Seiten und Diagonalen rational sind}.
\newblock {\em Crelle}, 37:1--20.

\bibitem{Lehmer}
D.~N. Lehmer.
\newblock {Rational Triangles and Quadrilaterals}.
\newblock {\em Annals of Mathematics}, 1(1/4):97--102, 1899-1900.

\bibitem{Mordell}
L.~J. Mordell.
\newblock {Rational Quadrilaterals}.
\newblock {\em Journal of the London Mathematical Society}, 35:277--282, 1960.

\bibitem{Shafarevich}
I.~R. Shafarevich.
\newblock {\em {Basic Algebraic Geometry Volume I: Varieties in Projective
  Space}}.
\newblock Springer Verlag.

\bibitem{Sheng}
T.~K. Sheng.
\newblock {Rational Polygons}.
\newblock {\em Journal of the Australian Mathematical Society}, 6:452--459,
  1966.
\end{thebibliography}
\end{document}